\documentclass[12pt]{article}

\bibliographystyle{abbrv}

\usepackage{verbatim}

\usepackage{amsfonts}

\usepackage{amsthm}

\usepackage{amssymb}

\usepackage{amsmath}

\usepackage{mathabx}

\usepackage{enumerate}

\usepackage[all]{xy}

\usepackage{graphicx}

\usepackage{hyperref}

\newcommand{\N}{\mathbb{N}}

\newcommand{\R}{\mathbb{R}}

\newcommand{\C}{\mathbb{C}}

\newcommand{\K}{\mathcal{K}}

\newcommand{\Hawaii}{Hawai\kern.05em`\kern.05em\relax i}

\newcommand{\ine}{\in_\epsilon}

\newcommand{\ind}{\in_\delta}

\newcommand{\bt}{\beta^{-1}}

\theoremstyle{plain}
\newtheorem{theorem}{Theorem}[section]
\newtheorem{lemma}[theorem]{Lemma}
\newtheorem{corollary}[theorem]{Corollary}
\newtheorem{proposition}[theorem]{Proposition}

\newtheorem{definition-theorem}[theorem]{Definition / Theorem}


\newtheorem*{conjecture*}{Conjecture}
\newtheorem*{theorem*}{Theorem}

\theoremstyle{definition}
\newtheorem{definition}[theorem]{Definition}
\newtheorem{example}[theorem]{Example}

\theoremstyle{remark}
\newtheorem{remark}[theorem]{Remark}


\newtheorem*{example*}{Example}  
\newtheorem*{remark*}{Remark}

\begin{document}
\title{Approximate ideal structures and K-theory}
\author{Rufus Willett}

\maketitle

\begin{abstract}
We introduce a notion of approximate ideal structure for a $C^*$-algebra, and use it as a tool to study $K$-theory groups.  The notion is motivated by the classical Mayer-Vietoris sequence, by the theory of nuclear dimension as introduced by Winter and Zacharias, and by the theory of dynamical complexity introduced by Guentner, Yu, and the author.   A major inspiration for our methods comes from recent work of Oyono-Oyono and Yu in the setting of controlled $K$-theory of filtered C*-algebras; we do not, however, use that language in this paper. 

We give two main applications.  The first is a vanishing result for $K$-theory that is relevant to the Baum-Connes conjecture.  The second is a permanence result for the K\"{u}nneth formula in $C^*$-algebra $K$-theory: roughly, this says that if $A$ can be decomposed into a pair of subalgebras $(C,D)$ such that $C$, $D$, and $C\cap D$ all satisfy the K\"{u}nneth formula, then $A$ itself satisfies the K\"{u}nneth formula.  

\end{abstract}

\tableofcontents

\section{Introduction}

\subsection*{Approximate ideals structures and long exact sequences}

Let $C$ and $D$ be $C^*$-subalgebras of a $C^*$-algebra $A$.  There is a natural sequence of maps
\begin{equation}\label{general les}
K_1(C\cap D) \stackrel{\iota}{\to} K_1(C)\oplus K_1(D) \stackrel{\sigma}{\to} K_1(A) \stackrel{\partial}{\dashrightarrow} K_0(C\cap D) \stackrel{\iota}{\to} K_0(C)\oplus K_0(D) 
\end{equation}
of $K$-theory groups where the solid arrows labeled $\iota$ and $\sigma$ are defined respectively by $\iota(\kappa):=(\kappa,-\kappa)$ and $\sigma(\kappa,\lambda):=\kappa+\lambda$.
The dashed arrow labeled $\partial$ does not exist in general, but in the very special case that $C$ and $D$ are ideals in $A$ such that $A=C+D$, one can canonically fill it in.  Indeed, the dashed arrow is then a boundary map in a six-term exact sequence
$$
\xymatrix{ K_1(C\cap D) \ar[r]^-\iota & K_0(C)\oplus K_0(D) \ar[r]^-\sigma & K_0(A) \ar[d]^-\partial \\
K_1(A) \ar[u]^-\partial & K_1(C)\oplus K_1(D) \ar[l]^-\sigma & K_1(C\cap D) \ar[l]^-\iota }.
$$
This is the $C^*$-algebraic analogue of the classical Mayer-Vietoris sequence associated to a cover of a topological space by two open sets.

The main technical tools developed in this paper are partial exactness results for the sequence in line \eqref{general les} that hold under less rigid assumptions than $C$ and $D$ being ideals.  These tools have interesting consequences even for many simple $C^*$-algebras, where there are no non-trivial ideals.  Looking at the diagram in line \eqref{general les} in more detail,
\begin{equation}\label{pre mv}
K_1(C\cap D) \stackrel{\iota}{\to} \underbrace{K_1(C)\oplus K_1(D)}_{(III)} \stackrel{\sigma}{\to} \underbrace{K_1(A)}_{(II)} \stackrel{\partial}{\dashrightarrow} \underbrace{K_0(C\cap D)}_{(I)} \stackrel{\iota}{\to} K_0(C)\oplus K_0(D) 
\end{equation}
we establish partial exactness results at each of the three places marked (I), (II), and (III), under progressively more stringent assumptions.  Exactness at point (I) is the easiest to prove, and is automatic: if $\iota(\kappa)=0$ for some $\kappa\in K_0(C\cap D)$, one can always canonically construct a class in $K_1(A)$ that is the `reason' for its being zero in some sense.

For exactness in the positions marked (II) and (III) in line \eqref{pre mv}, we need more assumptions.  Here are the technical definitions.

\begin{definition}\label{intro decomp}
Let $A$ be a $C^*$-algebra, and let $\mathcal{C}$ be a set of pairs $(C,D)$ of $C^*$-subalgebras of $A$.  Then $A$ admits an \emph{approximate ideal structure} over $\mathcal{C}$ if for any $\delta>0$ and any finite subset $\mathcal{F}$ of $A$ there exists a positive contraction $h$ in the multiplier algebra of $A$ and a pair $(C,D)\in \mathcal{C}$ such that:
\begin{enumerate}[(i)]
\item \label{dec com} $\|[h,a]\|< \delta$ for all $a\in \mathcal{F}$;
\item \label{dec split} $d(ha,C)<\delta$ and $d((1-h)a,D)< \delta $ for all $a\in \mathcal{F}$;
\item \label{dec split 2} $d((1-h)ha,C\cap D)<\delta$ and $d((1-h)h^2a,C\cap D)<\delta$ for all $a\in \mathcal{F}$.
\end{enumerate}
\end{definition} 
The pair $\{h,1-h\}$ should be thought of as a `partition of unity' on $A$, splitting it into two `parts' $C$ and $D$ that are simpler than the original.  We discuss examples below, but keep the discussion on an abstract level for now.

These conditions allow us to prove a version of exactness at position (II) in line \eqref{pre mv}: roughly this says that if $A$ admits an approximate ideal structure over $\mathcal{C}$, then for any class $[u]$ in $K_1(A)$ one can find a pair $(C,D)\in \mathcal{C}$ and build a class $\partial(u)\in K_0(C\cap D)$ such that if $\partial(u)=0$, then $[u]$ is in the image of $\sigma$.  

The first of our main results is as follows. 

\begin{theorem}\label{pre main}
Say that $A$ admits an approximate ideal structure over a set $\mathcal{C}$ such that for all $(C,D)\in \mathcal{C}$, the $C*$-algebras $C$, $D$, and $C\cap D$ have trivial $K$-theory.  Then $A$ has trivial $K$-theory.  
\end{theorem}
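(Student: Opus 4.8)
\section*{Proof proposal}

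The plan is to deduce the theorem formally from the exactness statement at position (II) discussed above, together with a suspension argument that reduces the vanishing of $K_0(A)$ to that of $K_1$.

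First I would reduce to a statement about $K_1$ alone. Since $K_0(A)\cong K_1(SA)$ for $SA:=C_0(\R)\otimes A\cong C_0(\R,A)$, it is enough to show that $K_1(A)=0$ and $K_1(SA)=0$. For the second, I would check that $SA$ admits an approximate ideal structure over $\mathcal{C}_S:=\{(SC,SD):(C,D)\in\mathcal{C}\}$. Given $\delta>0$ and a finite $\mathcal{F}\subseteq SA$, one approximates the elements of $\mathcal{F}$ by finite sums of elementary tensors, collects the finitely many $A$-valued tensor factors that appear into a finite set $\mathcal{F}_0\subseteq A$, applies the approximate ideal structure of $A$ to $(\mathcal{F}_0,\delta')$ for a suitably small $\delta'$, and takes the constant multiplier function $H\equiv h$ in $M(SA)$ together with the pair $(SC,SD)$; the three conditions of Definition~\ref{intro decomp} for $(SA,H,SC,SD)$ then follow from those for $(A,h,C,D)$ by a direct norm estimate. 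One also needs $SC$, $SD$, and $SC\cap SD=S(C\cap D)$ to have trivial $K$-theory, which holds because $K_*(SB)\cong K_{*+1}(B)$ and $C$, $D$, $C\cap D$ do. Thus it suffices to prove: if $B$ admits an approximate ideal structure over a set $\mathcal{C}'$ of pairs $(C,D)$ with $C$, $D$, and $C\cap D$ all having trivial $K$-theory, then $K_1(B)=0$.

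To prove this reduced statement, take $[u]\in K_1(B)$. By the exactness statement at position (II) there are a pair $(C,D)\in\mathcal{C}'$ and a class $\partial(u)\in K_0(C\cap D)$ such that $\partial(u)=0$ implies $[u]$ lies in the image of $\sigma\colon K_1(C)\oplus K_1(D)\to K_1(B)$. Since $K_0(C\cap D)=0$, the class $\partial(u)$ is automatically zero, so $[u]$ lies in the image of $\sigma$; and since $K_1(C)=K_1(D)=0$, that image is $\{0\}$. Hence $[u]=0$, so $K_1(B)=0$. Applying this with $B=A$ and $B=SA$ then completes the proof.

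The real work is hidden in the exactness result at position (II), which I am treating here as a black box; granting it, the argument is purely formal, and in particular uses neither exactness at (I) or (III) nor any hypothesis stronger than the bare existence of an approximate ideal structure, consistent with the theorem's assumptions. The one genuinely new thing to supply is the suspension step, and that is the point where I would expect to have to be a little careful: verifying that approximate ideal structures descend to suspensions is routine, but the norm estimates require keeping track of the constants that appear when an element of $SA$ is written as a sum of elementary tensors, so $\delta'$ must be chosen small relative to those.
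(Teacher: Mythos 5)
Your proposal is correct and takes essentially the same approach as the paper: reduce to $K_1(A)=K_1(SA)=0$, use the boundary class (exactness at position (II), i.e.\ Propositions~\ref{v lem} and \ref{bound lem}) to conclude each $K_1$ vanishes, and verify that $SA$ admits an approximate ideal structure over $\{(SC,SD)\}$. The only cosmetic difference is that the paper packages your ad hoc suspension estimate as Corollary~\ref{tp cor} (via Lemma~\ref{exc tens}), applied with $B=C_0(\R)$.
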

This result is already quite powerful: for example, it allows one to reprove the main theorem on the Baum-Connes conjecture of Guentner, Yu, and the author from \cite{Guentner:2014bh} without the need for the controlled $K$-theory methods used there.  

In order to get our results on the K\"{u}nneth formula, we need an exactness property at position (III) in line \eqref{pre mv}; unfortunately, this needs the stronger assumption on $A$ defined below.  

\begin{definition}\label{intro exc}
Let $A$ be a $C^*$-algebra and $\mathcal{C}$ a set of pairs $(C,D)$ of $C^*$-subalgebras of $A$.  Then $A$ admits a \emph{uniform approximate ideal structure over $\mathcal{C}$} if it admits an approximate ideal structure over $\mathcal{C}$, and if in addition the following property holds.  For all $\epsilon>0$ there exists $\delta>0$ such that for any $C^*$-algebra $B$, if $c\in C\otimes B$ and $d\in D\otimes B$ satisfy $\|c-d\|<\delta$, then there exists $x\in (C\cap D)\otimes B$ with $\|x-c\|<\epsilon$ and $\|x-d\|<\epsilon$.  
\end{definition}

The above definition is satisfied, for example, if all the pairs $(C,D)\in \mathcal{C}$ are pairs of ideals.  However, this is too much to ask if one wants  applications that go beyond well-understood cases.  There are non-trivial examples, but we will discuss these until later.

Here is our second main theorem.

\begin{theorem}\label{main}
Let $A$ be a $C^*$-algebra.  Assume that $A$ admits a uniform approximate ideal structure over $\mathcal{C}$, and that for each $(C,D)\in \mathcal{C}$, $C$, $D$, and $C\cap D$ satisfy the K\"{u}nneth formula.  Then $A$ satisfies the K\"{u}nneth formula.
\end{theorem}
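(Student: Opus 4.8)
The plan is to reduce Theorem~\ref{main} to Theorem~\ref{pre main} by way of the standard reformulation of the Künneth formula as a vanishing statement: a $C^*$-algebra $A$ satisfies the Künneth formula if and only if $K_*(A\otimes Z)=0$ for every $C^*$-algebra $Z$ with $K_*(Z)=0$. The ``only if'' direction is immediate from the Künneth short exact sequence. For ``if'', one resolves $Z$ geometrically: choose a $C^*$-algebra $Z_0$ that is a direct sum of copies of $\C$ and of $C_0(\R)$ with $K_*(Z_0)\cong K_*(Z)$, realise the isomorphism by a $KK$-class --- equivalently, after stabilising, a $*$-homomorphism $Z_0\to Z$ --- and pass to the (semisplit) mapping cone to get a $C^*$-algebra $Z_1$ with free $K$-theory fitting into a six-term exact sequence with $Z_0$ and $Z$; tensoring with $A$ (which preserves semisplit exactness) and chasing the resulting six-term sequence yields the Künneth sequence for $(A,B)$ for an arbitrary $B$, the only non-formal input being that $K_*(A\otimes W)=K_*(A)\otimes K_*(W)$ when $K_*(W)$ is free, which itself follows from the vanishing hypothesis applied to a mapping cone comparing $W$ with its free model. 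So it suffices to prove: if $K_*(Z)=0$, then $K_*(A\otimes Z)=0$.

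Fix such a $Z$. The key claim is that $A\otimes Z$ admits an approximate ideal structure over $\mathcal{C}_Z:=\{(C\otimes Z,\,D\otimes Z):(C,D)\in\mathcal{C}\}$ and that for each such pair the algebras $C\otimes Z$, $D\otimes Z$ and $(C\otimes Z)\cap(D\otimes Z)$ have trivial $K$-theory; Theorem~\ref{pre main} then gives $K_*(A\otimes Z)=0$. For the approximate ideal structure: given a finite $\mathcal{G}\subseteq A\otimes Z$ and $\delta>0$, approximate the elements of $\mathcal{G}$ by finite sums of elementary tensors to reduce to a finite set $\mathcal{F}\subseteq A$ (with a slightly smaller tolerance), apply the hypothesis on $A$ to obtain a positive contraction $h$ in the multiplier algebra of $A$ and a pair $(C,D)\in\mathcal{C}$, and take $h\otimes 1$, a positive contraction in the multiplier algebra of $A\otimes Z$. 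Conditions (i)--(iii) of Definition~\ref{intro decomp} for $h\otimes 1$ follow from those for $h$, since $[h\otimes 1,a\otimes z]=[h,a]\otimes z$ and $d\bigl((h\otimes 1)(a\otimes z),C\otimes Z\bigr)\le\|z\|\,d(ha,C)$, and likewise for the remaining distances, with $(C\cap D)\otimes Z$ serving as $(C\otimes Z)\cap(D\otimes Z)$. For triviality of $K$-theory: since $C$, $D$ and $C\cap D$ satisfy the Künneth formula and $K_*(Z)=0$, the Künneth sequence forces $K_*(C\otimes Z)=K_*(D\otimes Z)=K_*((C\cap D)\otimes Z)=0$.

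Two points need care. First, one must verify $(C\otimes Z)\cap(D\otimes Z)=(C\cap D)\otimes Z$ inside $A\otimes Z$ for the spatial tensor product: the inclusion $\supseteq$ is clear, and $\subseteq$ follows by applying the slice maps $\mathrm{id}_A\otimes\varphi$, $\varphi\in Z^*$, which carry $C\otimes Z$ into $C$ and $D\otimes Z$ into $D$, hence the intersection into $C\cap D$, together with the slice-map property of the minimal tensor product for the subalgebra $C\cap D$. Second, this is where the \emph{uniform} hypothesis enters: if one wants $A\otimes Z$ to carry a \emph{uniform} approximate ideal structure over $\mathcal{C}_Z$ --- as is needed if, in place of the reduction above, one runs a Mayer--Vietoris argument directly with $A\otimes Z$ (or a general coefficient algebra) in the role of $A$ --- then for each $\epsilon$ one needs a single $\delta$ valid for \emph{every} auxiliary $C^*$-algebra $Z'$ at once, and since $(C\otimes Z)\otimes Z'=C\otimes(Z\otimes Z')$ this is exactly the defining property of a uniform approximate ideal structure for $A$ applied with $Z\otimes Z'$ in place of $B$. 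This is precisely why Definition~\ref{intro exc} quantifies over all $C^*$-algebras, and the reason the stronger hypothesis is imposed.

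I expect the main obstacle to be twofold. One is the identification $(C\otimes Z)\cap(D\otimes Z)=(C\cap D)\otimes Z$ and the bookkeeping needed to transfer the (uniform) approximate ideal structure to $A\otimes Z$. The more serious one is whether the vanishing reformulation of the Künneth formula is available in the full generality required, as its proof rests on geometric resolutions and $KK$-theoretic input that are cleanest for separable $C^*$-algebras; if it is not, the alternative is to establish a genuine Mayer--Vietoris six-term exact sequence relating $K_*(A\otimes B)$ to $K_*(C\otimes B)\oplus K_*(D\otimes B)$ and $K_*((C\cap D)\otimes B)$ for an arbitrary coefficient algebra $B$, using exactness at positions (I), (II) and (III) --- the last of which genuinely requires uniformity --- and then to deduce the Künneth formula for $A$ from those for $C$, $D$ and $C\cap D$ by naturality of the Künneth maps and a two-out-of-three (five-lemma) argument for the associated Künneth complexes. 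That Mayer--Vietoris step, built on the technical machinery developed in this paper, is where I would expect the bulk of the work to lie.
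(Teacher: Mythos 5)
Your proposal takes a genuinely different and much softer route than the paper's: you reduce Theorem \ref{main} to the vanishing Theorem \ref{pre main} via the equivalence ``$A$ satisfies K\"{u}nneth $\iff$ $K_*(A\otimes Z)=0$ for all $Z$ with $K_*(Z)=0$.'' The paper does not argue this way. It proves surjectivity (Theorem \ref{kun surj}) and injectivity (Theorem \ref{kun inj}) of the product map $K_*(A)\otimes K_*(B)\to K_*(A\otimes B)$ for a fixed coefficient $B$ with free $K$-theory, by running the boundary-class machinery of Sections \ref{mo bound sec}--\ref{bott sec} with $B$-coefficients carried along throughout: the exactness statements at positions (I), (II), and (III) of line \eqref{pre mv}, the compatibility of $\partial_v$ with the product (Lemma \ref{bound com}) and with the asymptotic Bott map (Lemmas \ref{bott bound} and \ref{bott lem}), and the hard Proposition \ref{sigma lem}. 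No resolutions, no $KK$-input.

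There are two genuine gaps in what you wrote. The first you flag yourself but underestimate. The paper's \emph{definition} of the K\"{u}nneth formula is ``product map is an isomorphism for every $B$ with free $K_*(B)$''; it is not the short exact sequence, and it is not the vanishing statement. You use both forms for $C$, $D$, and $C\cap D$ (``the K\"{u}nneth sequence forces $K_*(C\otimes Z)=0$''), so you need the equivalence of formulations on the \emph{hypothesis} side as well as on the conclusion side. That equivalence is exactly the Schochet-style passage through geometric resolutions, and it is not elementary: an arbitrary class in $K_0(B)$ need not be $[p]$ for a projection $p\in B\otimes\K$, so one is forced into quasi-homomorphisms or $KK$-classes, and the construction needs at least separability, which Theorem \ref{main} does not assume. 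The author explicitly says he ``tried and failed to find softer, more conceptual arguments''; this reduction is precisely the kind of thing that quietly imports non-elementary machinery.

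The second gap is the intersection identity, and this is where you misdiagnose the role of uniformity. You assert $(C\otimes Z)\cap(D\otimes Z)=(C\cap D)\otimes Z$ via slice maps, appealing to ``the slice-map property of the minimal tensor product for the subalgebra $C\cap D$.'' That is exactly the property that can \emph{fail}: the slice maps only place the intersection inside the Fubini space $\{z\in A\otimes Z:(\mathrm{id}\otimes\varphi)(z)\in C\cap D\ \text{for all}\ \varphi\in Z^*\}$, which can strictly contain $(C\cap D)\otimes Z$ (the paper points to \cite{Kye:1984aa} precisely for this). The identity is \emph{proved} in the paper from $f$-uniformity as Lemma \ref{int prop}. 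So uniformity is needed right here in your own argument --- without it you cannot identify the intersection, cannot conclude $K_*\bigl((C\otimes Z)\cap(D\otimes Z)\bigr)=0$, and Theorem \ref{pre main} does not apply --- and not only in the hypothetical coefficient Mayer--Vietoris argument you mention. (The paper's deeper use of uniformity, namely Proposition \ref{sigma lem}, would indeed be bypassed by your reduction, but only at the cost of the non-elementary reformulation above.)
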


Before moving on to examples, let us digress slightly to give background on the K\"{u}nneth formula for readers unfamiliar with this.

\subsection*{The K\"{u}nneth formula}

One of the main results in this paper is about the K\"{u}nneth formula, which concerns the external product map
$$
\times:K_*(A\otimes B)\to K_*(A)\otimes K_*(B)
$$
in $C^*$-algebra $K$-theory.  This product is as a special case of the very general Kasparov product, but can also be defined in an elementary way: see for example \cite[Section 4.7]{Higson:2000bs}.   A $C^*$-algebra $A$ is said to satisfy the \emph{K\"{u}nneth formula} if for any $C^*$-algebra $B$ with free abelian $K$-groups, the product map above is an isomorphism.  

Study of the K\"{u}nneth formula seems to have been initiated by Atiyah \cite{Atiyah:1962aa} in the commutative case, and in general by Schochet \cite{Schochet:1982aa}.  In particular, these authors showed (in the relevant contexts) that $A$ satisfies the K\"{u}nneth formula in the above sense if and only if for any $B$ there is a canonical short exact sequence
$$
0 \to K_*(A)\otimes K_*(B) \stackrel{\times}{\to} K_*(A\otimes B)\to \text{Tor}(K_*(A),K_*(B)) \to 0.
$$
This short exact sequence is a useful computational tool, so it is desirable to know for which $C^*$-algebras the K\"{u}nneth formula holds.  One can see the K\"{u}nneth formula as a sort of `dual form' of the universal coefficient theorem (UCT).  Thus another motivation for studying the K\"{u}nneth formula is as it forms a simpler proxy for the UCT.

The class of $C^*$-algebras known to satisfy the K\"{u}nneth formula is large.  Atiyah \cite{Atiyah:1962aa} essentially showed that commutative $C^*$-algebras satisfy the K\"{u}nneth formula.  It follows that any $C^*$-algebra\footnote{For this and the next paragraph, all $C^*$-algebras are separable.} that is $KK$-equivalent to a commutative $C^*$-algebra satisfies the K\"{u}nneth formula.  The class of such $C^*$-algebras is exactly the class satisfying the UCT\footnote{This is implicit in the original work of Rosenberg and Schochet \cite{Rosenberg:1987bh}, and was made explicit by Skandalis in \cite[Proposition 5.3]{Skandalis:1988rr}.}.  Hence the UCT is stronger the K\"{u}nneth formula.  

The UCT is in fact \emph{strictly} stronger that the K\"{u}nneth formula: this follows from  combining work of Chabert, Echterhoff, and Oyono-Oyono \cite{Chabert:2004fj}, of Lafforgue \cite{Lafforgue:2009ss}, and of Skandalis \cite{Skandalis:1988rr}.  Indeed, it follows from the `going down functor' machinery of \cite{Chabert:2004fj} that if $G$ is any group that satisfies the Baum-Connes conjecture with coefficients, then $C^*_r(G)$ satisfies the K\"{u}nneth formula.  Thanks to \cite{Lafforgue:2009ss}, this applies in particular when $G$ is a hyperbolic group.  On the other hand, results of \cite{Skandalis:1988rr} imply\footnote{The result as stated here is not exactly in Skandalis's paper \cite{Skandalis:1988rr}, but it follows from Skandalis's ideas, plus more recent advances in geometric group theory: see \cite[Theorem 6.2.1]{Higson:2004la} for a discussion of the version stated.} that if $G$ is an infinite, hyperbolic, property (T) group, then $C^*_r(G)$ does not satisfy the UCT.  

Other results extending the range of validity of the K\"{u}nneth formula include work of B\"{o}nicke and Dell'Aiera \cite{Bonicke:2018aa}, which extends the results of \cite{Chabert:2004fj} from groups to groupoids; and work of Oyono-Oyono and Yu \cite{Oyono-Oyono:2016qd} which uses the methods of controlled $K$-theory developed by those authors \cite{Oyono-Oyono:2011fk}, and based on older ideas of Yu \cite{Yu:1998wj}.  The work of Oyono-Oyono and Yu was the main technical inspiration for this paper, and we say more on this below.

Despite all these positive results, there are known to be $C^*$-algebras that do not satisfy the K\"{u}nneth formula.  The only way we know to produce such examples is based on the existence of non $K$-exact $C^*$-algebras: see the discussion in \cite[Remark 4.3 (1)]{Chabert:2004fj}.  We do not know of an exact $C^*$-algebra that does not satisfy the K\"{u}nneth formula.

\subsection*{Examples}

Our definitions were motivated partly by the theory of nuclear dimension.  Indeed, we can weaken Definition \ref{intro decomp} as follows.

\begin{definition}\label{weak ais}
A $C^*$-algebra $A$ admits a \emph{weak approximate ideal structure} over $\mathcal{C}$ if the conditions from Definition \ref{intro decomp} are satisfied, with condition \eqref{dec split 2} on intersections omitted. 
\end{definition}
\noindent In Appendix \ref{nd1 app}, we show\footnote{This result was pointed out to us by Wilhelm Winter.} that if $A$ is a (separable) $C^*$-algebra of nuclear dimension one, then $A$ admits a weak approximate ideal structure over a class of pairs of subhomogeneous $C^*$-subalgebras with very simple structure.  This result is not enough to deduce $K$-theoretic consequences with our current techniques; nonetheless, it provides evidence that our conditions are natural from the point of view of general $C^*$-algebra structure theory.

In Appendix \ref{fdc app}, we discuss examples coming from groupoids.  In joint work with Guentner and Yu \cite[Appendix A]{Guentner:2014bh}, we introduced a notion of a \emph{decomposition} of an \'{e}tale groupoid.  In Appendix \ref{fdc app}, we show that such decompositions naturally give rise to approximate ideal structures of the associated reduced groupoid $C^*$-algebras, and moreover that we get uniform approximate ideal structures in this way if the groupoids involved are ample.  We use this to show that a large class of reduced groupoid $C^*$-algebras satisfy the K\"{u}nneth formula\footnote{Similar results have been proved recently (and earlier than the current work) by Oyono-Oyono using the methods of controlled $K$-theory.}.

\subsection*{Inspiration and motivation}

This paper was inspired by the work of Oyono-Oyono and Yu in \cite{Oyono-Oyono:2016qd} on the K\"{u}nneth formula in controlled $K$-theory.  It owes a great deal to their work, both conceptually and in some technical details: in particular, the key idea to use a sort of approximate Mayer-Vietoris sequence comes directly from \cite{Oyono-Oyono:2016qd}, and the difficult proof of Proposition \ref{sigma lem} is based closely on their work. 
A major difference of our work from \cite{Oyono-Oyono:2016qd} in that we do not use controlled $K$-theory, only usual $K$-theory groups.  We do not use filtrations on our $C^*$-algebras, and we do not need (nor do we get results on) a `controlled' version of the K\"{u}nneth formula.  It is not clear to us what the difference is between the range of validity of our results and those of \cite{Oyono-Oyono:2016qd}; we suspect that there is a large overlap.

We were motivated largely by the theory of nuclear dimension \cite{Winter:2010eb}: we wanted to narrow the gap between the sort of structural results that one can use to deduce $K$-theoretic consequences, and the sort of structural results that are known for $C^*$-algebras of finite nuclear dimension.

\subsection*{Outline of the paper}

Section \ref{bound sec} introduces a general notion of `boundary classes', and shows that such classes have good properties with respect to the sequence of maps in line \eqref{pre mv}: roughly, we prove a weak form of exactness at position (II) in line \eqref{pre mv}.  The discussion in Section \ref{bound sec} does not give a construction of boundary classes: this is done in Section \ref{decomp bound sec} using approximate ideal structures.  We then prove Theorem \ref{pre main}, our first main goal of the paper.

In Section \ref{mo bound sec}, we prove exactness at position (I) in line \eqref{pre mv}; this is simpler than exactness at position (II), but is postponed until later as it is not needed for the proof of Theorem \ref{pre main}.  We also collect together some other technical results on the boundary map that are needed later. Exactness at position (III) in line \eqref{pre mv} is handled in Section \ref{sum sec}: this is the most difficult of our exactness properties, both to prove and to use.

Section \ref{prod sec} recalls some facts about the product in $K$-theory, and proves that the products maps interact well with our boundary classes.  Section \ref{bott sec} recalls material about the inverse Bott map that we need for the technical proofs.  We prove Theorem \ref{main} in Sections \ref{surj sec} and \ref{inj sec}, which handle the surjectivity and injectivity halves respectively.   

Finally, there are two appendices that discuss examples.  The first of these, Appendix \ref{nd1 app} shows that $C^*$-algebras of nuclear dimension one have weak approximate ideal structures.  Appendix \ref{fdc app} gives examples of (uniformly) approximate ideal structures coming from groupoid theory, and briefly discusses consequences for the Baum-Connes conjecture and K\"{u}nneth formula.

\subsection*{Notation and conventions}

Throughout, if $A$ is a $C^*$-algebra (or more generally, Banach algebra), then $\widetilde{A}$ denotes $A$ itself if $A$ is unital, and the unitization of $A$ if it is not unital.  If $X$ is a subspace of a $C^*$-algebra $A$, then $\widetilde{X}$ is the subspace of $\widetilde{A}$ spanned by $X$ and the unit.  There is an ambiguity here about what happens when $C$ is a $C^*$-subalgebra of $A$, and $C$ has its own unit which is not the unit of $A$: we adopt the convention that in this case, $\widetilde{C}$ means the $C^*$-subalgebra of $A$ generated by $C$ and the unit of $\widetilde{A}$.  This convention will always, and only, apply to $C^*$-subalgebras called $C$, $D$ and $C\cap D$ (plus suspensions and matrix algebras of these), so we hope it causes no confusion.  

We use $1_n$ and $0_n$ to denote the unit and zero element of $M_n(\widetilde{A})$ when it seems helpful to avoid ambiguity, but drop the subscripts whenever things seem more readable without.  We use the usual `top-left corner' identification of $M_n(A)$ with $M_m(A)$ for $n\leq m$, usually without comment.  We also use the usual `block sum' convention that if $a\in M_n(A)$, and $b\in M_m(A)$, then  
$$
a\oplus b:=\begin{pmatrix} a & 0 \\ 0 & b \end{pmatrix} \in M_{n+m}(A).
$$

The symbol $\otimes$ as applied to $C^*$-algebras always denotes the spatial tensor product.  If $X$ is a closed subspace of a $C^*$-algebra $A$, and $B$ is a $C^*$-algebra, then $X\otimes B$ denotes the closure of the algebraic tensor product $X\odot B$ inside $A\otimes B$.  For a $C^*$-algebra $A$, $SA:=C_0(\R)\otimes A$ is its suspension, $S^2A:=S(SA)$ its double suspension, and for a closed subspace $X$ of $A$, $SX:=C_0(\R)\otimes X$.  We always denote the compact operators on $\ell^2(\N)$ by $\mathcal{K}$, so in particular $A\otimes \mathcal{K}$ is the stabilisation of $\mathcal{K}$.

It is typical in $C^*$-algebra $K$-theory to the $K_0$ and $K_1$ groups as generated by certain equivalence classes of projections and unitaries respectively.  However, we will need to work more generally with equivalence classes of idempotents and projections.  This is because one typically has more concrete formulas available in the latter context.  Readers unfamiliar with this approach can find the necessary background in \cite[Chapters II, III and IV]{Blackadar:1998yq}, for example.

We have attempted to keep the paper self-contained and elementary, not assuming much any background beyond basic $C^*$-algebra $K$-theory\footnote{Modulo the comments above about invertibles and idempotents.}.  Although using only elementary language is often desirable in its own right, we must admit that we were also forced into it: indeed, we tried and failed to find `softer', more conceptual, arguments, and would be interested in seeing progress in that direction.

\subsection*{Acknowledgments}

This work was started during a sabbatical visit to the University of M\"{u}nster.  I would like to thank the members of the mathematics department there for their warm hospitality.  

I would like to particularly thank Cl\'{e}ment Dell'Aiera, Dominik Enders, Sabrina Gemsa, Herv\'{e} Oyono-Oyono, Ian Putnam, Aaron Tikuisis, Stuart White, Wilhelm Winter, and Guoliang Yu for numerous enlightening conversations relevant to the topics of this paper.  

The support of the US NSF through grants DMS 1564281 and DMS 1901522 is gratefully acknowledged.

\section{Boundary classes}\label{bound sec}

In this section, we work in the context of general Banach algebras.  This is not needed for our applications, but we hope it clarifies what goes into the results; it also makes no difference to the proofs. 

\begin{definition}\label{iota def}
Let $A$ be a Banach algebra, and let $C$ and $D$ be Banach subalgebras. 
We define maps on $K$-theory by
$$
\iota:K_*(C\cap D)\to K_*(C)\oplus K_*(D),\quad \kappa\mapsto (\kappa,-\kappa).
$$
and 
$$
\sigma:K_*(C)\oplus K_*(D)\to K_*(A), \quad (\kappa,\lambda)\mapsto \kappa+\lambda.
$$
\end{definition}

With notation as above, assume for a moment that $C$ and $D$ are (closed, two-sided) ideals in $A$ such that $A=C+D$.  Then there is a Mayer-Vietoris boundary map $\partial :K_1(A)\to K_0(C\cap D)$ that fits into a long exact sequence
$$
\cdots \stackrel{\iota}{\to} K_1(C)\oplus K_1(D) \stackrel{\sigma}{\to} K_1(A) \stackrel{\partial}{\to} K_0(C\cap D) \stackrel{\iota}{\to} K_0(C)\oplus K_0(D) \stackrel{\sigma}{\to} \cdots.
$$
Our aim in this section is to get analogous results for more general Banach subalgebras $C$ and $D$: for at least some classes $[u]\in K_1(A)$, we want to (non-canonically) construct a `boundary class' $\partial(u)\in K_0(C\cap D)$ that has similar exactness properties with respect to $\iota$ and $\sigma$.  

The next two lemmas concern `almost idempotents'.  We would guess results like these are well-known to experts, but could not find what we needed in the literature.  

\begin{lemma}\label{idem lem}
For any $\epsilon,c>0$ there exists $\delta\in (0,1/16)$ with the following property.  Let $A$ be a Banach algebra and $e\in A$ satisfy $\|e^2-e\|<\delta$ and $\|e\|\leq c$.  Let $\chi$ be the characteristic function of $\{z\in \C\mid \text{Re}(z)>1/2\}$.  Then $\chi(e)$ (defined via the holomorphic functional calculus) is a well-defined idempotent, and satisfies $\|\chi(e)-e\|<\epsilon$.
\end{lemma}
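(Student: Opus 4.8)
The plan is to run the holomorphic functional calculus for $e$, keeping everything quantitative via the explicit ``approximate resolvent'' $R(z):=\frac1z+\frac{e}{z(z-1)}$ (which is literally $(z-e)^{-1}$ when $e$ is a genuine idempotent).

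\emph{Locating the spectrum.} First I would fix $\delta$ small, subject to $\delta<1/16$ and one further smallness condition settled at the end. Writing $w:=e^2-e$, so that $\|w\|<\delta$ and $w$ commutes with $e$ (both products equal $e^3-e^2$), a one-line computation using $e^2=e+w$ gives, for $z\notin\{0,1\}$,
\[ (z-e)R(z)=R(z)(z-e)=1-\frac{w}{z(z-1)}. \]
Hence whenever $|z|\,|z-1|>\|w\|$ the element $z-e$ is invertible, with
\[ \|(z-e)^{-1}\|\le \|R(z)\|\,\Big(1-\tfrac{\|w\|}{|z||z-1|}\Big)^{-1},\qquad \|R(z)\|\le\frac{1}{|z|}+\frac{c}{|z||z-1|}. \]
In particular $\sigma(e)\subseteq\{z:|z|\,|z-1|\le\delta\}$, and a short case split ($|z|\le\tfrac12$ versus $|z|>\tfrac12$) shows this set lies in $B(0,2\delta)\cup B(1,2\delta)$. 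Since $2\delta<\tfrac18$, these two balls are disjoint and miss the line $\{\mathrm{Re}(z)=\tfrac12\}$, so $\chi$ restricts to a holomorphic (locally constant) function on a neighbourhood of $\sigma(e)$, identically $0$ on the first ball and $1$ on the second. Therefore $\chi(e)$ is well defined, and since $\chi^2=\chi$ there, $\chi(e)^2=\chi(e)$ is an idempotent.

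\emph{The norm estimate.} Set $r:=\sqrt\delta$; from $\delta<1/16$ one gets $2\delta<r<\tfrac14$ and $r(1-r)>\delta$, so the positively oriented circles $\Gamma_0:=\partial B(0,r)$ and $\Gamma_1:=\partial B(1,r)$ lie in the resolvent set, together wind once around $\sigma(e)$, and carry $\chi\equiv 0$, $\chi\equiv 1$ respectively. The functional calculus, applied to $\chi$ and to the identity function $z\mapsto z$, then gives
\[ \chi(e)-e=\frac{1}{2\pi i}\oint_{\Gamma_1}(1-z)(z-e)^{-1}\,dz\ -\ \frac{1}{2\pi i}\oint_{\Gamma_0}z(z-e)^{-1}\,dz. \]
On $\Gamma_1$ one has $|1-z|=r$, $|z||z-1|\ge r(1-r)$, and the resolvent bound from the previous step gives $\|(z-e)^{-1}\|\le K_1(c)/r$ once $\delta$ is small enough that the factor $(1-\|w\|/|z||z-1|)^{-1}$ is at most $2$; similarly $|z|=r$ and $\|(z-e)^{-1}\|\le K_0(c)/r$ on $\Gamma_0$. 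Each integral is thus at most $\frac{1}{2\pi}\cdot(2\pi r)\cdot r\cdot\frac{K_i(c)}{r}=K_i(c)\,r$, so $\|\chi(e)-e\|\le K(c)\sqrt\delta$ for an explicit constant $K(c)$. Choosing $\delta<\min\{1/16,(\epsilon/K(c))^2\}$ then yields $\|\chi(e)-e\|<\epsilon$.

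\emph{Where the difficulty lies.} The delicate point is precisely this last estimate. A fixed contour (radii $\tfrac18$, say) only bounds $\|\chi(e)-e\|$ by an absolute constant, not by a quantity tending to $0$ with $\delta$; one must let the contours collapse onto $\{0,1\}$ as $\delta\to 0$. The scale $r\asymp\sqrt\delta$ is the sweet spot: it is still $\gg\delta$, so that $|z||z-1|\gg\delta$ on the contour and $R(z)$ stays under control, yet small enough that the $r^2$ arising from (length)$\times$(size of $1-z$ or $z$) defeats the $1/r$ growth of the resolvent. Everything else is routine manipulation of the formula for $R(z)$ together with standard properties of the holomorphic functional calculus.
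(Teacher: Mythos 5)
Your proof is correct and follows essentially the same strategy as the paper's: localize $\sigma(e)$ near $\{0,1\}$, integrate over circles of radius $\asymp\sqrt{\delta}$, and combine a resolvent bound of size $\asymp 1/\sqrt{\delta}$ with the $\asymp\sqrt{\delta}$ contributions from $|\chi(z)-z|$ and the contour lengths. The only stylistic difference is that you obtain the resolvent bound directly from the explicit approximate inverse $R(z)=\tfrac1z+\tfrac{e}{z(z-1)}$, whereas the paper multiplies $(z-e)^{-1}$ by $((1-z)-e)$ and inverts $(z^2-z)-(e^2-e)$ via a Neumann series; both identities are the same algebra written in different order.
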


\begin{proof}
First note that if $\delta\in (0,1/16)$ and if $z\in \C$ satisfies $|z^2-z|<\delta$, then $|z||z-1|<\delta$, and so either $|z|<\sqrt{\delta}$, or $|z-1|<\sqrt{\delta}$.  Hence by the polynomial spectral mapping theorem, if $\|e^2-e\|<\delta$, then the spectrum of $e$ is contained in the union of the balls of radius $\sqrt{\delta}$ and centered at $0$ and $1$ respectively.  As $\sqrt{\delta}<1/2$, it follows that $\chi$ is holomorphic on the spectrum of $e$.   Hence $\chi(e)$ makes sense under the assumptions, and is an idempotent by the functional calculus.  

Let now $r=2\sqrt{\delta}<1/2$, and let $\gamma_0$ and $\gamma_1$ be positively oriented circles centered on $0$ and $1$ respectively, and of radius $r$.  Then by the above remarks, if $\|e^2-e\|<\delta$ we have that $\gamma_0\cup \gamma_1$ is a positively oriented contour on which $\chi$ is holomorphic, and that has winding number one around each point of the spectrum of $e$.  Hence by definition of the holomorphic functional calculus 
$$
\chi(e)-e=\frac{1}{2\pi i } \int_{\gamma_0\cup \gamma_1} (\chi(z)-z)(z-e)^{-1}dz.
$$
Estimating the norm of this using that $|\chi(z)-z|=r$ for $z\in \gamma_0\cup \gamma_1$ gives
\begin{equation}\label{chi e minus e}
\|\chi(e)-e\|\leq \frac{1}{2\pi} \int_{\gamma_0\cup\gamma_1} r\|(z-e)^{-1}\||dz|.
\end{equation}

Let us estimate the term $\|(z-e)^{-1}\|$ for $z\in \gamma_0\cup \gamma_1$.  Set $w=1-z$.  Then we have that $w-e$ is also invertible, and 
\begin{align}\label{z minus e}
\|(z-e)^{-1}\| & =\|(w-e)(w-e)^{-1}(z-e)^{-1}\| \nonumber \\ & \leq (c+|w|)\||((z^2-z)-(e^2-e))^{-1}\| \nonumber \\ & \leq (c+2)\||((z^2-z)-(e^2-e))^{-1}\|.
\end{align}
Now, we have that for $z\in \gamma_0\cup \gamma_1$, 
$$
|z^2-z|=|z||z-1|\geq \frac{1}{2}r=\sqrt{\delta}>\delta>\|e^2-e\|.
$$
Hence using the Neumann series inverse formula 
$$
((z^2-z)-(e^2-e))^{-1}=\frac{1}{z^2-z}\Big(1-\frac{e^2-e}{z^2-z}\Big)^{-1}=\frac{1}{z^2-z}\sum_{n=0}^\infty \Big(\frac{e^2-e}{z^2-z}\Big)^n
$$
we get the estimate
$$
\|((z^2-z)-(e^2-e))^{-1}\|\leq \frac{1}{|z^2-z|-\|e^2-e\|}\leq \frac{1}{\frac{1}{2}r-\delta}=\frac{1}{\sqrt{\delta}-\delta}.
$$
Combining this with line \eqref{z minus e}, we see that for $z\in \gamma_0\cup \gamma_1$, 
$$
\|(z-e)^{-1}\|\leq \frac{c+2}{\sqrt{\delta}-\delta}.
$$

To complete the proof, substituting the above estiumate into line \eqref{chi e minus e} gives that 
\begin{align*}
\|\chi(e)-e\| & \leq \frac{1}{2\pi} \int_{\gamma_0\cup\gamma_1} \frac{r(c+2)}{\sqrt{\delta}-\delta}|dz| =\frac{1}{2\pi} \big(\text{Length}(\gamma_0)+\text{Length}(\gamma_1)\big)\frac{r(c+2)}{\sqrt{\delta}-\delta}.
\end{align*}
Substituting in $\text{Length}(\gamma_0)=\text{Length}(\gamma_1)=2\pi r$ and $r=2\sqrt{\delta}$ we get 
$$
\|\chi(e)-e\|\leq \frac{4\sqrt{\delta}(c+2)}{1-\sqrt{\delta}},
$$
which is enough to complete the proof.
\end{proof}

\begin{definition}\label{eps in}
Let $A$ be a Banach algebra, let $X$ be a subset of $A$, let $a\in A$, and let $\epsilon> 0$.  The element $a$ is \emph{$\epsilon$-in} $X$, denoted $a\ine X$, if there exists $x\in X$ with $\|a-x\|\leq  \epsilon$. 
\end{definition}

\begin{lemma}\label{eps in k}
Let $A$ be a Banach algebra and $B$ a Banach subalgebra.  Then for all $c>0$ and all $\epsilon\in (0,\frac{1}{4c+6})$ there exists $\delta>0$ with the following property.
\begin{enumerate}[(i)]
\item Say $n\geq 1$ and say $e\in M_n(A)$ is an idempotent which is $\delta$-in $M_n(B)$ and such that $\|e\|\leq c$.  Then there is an idempotent $f\in M_n(B)$ with $\|e-f\|<\epsilon$.  Moreover, the class $[f]\in K_0(B)$ does not depend on the choice of $\epsilon$, $\delta$, or $f$.
\item Assume moreover that $A$ is unital, and that $B$ contains the unit.  Say $u\in M_n(A)$ is an invertible which is $\delta$-in $M_n(B)$ and such that $\|u^{-1}\|\leq c$.  Then there exists an invertible $v\in M_n(B)$ with $\|u-v\|<\epsilon$, and the class $[v]\in K_1(B)$ does not depend on the choice of $\epsilon$, $\delta$, or $v$. 
\end{enumerate}
\end{lemma}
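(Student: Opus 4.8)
The plan is to treat both parts by the same two-step scheme. Step one (\emph{rigidity}): from the given $\delta$-approximant in $M_n(B)$, manufacture a genuine idempotent (resp.\ invertible) lying in $M_n(B)$ and close to $e$ (resp.\ $u$). Step two (\emph{well-definedness of the class}): show that any two admissible elements are similar (resp.\ homotopic through invertibles), hence define the same $K$-theory class; the constant $\epsilon<\frac{1}{4c+6}$ will be used precisely to force the relevant perturbation products to come out strictly below $1$.

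\emph{Part (i).} Given $\epsilon$ and $c$, I would apply Lemma \ref{idem lem} with the constants $\epsilon/2$ and $c+1$ in place of its $\epsilon,c$, obtaining some $\delta'\in(0,1/16)$, and then set $\delta:=\min\{\delta'/(2c+3),\epsilon/2\}$. If $e\in M_n(A)$ is an idempotent with $\|e\|\le c$ and $b\in M_n(B)$ satisfies $\|e-b\|\le\delta$, then writing $b^2-b=(b^2-e^2)+(e-b)$ gives $\|b^2-b\|\le(\|b\|+\|e\|+1)\|b-e\|\le(2c+2)\delta<\delta'$ and $\|b\|\le c+1$, so Lemma \ref{idem lem} makes $f:=\chi(b)$ (holomorphic functional calculus computed in $\widetilde{M_n(B)}$) a well-defined idempotent with $\|f-b\|<\epsilon/2$. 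Since $\chi(0)=0$ and the holomorphic functional calculus is compatible with the character $\widetilde{M_n(B)}\to\C$, the element $f$ lies in $M_n(B)$, and $\|e-f\|<\epsilon$. For well-definedness it suffices to show: if $f_1,f_2$ are idempotents in $M_n(B)$ with $\|e-f_i\|<\epsilon_i$ for some $\epsilon_i\in(0,\frac{1}{4c+6})$, then $[f_1]=[f_2]$ in $K_0(M_n(B))\cong K_0(B)$. Indeed $\|f_i\|<c+\epsilon_i$ yields $\|2f_1-1\|<2c+1+\frac{1}{2c+3}=\frac{4(c+1)^2}{2c+3}$, while $\|f_1-f_2\|<\epsilon_1+\epsilon_2<\frac{1}{2c+3}$, so $\|f_1-f_2\|\,\|2f_1-1\|<\bigl(\tfrac{2c+2}{2c+3}\bigr)^2<1$; hence $z:=1-(f_1-f_2)(2f_1-1)=f_2f_1+(1-f_2)(1-f_1)$ is invertible in $\widetilde{M_n(B)}$ and satisfies $zf_1z^{-1}=f_2$, so $f_1$ and $f_2$ are similar idempotents.

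\emph{Part (ii).} Now $A$ is unital and $1\in B$, so $M_n(B)$ is a unital subalgebra of $M_n(A)$ with the same unit; here I would just take $\delta:=\epsilon/2$. If $u\in M_n(A)$ is invertible with $\|u^{-1}\|\le c$ and $b\in M_n(B)$ satisfies $\|u-b\|\le\delta$, then $\|1-u^{-1}b\|\le c\delta<\tfrac18$, so $b$ is invertible in $M_n(A)$; as $M_n(B)$ is inverse-closed in $M_n(A)$ (spectral permanence, since it is a unital $C^*$-subalgebra containing the unit), $b$ is invertible in $M_n(B)$, and $v:=b$ does the job. For well-definedness, given invertibles $v_1,v_2$ of $M_n(B)$ with $\|u-v_i\|<\epsilon_i<\frac{1}{4c+6}$, the bound $\|1-u^{-1}v_i\|<c\epsilon_i<\tfrac14$ forces $\|v_i^{-1}\|=\|(u^{-1}v_i)^{-1}u^{-1}\|<\tfrac43 c$, whence $\|1-v_1^{-1}v_2\|\le\|v_1^{-1}\|\,\|v_1-v_2\|<\tfrac43 c(\epsilon_1+\epsilon_2)<\tfrac23<1$; then $t\mapsto 1-t(1-v_1^{-1}v_2)$ is a path of invertibles in $M_n(B)$ joining $1$ to $v_1^{-1}v_2$, so $v_1$ and $v_2$ are homotopic through invertibles and $[v_1]=[v_2]$ in $K_1(B)$.

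All of this is routine once Lemma \ref{idem lem} and the two standard perturbation facts (nearby idempotents are similar, nearby invertibles are homotopic) are in hand. The points needing care are, in part (i), keeping track of the constant $\frac{1}{4c+6}$ so that the perturbation product $\|f_1-f_2\|\,\|2f_1-1\|$ stays strictly below $1$ (the displayed bound $\bigl(\tfrac{2c+2}{2c+3}\bigr)^2<1$ shows it does, but with little room as $c$ grows); and, in part (ii), the step deducing invertibility of $b$ in $M_n(B)$ from invertibility in $M_n(A)$, which genuinely uses that $M_n(B)$ is inverse-closed in $M_n(A)$ — automatic for a unital $C^*$-subalgebra, which is the case relevant to the applications, though for a general Banach subalgebra one should add this as a hypothesis. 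I expect the well-definedness estimate of part (i) to be the main technical point.
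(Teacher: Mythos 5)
Your proof is correct and follows essentially the same route as the paper's: part (i) feeds an approximant $b\in M_n(B)$ into Lemma~\ref{idem lem} and then shows any two nearby idempotents are similar; part (ii) perturbs $u$ to $v\in M_n(B)$, bounds $\|v^{-1}\|$ via a Neumann series, and joins any two admissible $v$'s by a path. (The paper cites \cite[Prop.~4.3.2]{Blackadar:1998yq} for the similarity and uses $v^{-1}v'=e^z$ for the homotopy, where you write the similarity $z=f_2f_1+(1-f_2)(1-f_1)$ and the straight-line path explicitly; these are cosmetic differences.) One genuine point you make that the paper's proof passes over silently: in part (ii), deducing invertibility of $v$ \emph{in $M_n(B)$} from $\|1-u^{-1}v\|<1$ requires $M_n(B)$ to be inverse-closed in $M_n(A)$. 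The paper simply asserts ``$v$ is invertible too'' and then computes $\|v^{-1}\|$ and $v^{-1}v'\in M_n(B)$ as if this were automatic, but for a general Banach subalgebra it is not (e.g.\ the disk algebra in $C(\mathbb{T})$). It does hold in every case the paper actually uses the lemma (the $B$'s are $C^*$-subalgebras like $\widetilde{C}$, $\widetilde{D}$, $\widetilde{C\cap D}$ inside $\widetilde{A}$), so the applications are unaffected, but you are right that as a statement about abstract Banach subalgebras this hypothesis should be added.
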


\begin{proof}
Let $\delta>0$, to be chosen depending on $c$ and $\epsilon$ in a moment, and assume that $e$ is $\delta$-in $M_n(B)$ so there is $b\in M_n(B)$ with $\|b-e\|<\delta$.  Then  
$$
\|b^2-b\|\leq \|e\|\|b-e\|+\|b\|\|b-e\|+\|b-e\|\leq (2c+\delta+1)\delta.
$$
Let $\chi$ be the characteristic function of the half-plane $\{z\in \C\mid \text{Re}(z)>1/2\}$.  Then for suitably small $\delta$ (depending only on $c$ and $\epsilon$), we may apply Lemma \ref{idem lem} to get that $\|b-\chi(b)\|<\epsilon/2$.  Setting $f=\chi(b)$ and assuming also that $\delta<\epsilon/2$ we get that 
$$
\|e-f\|\leq \|e-b\|+\|b-f\|<\epsilon
$$
as desired.  

To see that $[f]\in K_0(B)$ does not depend on the choice of $f$, let $f'\in M_n(B)$ be another idempotent with $\|e-f'\|<\epsilon$.  Then $\|f-f'\|<2\epsilon<1/(2c+3)$.  As $\|f\|\leq c+1$, we see that 
$$
\|f-f'\|<\frac{1}{2c+3}\leq \frac{1}{\|2f-1\|},
$$
whence \cite[Proposition 4.3.2]{Blackadar:1998yq} implies that $f$ and $f'$ are similar, and so in particular define the same K-theory class.

For part (ii), let $\epsilon_0=\frac{1}{4c}$, let $\epsilon\in (0,\epsilon_0]$, and let $\delta=\epsilon$.  Choose any $v\in M_n(B)$ with $\|u-v\|<\delta$.  Then 
$$
\|1-u^{-1}v\|=\|u^{-1}(u-v)\|\leq \|u^{-1}\|\|u-v\|< c\delta=1/4.
$$
Hence $u^{-1}v$ is invertible, and so $v$ is invertible too.  Moreover, estimating the norm of $(u^{-1}v)^{-1}$ using the series expression $(u^{-1}v)^{-1}=\sum_{n=0}^\infty (1-u^{-1}v)^n$ gives that $\|v^{-1}u\|\leq 2$, whence $\|v^{-1}\|=\|v^{-1}uu^{-1}\|\leq 2c$.
On the other hand, if $v'$ also satisfies $\|u-v'\|<\epsilon_0$, then $\|v-v'\|<2\epsilon_0$, and so 
$$
\|1-v^{-1}v'\|\leq \|v^{-1}\|\|v-v'\|<4c\epsilon_0=1.
$$
Hence $v^{-1}v'=e^z$ for some $z\in M_n(B)$ (see for example \cite[II.1.5.3]{Blackadar:2006eq}), and so $\{ve^{tz}\}_{t\in [0,1]}$ is a homotopy between $v$ and $v'$ passing through invertibles in $M_n(B)$, giving that $[v]=[v']$ in $K_1(B)$.
\end{proof}

\begin{definition}\label{eps in k def}
Let $c>0$, let $\epsilon \in(0,\frac{1}{4c+6})$, and let $\delta>0$ be as in Lemma \ref{eps in k}.  Let $A$ be a Banach algebra, and $B$ be a Banach subalgebra of $A$.
\begin{enumerate}
\item Say $e\in M_n(A)$ is an idempotent that is $\delta$-in $M_n(B)$.  Then we write $\{e\}_B\in K_0(B)$ for the class of any idempotent $f\in M_n(B)$ with $\|e-f\|<\epsilon$.  
\item Say $u\in M_n(\widetilde{A})$ is an invertible that is $\delta$-in $M_n(\widetilde{B})$.  Then we write $\{u\}_B\in K_1(B)$ for the class of any invertible $v\in M_n(\widetilde{B})$ with $\|u-v\|<\epsilon$.  
\end{enumerate}
\end{definition}


The next definition is the key technical point that we need to construct our boundary classes.

\begin{definition}\label{lift def}
Let $c>0$, let $\epsilon \in(0,\frac{1}{4c+6})$, and let $\delta>0$ be as in Lemma \ref{eps in k}.   Let $A$ be a Banach algebra, let $C$ and $D$ be Banach subalgebras of $A$, let $u\in M_n(\widetilde{A})$ be an invertible element for some $n$.  An element $v\in M_{2n}(\widetilde{A})$ is a \emph{$(\delta,c,C,D)$-lift} of $u$ if it satisfies the following conditions:
\begin{enumerate}[(i)]
\item \label{lift1} $\|v\|\leq c$ and $\|v^{-1}\|\leq c$;
\item \label{lift2} $v\ind M_{2n}(\widetilde{D})$;
\item \label{lift3} $v\begin{pmatrix} u^{-1} & 0 \\ 0 & u \end{pmatrix}\ind M_{2n}(\widetilde{C})$;
\item \label{lift4} $v\begin{pmatrix} 1 & 0 \\ 0 & 0 \end{pmatrix} v^{-1}\ind M_{2n}(\widetilde{C\cap D})$;
\item \label{lift5} with notation as in Definition \ref{eps in k def}, the $K$-theory class 
$$
\Big\{v\begin{pmatrix} 1 & 0 \\ 0 & 0 \end{pmatrix} v^{-1}\Big\}_{\widetilde{C\cap D}} -\begin{bmatrix} 1 & 0 \\ 0 & 0 \end{bmatrix}\in K_0(\widetilde{C\cap D})
$$ 
is actually in the subgroup $K_0(C\cap D)$.
\end{enumerate}\end{definition}

We may now use such lifts to construct `boundary classes'.

\begin{proposition}\label{bound lem}
Let $c>0$, let $\epsilon \in(0,\frac{1}{4c+6})$.  Then there is $\delta>0$ satisfying the conclusion of Lemma \ref{eps in k},
and with the following properties.   Let $A$ be a Banach algebra, and let $u\in M_n(\widetilde{A})$ be an invertible with $\|u\|\leq c$ and $\|u^{-1}\|\leq c$.  Assume there exist Banach subalgebras $C$ and $D$ of $A$ and a $(\delta,c,C,D)$-lift $v$ of $u$.  Then the $K$-theory class
$$
\partial_v u:=\Big\{v\begin{pmatrix} 1 & 0 \\ 0 & 0 \end{pmatrix} v^{-1}\Big\}_{\widetilde{C\cap D}}-\begin{bmatrix} 1 & 0 \\ 0 & 0 \end{bmatrix}  \in K_0(C\cap D)
$$
has the following properties.
\begin{enumerate}[(i)]
\item If $\iota$ is as in Definition \ref{iota def}, then $\iota(\partial_v u)=0$ in $K_0(C)\oplus K_0(D)$.
\item If $\partial_v u=0$, then there is $l\in \N$ and an invertible $x\ine M_{n+l}(\widetilde{D})$ such that $(u\oplus 1_l)x^{-1}\ine M_{2n}(\widetilde{C})$.  In particular, if $\sigma$ is as in Definition \ref{iota def}, then $\sigma(\{(u\oplus1_l)x^{-1}\}_C,\{x\}_D)=[u]$ in $K_1(A)$.
\end{enumerate}
\end{proposition}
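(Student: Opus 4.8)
\emph{Overall approach.} The proposition is an approximate, lift-theoretic substitute for exactness of the Mayer--Vietoris sequence at the $K_1(A)$-spot, and I would imitate the exact argument. In the exact case ($C,D$ ideals with $C+D=A$) one uses $C/(C\cap D)\cong A/D$ and the index map of $0\to C\cap D\to C\to C/(C\cap D)\to 0$: vanishing of $\partial[u]$ lets one pull the image of $[u]$ in $K_1(A/D)$ back to $K_1(C)$, and unwinding this yields (after stabilising) a factorisation $u=(uw^{-1})w$ with $w$ invertible over $\widetilde C$ and $uw^{-1}$ invertible over $\widetilde D$. In our setting the lift $v$ and the idempotent $p:=v\bigl(\begin{smallmatrix}1&0\\0&0\end{smallmatrix}\bigr)v^{-1}$ will stand in for the lifted invertible and the idempotent produced by the index map. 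Write $E:=\bigl(\begin{smallmatrix}1&0\\0&0\end{smallmatrix}\bigr)$ and $U:=\bigl(\begin{smallmatrix}u^{-1}&0\\0&u\end{smallmatrix}\bigr)$; then $UEU^{-1}=E$, so $p=vEv^{-1}=(vU)E(vU)^{-1}$.

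\emph{Part (i).} I would show $\partial_v u$ dies in $K_0(C)$ and in $K_0(D)$ separately. By \eqref{lift3}, $vU$ is $\delta$-close to an invertible $c_0\in M_{2n}(\widetilde C)$ (with $\|c_0^{-1}\|$ controlled, using $\|vU\|,\|(vU)^{-1}\|\le c^2$ and $\delta$ small), so $c_0Ec_0^{-1}$ is an idempotent in $M_{2n}(\widetilde C)$ close to $p$; since conjugation by an invertible over $\widetilde C$ fixes $K_0$-classes, $\{p\}_{\widetilde C}=[c_0Ec_0^{-1}]=[E]$ in $K_0(\widetilde C)$. Hence the image of $\partial_v u$ in $K_0(\widetilde C)$, and therefore (as $K_0(C)\hookrightarrow K_0(\widetilde C)$) in $K_0(C)$, is zero. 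The $D$-coordinate is identical, with $v$ (which is $\delta$-close to $M_{2n}(\widetilde D)$ by \eqref{lift2}) replacing $vU$. For general Banach algebras one runs these conjugations through the $\{\cdot\}$-formalism of Lemma \ref{eps in k}, so as not to use $c_0^{-1}$ literally when $\widetilde C$, $\widetilde D$ fail to be inverse-closed.

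\emph{Part (ii).} Assume $\partial_v u=0$, i.e.\ $\{p\}_{\widetilde{C\cap D}}=[E]$ in $K_0(\widetilde{C\cap D})$, and fix an idempotent $p'\in M_{2n}(\widetilde{C\cap D})$ with $\|p'-p\|$ small representing $\{p\}_{\widetilde{C\cap D}}$, so $[p']=[E]$ there. Then for some $l$ there is an invertible $g$ over $\widetilde{C\cap D}$ with $g(E\oplus 1_l)g^{-1}=p'\oplus 1_l$; replacing $u$ by $u\oplus 1_l$ and $v$ by the correspondingly padded element (still a $(\delta,c,C,D)$-lift, with $E$, $U$, $p$ enlarged to match), we may assume $gEg^{-1}=p'$. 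Put $z:=g^{-1}v$; since $g^{-1}p'g=E$ we get $zEz^{-1}=g^{-1}(p-p')g+E$, so $z$ almost commutes with $E$, and its block-diagonal part $z_0=z_{11}\oplus z_{22}$ is invertible and close to $z$. Because $g$ lies over $\widetilde{C\cap D}\subseteq\widetilde D$ and $v\ind M_{2n}(\widetilde D)$, the element $z$ — hence $z_{11}$ — is close to a matrix algebra over $\widetilde D$; because $g$ lies over $\widetilde{C\cap D}\subseteq\widetilde C$, $vU\ind M_{2n}(\widetilde C)$, and $zU=z_{11}u^{-1}\oplus z_{22}u+(\text{small})$, the element $z_{11}u^{-1}$ is close to a matrix algebra over $\widetilde C$. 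Taking $x$ to be an honest invertible near $\widetilde D$ that is close to $z_{11}$ then gives $x\ine M_{n+l}(\widetilde D)$ and $(u\oplus 1_l)x^{-1}=(z_{11}u^{-1})^{-1}+(\text{small})\ine M_{2n}(\widetilde C)$; the final assertion follows since $u\oplus 1_l$ is then $\epsilon$-close to a product of an invertible over $\widetilde C$ and one over $\widetilde D$, together with $[ab]=[a]+[b]$ in $K_1(A)$ and the compatibility of $\{\cdot\}_C$, $\{\cdot\}_D$ with the maps into $K_1(A)$.

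\emph{The main obstacle.} The point demanding real care, and the one I expect to be hardest, is quantitative: the invertible $g$ witnessing $[p']=[E]$ must be produced with $\|g\|$, $\|g^{-1}\|$ bounded by a function of $c$ alone (equivalently of $\|p'\|\le c^2$ and $\|E\|=1$). Only then are the errors $g^{-1}(p-p')g$ and all the "$\ine$/$\ind$" perturbations above controllable, since $\delta$ — and with it $\|p-p'\|$ and the quality of \eqref{lift2}, \eqref{lift3}, \eqref{lift4} — must be fixed from $c$ and $\epsilon$ before $g$ enters the picture, so a naive conjugation by a norm-uncontrolled $g$ would destroy the closeness of $x$ to $\widetilde D$ and of $(u\oplus1_l)x^{-1}$ to $\widetilde C$. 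For $C^*$-algebras this control is standard (pass from the idempotents $p'$, $E$ to their associated projections by similarities of norm controlled by $\|p'\|$, $\|E\|$; realise the equality of $K_0$-classes of projections by a norm-one partial isometry after a further stabilisation; pass back), and this is the case relevant to the applications; for general Banach algebras it needs a separate argument, or one moves from $E$ to $p'$ along a short chain of nearby idempotents, re-exactifying via Lemma \ref{eps in k} at each step so the errors — each governed only by one small step and by the fixed constants — do not accumulate. With the controlled $g$ in hand, the construction of $x$ is exactly the one above, and what remains is bookkeeping with matrix sizes and with the $\delta$-versus-$\epsilon$ constants of Lemma \ref{eps in k}.
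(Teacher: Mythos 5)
Your approach is essentially the paper's own.  Part (i) is the same: one kills $\partial_v u$ in $K_0(D)$ using condition \eqref{lift2}, and in $K_0(C)$ using condition \eqref{lift3} together with the observation $UEU^{-1}=E$ where $E=\bigl(\begin{smallmatrix}1&0\\0&0\end{smallmatrix}\bigr)$, $U=\bigl(\begin{smallmatrix}u^{-1}&0\\0&u\end{smallmatrix}\bigr)$.  Part (ii) is structurally identical as well: from $\partial_v u=0$ one produces (after padding by $1_l$, via the block-permutation bookkeeping the paper spells out) an invertible $g$ over $\widetilde{C\cap D}$ conjugating $E$ to the idempotent near $vEv^{-1}$, observes that $g^{-1}v$ is then almost block-diagonal, and reads off $x$ and $(u\oplus 1_l)x^{-1}$ from the blocks.

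The obstacle you single out is genuinely the one delicate point, and it is worth stressing that the paper's proof does not explicitly dispatch it either.  Where the paper introduces the invertible $w\in M_{2n+l}(\widetilde{C\cap D})$ witnessing $\partial_v u=0$, the next displayed bound $\delta_1$ is a priori of order $\|w\|\,\|w^{-1}\|$ times a quantity depending on $c$ and $\delta$; and the distances of $w_1v_1$ to $M(\widetilde D)$ and of $w_1v_1\bigl(\begin{smallmatrix}u_1^{-1}&0\\0&u_1\end{smallmatrix}\bigr)$ to $M(\widetilde C)$ again pick up a factor $\|w_1\|$.  This is visibly incompatible with the running convention that the $\delta_n$ depend on $c$ and $\delta$ alone, unless one knows the witness can be chosen with norm controlled by $c$.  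Your fix in the $C^*$-setting is exactly what is needed: move to the range projections by similarities of norm bounded in terms of $\|p'\|\leq c^2$, realise the vanishing of the $K_0$-class by a unitary after a further stabilisation, and move back — this gives $\|w\|=\|w^{-1}\|=1$, restores all the $\delta_n$ bookkeeping, and covers every application in the paper.  Your alternative sketch for general Banach algebras, however, does not obviously close the gap: a "short chain of nearby idempotents" arising from a homotopy has length not controlled by $c$, so the total conjugating norm is still unbounded, and the quantity $\|vEv^{-1}-f\|$ that gets amplified cannot itself be re-exactified because $v$ and $f$ are fixed once $\delta$ is.  For Banach algebras the honest repair is to make $\delta$ depend additionally on $\|w\|$ and $\|w^{-1}\|$ (i.e.\ on a bound in the hypothesis that the $K_0$-class vanishes with a controlled witness), and then check in each application that such a bound is available — trivially $1$ in the $C^*$-case via your projection argument.
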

 
\begin{proof}
Let us first consider $\iota(\partial_v u)$.  Note first that as $v$ is $\delta$-in $M_{2n}(\widetilde{D})$, there is $w\in M_{2n}(\widetilde{D})$ such that $\|w-v\|<\delta$.  In particular, $w$ is invertible for $\delta$ suitably small.  It follows by definition of the left hand side that 
$$
\Big\{v\begin{pmatrix} 1 & 0 \\ 0 & 0 \end{pmatrix} v^{-1}\Big\}_{\widetilde{D}}=\Big[w\begin{pmatrix} 1 & 0 \\ 0 & 0 \end{pmatrix} w^{-1}\Big]
$$
in $K_0(\widetilde{D})$ for all suitably small $\delta$.  Hence as elements of $K_0(\widetilde{D})$, 
$$
\Big\{v\begin{pmatrix} 1 & 0 \\ 0 & 0 \end{pmatrix} v^{-1}\Big\}_{\widetilde{D}}-\begin{bmatrix} 1 & 0 \\ 0 & 0 \end{bmatrix}=\Big[w\begin{pmatrix} 1 & 0 \\ 0 & 0 \end{pmatrix} w^{-1}\Big]-\begin{bmatrix} 1 & 0 \\ 0 & 0 \end{bmatrix}.
$$
However, as $w$ is in $M_{2n}(\widetilde{D})$, $\Big[w\begin{pmatrix} 1 & 0 \\ 0 & 0 \end{pmatrix} w^{-1}\Big]=\begin{bmatrix} 1 & 0 \\ 0 & 0 \end{bmatrix}$ in $K_0(\widetilde{D})$, so the above is the zero class in $K_0(\widetilde{D})$, hence also in $K_0(D)$.  

On the other hand, our assumption that $v\begin{pmatrix} u^{-1} & 0 \\ 0 & u \end{pmatrix}$ is $\delta$-in $M_{2n}(\widetilde{C})$ implies similarly that for all $\delta$ suitably small, we have
\begin{align*}
\partial_v u & = \Big\{v\begin{pmatrix} 1 & 0 \\ 0 & 0 \end{pmatrix} v^{-1}\Big\}_{\widetilde{C}}-\begin{bmatrix} 1 & 0 \\ 0 & 0 \end{bmatrix} \\
& =\Big\{\begin{pmatrix} u & 0 \\ 0 & u^{-1} \end{pmatrix}v^{-1}v\begin{pmatrix} 1 & 0 \\ 0 & 0 \end{pmatrix} v^{-1}v\begin{pmatrix} u^{-1} & 0 \\ 0 & u \end{pmatrix}\Big\}_{\widetilde{C}}-\begin{bmatrix} 1 & 0 \\ 0 & 0 \end{bmatrix}, 
\end{align*}
which is zero as a class in $K_0(C)$.  We have shown that the image of $\partial_vu$ in both $K_0(C)$ and $K_0(D)$ is zero, whence $\iota(\partial_v u)=0$ as claimed.

Throughout the rest of the proof, whenever we write `$\delta_n$', it is implicit that this is a positive number, depending only on $c$ and $\delta$, and that tends to zero when $\delta$ tends to zero as long as $c$ stays in a bounded set. 

Now let us assume that $\partial_v u=0$.  This implies that there exists $l\in \N$ and an invertible element $w$ of $M_{2n+l}(\widetilde{C\cap D})$ such that 
$$
\Big\|w\Big(v\begin{pmatrix} 1 & 0 \\ 0 & 0 \end{pmatrix} v^{-1}\oplus 1_l\Big) w^{-1} -  \begin{pmatrix} 1 & 0 \\ 0 & 0 \end{pmatrix} \oplus 1_l\Big\|<\delta_1
$$
for some $\delta_1>0$.  Write $v=\begin{pmatrix} v_{11} & v_{12} \\ v_{21} & v_{22}\end{pmatrix}$, and let 
$$
v_1:=\begin{pmatrix} v_{11} & 0 & v_{12} & 0 \\ 0 & 1_l & 0 & 0 \\ v_{21} & 0 & v_{22} & 0 \\ 0 & 0 & 0 & 1_l\end{pmatrix}\in_\delta M_{n+l+n+l}(\widetilde{D})
$$
(writing the matrix size as $n+l+n+l$ is meant to help understand the size of the various blocks) and if 
$$
w=\begin{pmatrix} w_{11} & w_{12} & w_{13} \\ w_{21} & w_{22} & w_{23} \\ w_{31} & w_{32} & w_{33} \end{pmatrix}\in M_{n+n+l}(\widetilde{C\cap D})
$$ 
let 
$$
w_1:=\begin{pmatrix} w_{11} & 0 & w_{12} & w_{13} \\ 0 & 1_l & 0 & 0 \\ w_{21} &  & w_{22} & w_{23} \\ w_{31} & 0 & w_{32} & w_{33} \end{pmatrix}\in M_{n+l+n+l}(\widetilde{C\cap D}).
$$
Then in $M_{(n+l)+(n+l)}(\widetilde{C})$ we have 
$$
\Big\|w_1v_1\begin{pmatrix} 1 & 0 \\ 0 & 0 \end{pmatrix}v_1^{-1}w_1- \begin{pmatrix} 1 & 0 \\ 0 & 0 \end{pmatrix}\Big\|<\delta_2
$$
for some $\delta_2$.  This implies that for $\delta$ suitably small there exist invertible $x,y\in M_{n+l}(\widetilde{D})$ and $\delta_3$ such that
$$
\Big\|w_1v_1-\begin{pmatrix} x & 0 \\ 0 & y \end{pmatrix}\Big\|<\delta_3.
$$

Now, by assumption 
$$
v\begin{pmatrix} u^{-1} & 0 \\ 0 & u \end{pmatrix}\ind M_{2n}(\widetilde{C}).
$$ 
Write $u_1:=u\oplus 1_l\in M_{n+l}(\widetilde{A})$.  Then 
$$
v_1\begin{pmatrix} u_1^{-1} & 0 \\ 0 & u_1\end{pmatrix}\ind M_{(n+l)+(n+l)}(\widetilde{C}).
$$ 
and thus as $w_1$ is in $M_{2(n+l)}(\widetilde{C})$, we have that 
$$
w_1v_1\begin{pmatrix} u_1^{-1} & 0 \\ 0 & u_1\end{pmatrix}\in_{\delta_4}M_{2(n+l)}(\widetilde{C})
$$
for some $\delta_4$.  Hence in particular, $xu_1^{-1}$ is invertible for $\delta$ suitably small, is $\delta_4$-in $M_{n+l}(\widetilde{C})$, and has norm bounded above by some absolute constant depending only on $c$.  We now have that for $\delta$ suitably small (depending only on $\epsilon$ and $c$), $u_1x^{-1}$ is $\epsilon$-in $M_{n+l}(\widetilde{C})$ and that $x$ is $\epsilon$-in $M_{n+l}(\widetilde{D})$, completing the proof.
\end{proof}

\begin{definition}\label{bound class}
With notation as in Proposition \ref{bound lem}, we call $\partial_v(u)\in K_0(C\cap D)$ the \emph{boundary class} associated to the data $(u,v,C,D)$.
\end{definition}

\section{Approximate ideal structures and the vanishing theorem}\label{decomp bound sec}

Our main goal in this section is to show that approximate ideal structures in Definition \ref{intro decomp} can be used to build lifts as in Definition \ref{lift def}, and thus allow us to build boundary classes.

It would be possible to get analogous results for general Banach algebras, but it would make the statements and proofs more technical.  As our applications are all to the $K$-theory of $C^*$-algebras, at this stage we therefore specialise to that case.

First, it will be convenient to give a technical variation of Definition \ref{intro decomp}.  

\begin{definition}\label{decomp}
Let $A$ be a $C^*$-algebra, let $X\subseteq A$ be a subspace, and let $\delta>0$.  Then a \emph{$\delta$-ideal structure} for $X$ is a triple 
$$
(h,C,D)
$$
consisting of a positive contraction $h$ in the multiplier algebra of $A$, and $C^*$-subalgebras $C$ and $D$ of $A$ such that 
\begin{enumerate}[(i)]
\item \label{decom com prop} $\|[h,x]\|\leq \delta\|x\|$ for all $x\in X$;
\item \label{decom in prop} $hx$ and $(1-h)x$ are $\delta\|x\|$-in $C$ and $D$ respectively for all $x\in X$;
\item \label{decom int in} $h(1-h)x$ and $h^2(1-h)x$ are $\delta\|x\|$-in $C\cap D$ for all $x\in X$.
\end{enumerate}
We say that $A$ has an \emph{approximate ideal structure} over a class $\mathcal{C}$ of pairs of $C^*$-subalgebras if for any $\delta>0$ and finite dimensional subspace $X$ of $A$ there exists a $\delta$-ideal structure $(h,C,D)$ of $X$ with $(C,D)$ in $\mathcal{C}$.
\end{definition}

\begin{remark}\label{silly int rem}
The conditions on multiplying into the intersection in \eqref{decom int in} from Definition \ref{decomp} might look odd for two reasons.  First, they are asymmetric in $h$ and $1-h$: this is a red herring, however, as it would be essentially the same to require that $h(1-h)x$ and $h(1-h)^2x$ are both $\delta\|x\|$-in $C\cap D$.  Second, there are two conditions for $C\cap D$, and only one each for $C$ and $D$.  This seems ultimately attributable to the fact that one needs two polynomials to generate $C_0(0,1)$ as a $C^*$-algebra, but only one each for $C_0(0,1]$ and $C_0[0,1)$. 
\end{remark}

We need to show that admitting an approximate ideal structure bootstraps up to a stronger version of itself (following a suggestion of Aaron Tikuisis and Wilhelm Winter). 

\begin{lemma}\label{tw lem}
Say $A$ is a $C^*$-algebra, $X_0$ is a finite-dimensional subspace of $A$, and $N\geq 2$.  Then there exists a finite-dimensional subspace $X$ of $A$ containing $X_0$, such that for any $\delta>0$ there exists $\delta'>0$ such that if $(h,C,D)$ is a $\delta'$-ideal structure for $X$, then $(h,C,D)$ also satisfies the following properties:
\begin{enumerate}[(i)]
\item \label{decom com prop n} $\|[h,x]\|\leq \delta\|x\|$ for all $x\in X_0$;
\item \label{decom in prop n} for all $n\in \{1,...,N\}$, $h^nx$ (respectively, $h^n(1-h)x$, and $h^n(1-h)x$) is $\delta\|x\|$-in $C$ (respectively $D$, and $C\cap D$) for all $x\in X_0$.
\end{enumerate}
\end{lemma}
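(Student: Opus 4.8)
The plan is to take $X$ to be a suitable finite--dimensional subspace containing $X_0$ — plausibly $X_0$ itself, or at worst a bounded enlargement of $X_0$ by products of its own elements, which keeps it finite dimensional — and to reduce the whole statement to one family of assertions about $C\cap D$. Property \eqref{decom com prop n} is immediate: it is condition \eqref{decom com prop} in the definition of a $\delta'$-ideal structure for $X$, applied to $x\in X_0\subseteq X$, provided $\delta'\le\delta$. For \eqref{decom in prop n}, the assertion ``$h^n(1-h)x$ is $\delta\|x\|$-in $D$'' follows from the one for $C\cap D$ since $C\cap D\subseteq D$ (and if one also wants $(1-h)^nx$ close to $D$, it follows by interchanging $h$ and $1-h$, which is legitimate by the symmetry discussed in Remark \ref{silly int rem}). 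Finally, from the algebraic identity
$$
h^n\;=\;h-h(1-h)\bigl(1+h+\cdots+h^{n-2}\bigr)\;=\;h-\sum_{j=1}^{n-1}h^{j}(1-h)
$$
we get $h^nx=hx-\sum_{j=1}^{n-1}h^j(1-h)x$; since $hx\in_{\delta'\|x\|}C$ by condition \eqref{decom in prop}, the assertion ``$h^nx$ is $\delta\|x\|$-in $C$'' follows once we control each $h^j(1-h)x$ ($1\le j\le N$) inside $C\cap D\subseteq C$, the constants being arranged so that the total error stays below $\delta\|x\|$. Thus the whole lemma reduces to proving: for $x\in X_0$ and $1\le n\le N$, $h^n(1-h)x\in_{\delta\|x\|}C\cap D$.

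For $n=1,2$ this is exactly condition \eqref{decom int in}, so take $n\ge3$. The main tool is the functional-calculus fact that $\lambda\mapsto\lambda^n(1-\lambda)$ lies in the closed $*$-subalgebra of $C_0\bigl((0,1)\bigr)$ generated by $\lambda(1-\lambda)$ and $\lambda^2(1-\lambda)$: these two functions vanish nowhere on $(0,1)$ and separate its points, so generate $C_0\bigl((0,1)\bigr)$ by Stone--Weierstrass — this is the precise content of the comment in Remark \ref{silly int rem} that one needs two polynomials to generate $C_0(0,1)$. Hence, given the target $\delta$, I would first fix a small $\eta>0$ and a polynomial $P$ in two commuting variables with zero constant term such that $\sup_{\lambda\in[0,1]}\bigl|\lambda^n(1-\lambda)-P\bigl(\lambda(1-\lambda),\lambda^2(1-\lambda)\bigr)\bigr|<\eta$ for every $n\le N$; by continuous functional calculus (and $\operatorname{spec}(h)\subseteq[0,1]$) this gives $\bigl\|h^n(1-h)-P\bigl(h(1-h),h^2(1-h)\bigr)\bigr\|<\eta$ in $M(A)$, with $P$ depending only on $N$ and $\eta$. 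It then suffices to show that for each monomial with $a+b\ge1$ the element $\bigl(h(1-h)\bigr)^a\bigl(h^2(1-h)\bigr)^bx$ is $\delta_1\|x\|$-in $C\cap D$, for some $\delta_1>0$ depending only on $N$ and $\delta$; one then sums over the finitely many monomials occurring in $P$ (whose number and coefficients are now fixed) and absorbs the extra $\eta\|x\|$ error.

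This final point I would prove by induction on the degree $a+b$, and this induction is where the real difficulty lies. One peels off the last factor $h(1-h)$ or $h^2(1-h)$ and invokes condition \eqref{decom int in} of the $\delta'$-ideal structure for $X$ (applied to $x\in X\supseteq X_0$) to replace it by a genuine element of $C\cap D$; the remaining factors of $h$ and $1-h$ then have to be carried across that element, using the identities $h\cdot h(1-h)=h^2(1-h)$ and $(1-h)\cdot h(1-h)=h(1-h)-h^2(1-h)$ (and similar) to see that applying $h$ or $1-h$ to something close to $h(1-h)x$ or $h^2(1-h)x$ keeps one close to $C\cap D$ up to controlled error, while the norm estimate $\bigl\|\bigl(h(1-h)\bigr)^a\bigl(h^2(1-h)\bigr)^b\bigr\|\le 4^{-(a+b)}$ makes high-degree monomials negligible and lets the induction terminate. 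The main obstacle, and the part I expect to require care, is making this bookkeeping close up: the elements produced after peeling live in $C\cap D$ rather than in $X$, so $X$ (and any bounded set of self-products one adjoins) must be chosen large enough, and $\delta'$ small enough relative to $P$, to $N$, and to the depth of the recursion, that every intermediate element is still ``seen'' by the conditions defining a $\delta'$-ideal structure for $X$ and the accumulated error stays below $\delta\|x\|$. The order of quantifiers in the statement — $X$ first, then $\delta$, then $\eta$ and $P$, then $\delta'$ — is exactly what makes this possible.
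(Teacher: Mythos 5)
The opening reductions in your proposal are correct and rather nicer than what the paper writes: the telescoping identity $h^n = h - \sum_{j=1}^{n-1}h^j(1-h)$ cleanly reduces the $C$-case to the $C\cap D$-case, and $C\cap D\subseteq D$ handles the $D$-case, so the whole lemma does come down to showing $h^{a}(1-h)^{b}x$ is close to $C\cap D$ for $a\ge b\ge1$. (Incidentally, the Stone--Weierstrass step is superfluous: one has the \emph{exact} polynomial identity $h^{n+2}(1-h)=h^{n+1}(1-h)-h(1-h)\cdot h^{n}(1-h)$, which by induction writes every $h^{n}(1-h)$ as a polynomial in $u=h(1-h)$ and $v=h^{2}(1-h)$ — this is what the paper alludes to.)

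The gap is in the inductive step for the $C\cap D$-case, and it is not a bookkeeping issue one can paper over: the induction genuinely does not close. After you ``peel off'' a factor, the remainder $h(1-h)x$ or $h^{2}(1-h)x$ is replaced by an element $z\in C\cap D$, and you then need to control $h(1-h)z$ or $h\cdot z$ etc. But condition \eqref{decom int in} of a $\delta'$-ideal structure gives information about $h(1-h)x$ and $h^{2}(1-h)x$ \emph{only for $x\in X$}, and says nothing about what $h$, $1-h$, or $h(1-h)$ do to elements of $C\cap D$. Your proposed fix — enlarge $X$ (and its ``self-products'') so that the intermediate elements are ``seen'' by the $\delta'$-ideal structure — cannot work: the quantifier order in the statement is $X$, then $\delta$, then $\delta'$, then $(h,C,D)$, so $X$ is fixed before $C$ and $D$ exist, and one cannot salt $X$ with anything built from $C\cap D$. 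Your closing sentence has it backwards: the quantifier order is precisely what forbids this repair, not what permits it.

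What is missing is the paper's $m$-th root trick. One first rewrites a basis of $X_0$ as linear combinations of positive contractions $a_1,\dots,a_n$, and takes $X$ to be the span of all $a_i^{1/m}$ for $m\in\{1,\dots,N+1\}$. Because $a=a^{1/m}\cdots a^{1/m}$ with $m$ factors all lying in $X$, and $h$ almost commutes with everything in $X$, an expression like $(h(1-h))^{a}(h^{2}(1-h))^{b}a$ can be approximated by a product of $a+b$ factors, \emph{each} of the form $h(1-h)a^{1/(a+b)}$ or $h^{2}(1-h)a^{1/(a+b)}$ — i.e.\ each of the form covered directly by condition \eqref{decom int in} applied to an element of $X$. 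The distribution of the single copy of $a$ across all $a+b$ factors is exactly what lets the induction terminate without ever having to apply a decomposition condition to an element of $C\cap D$. Without some device of this kind your argument cannot be completed.
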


\begin{proof}
Take a basis of $X_0$ consisting of contractions, and write each of these as a sum of four positive contractions.  Let $X_1$ be the space of spanned by all these positive contractions, say $\{a_1,...,a_n\}$.  Ley $X$ be spanned by all $m^\text{th}$ roots of all of $a_1,...,a_n$ for $m\in \{1,...,N+1\}$.  Clearly if $\delta'\leq \delta$, then as $Y$ contains $X$, we have the almost commutation property in the statement.

Let us now look at $h^nx$ for $x\in X_0$.  It suffices to look at $h^na$ for some $a\in \{a_1,...,a_n\}$.  Then using the almost commutation property, we have that $h^na$ is close to $(ha^{1/n})^n$, so for $\delta'$ suitably small we get what we want.  Similarly, if $a\in \{a_1,...,a_n\}$, if we write $g=h-1$, then 
$$
h^n(1-h)=(1+g)^n(-g)a=-\sum_{k=0}^n \binom{n}{k} g^{k+1}a,
$$
and again using the almost commutation property, this is close to 
$$
\sum_{k=0}^n \binom{n}{k} (ga^{1/(k+1)})^{k+1},
$$
so we get the right property for $\delta'$ suitably small.  The corresponding property for the intersection is similar, once we realise that for all $n\geq 1$, $h^n(1-h)$ can be written as a polynomial in $h(1-h)$ and $h^2(1-h)$ (proof by induction on $n$, for example): we leave the details of this to the reader.
\end{proof}

The next lemma discusses how approximate ideal structures behave under tensor products.  If $X$ is a subspace of a $C^*$-algebra $A$, recall that we write $X\otimes B$ for the norm closure of the subspace of $A\otimes B$ generated by elementary tensors $x\otimes b$ with $x\in X$ and $b\in B$.

\begin{lemma}\label{exc tens}
Say $A$ is a $C^*$-algebra, and $X$ is a finite-dimensional subspace of $A$.  Then there exists a constant $M_X>0$ depending only on $X$ such that if $(h,C,D)$ is a $\delta$-ideal structure for $X$, and if $B$ is any $C^*$-algebra, then $(h\otimes 1,C\otimes B,D\otimes B)$ is an $M_X\delta$-ideal structure for $X\otimes B$.
\end{lemma}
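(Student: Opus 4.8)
The plan is to verify the three defining conditions of a $\delta'$-ideal structure (with $\delta' = M_X\delta$) directly, using the fact that elementary tensors $x\otimes b$ span a dense subspace of $X\otimes B$ and that all three conditions are stable under norm limits and scale linearly in the relevant norm. First I would fix a finite basis $e_1,\dots,e_k$ of $X$ consisting of nonzero elements, and choose $M_X$ to depend on the ``distortion constant'' of this basis — that is, a constant so that any $x = \sum_j \lambda_j e_j \in X$ satisfies $\sum_j |\lambda_j|\,\|e_j\| \le M_X' \|x\|$ for some $M_X'$ depending only on $X$ (finite-dimensionality makes all norms on the coefficient space equivalent). This is the only place the specific geometry of $X$ enters, and it is what forces the constant $M_X$ rather than $\delta$ itself.

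The core computation is condition \eqref{decom in prop} (and \eqref{decom int in} is entirely analogous): given $x\otimes b$ with $x\in X$, we write $h(x\otimes b) = (hx)\otimes b$, and since $hx \ine[\delta\|x\|] C$ there is $c\in C$ with $\|hx - c\| \le \delta\|x\|$; then $c\otimes b \in C\otimes B$ and $\|(hx)\otimes b - c\otimes b\| = \|hx-c\|\,\|b\| \le \delta\|x\|\,\|b\|$. For a general element of the algebraic tensor product $\sum_i x_i\otimes b_i$, I would instead expand each $x_i$ in the basis and reassemble: writing $z = \sum_j e_j\otimes b_j'$ with suitable $b_j'\in B$, one has $\|b_j'\|$ controlled (via the distortion constant) by $\|z\|$, and then $h z$ is within $\sum_j \delta\|e_j\|\,\|b_j'\| \le \delta M_X \|z\|$ of an element of $C\otimes B$. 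Condition \eqref{decom com prop}, $\|[h\otimes 1, z]\| = \|[h,\cdot]\otimes 1$ applied termwise$\|$, is handled the same way: $[h\otimes 1, x\otimes b] = [h,x]\otimes b$ has norm $\le \delta\|x\|\,\|b\|$, and the basis expansion upgrades this to $\le M_X\delta\|z\|$ for $z$ in the algebraic tensor product. Finally I would pass to the closure $X\otimes B$: the sets $C\otimes B$ and $(C\cap D)\otimes B$ are norm-closed, and both the $\epsilon$-in relation (with a fixed tolerance that is continuous in $\|z\|$) and the commutator norm are continuous, so all three estimates survive the limit. One should also note $h\otimes 1$ is a positive contraction in $M(A\otimes B)$ since $h$ is one in $M(A)$.

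The main obstacle — really the only subtlety — is bookkeeping the passage from elementary tensors to general elements of $X\otimes B$ while keeping the constant dependent only on $X$: naively estimating on an elementary tensor $x\otimes b$ gives a clean bound with constant $1$, but an arbitrary element of $X\otimes B$ need not be a single elementary tensor, and a sum $\sum x_i\otimes b_i$ could have many terms with large cancellation in $\|z\|$. Fixing the basis of $X$ resolves this, since then every element of the algebraic tensor product $X\odot B$ has a \emph{canonical} finite expression $\sum_{j=1}^k e_j\otimes b_j'$ with exactly $k$ terms and $\|b_j'\|\le M_X'\|z\|$, after which the estimate is uniform in the number of original summands and in $B$. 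Everything else is routine.
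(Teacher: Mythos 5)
Your proposal is correct and follows essentially the same route as the paper: fix a basis of $X$, bound the $B$-valued coefficients of an element of $X\odot B$ by its norm, estimate each of conditions \eqref{decom com prop}--\eqref{decom int in} termwise on a basis expansion, and pass to the closure. The one step you leave informal --- that $\|b_j'\|$ is controlled by $\|z\|$ ``via the distortion constant'' --- is exactly what the paper makes precise by choosing dual functionals $\phi_1,\dots,\phi_n\in A^*$ with $\phi_i(x_j)=\delta_{ij}$ and writing $b_j'=(\phi_j\otimes\mathrm{id})(z)$, so that $\|b_j'\|\le\|\phi_j\|\,\|z\|$ by boundedness of slice maps on the spatial tensor product; your scalar distortion constant is equivalent in spirit but does not directly give the $B$-valued bound without this observation.
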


\begin{proof}
Let $x_1,...,x_n$ be a basis for $X$ consisting of unit vectors, and let $\phi_1,...,\phi_n\in A^*$ be linear functionals dual to this basis, so $\phi_i(x_j)=\delta_{ij}$ (here $\delta_{ij}$ is the Kronecker $\delta$ function).  Let $M=\max_{i=1}^n \|\phi_i\|$.  We claim that $M_X:=nM$ has the property required by the lemma.  Note first that any $a\in X\otimes B$ can be written 
$$
a=\sum_{i=1}^n x_i\otimes b_i
$$
for some unique $b_1,...,b_n\in B$, and that we have for each $i$ 
$$
\|b_i\|=\|(\phi_i\otimes \text{id})(a)\|\leq \|\phi_i\|\|a\|\leq M\|a\|.
$$ 

To see property \eqref{decom com prop}, note that for any $a=\sum_{i=1}^n x_i\otimes b_i\in X\otimes B$ we have
$$
\|[h\otimes 1,a]\|\leq \sum_{i=1}^n \|[h,x_i]\otimes b_i\|\leq \sum_{i=1}^n \delta\|x_i\|\|b_i\|\leq \delta nM\|a\|.
$$
To see properties \eqref{decom in prop} and \eqref{decom int in}, let us look at $ha$ for some $a\in X\otimes B$; the cases of $(1-h)a$, $h(1-h)a$, and $h^2(1-h)a$ are similar. For each each $i\in \{1,...,n\}$ choose $c_i\in C$ with $\|hx_i-c_i\|< \delta$.  Then if $a=\sum_{i=1}^n x_i\otimes b_i \in X\otimes B$ is as above and if $c=\sum_{i=1}^n c_i\otimes b_i\in C\otimes B$ we have 
$$
\|(h\otimes 1)a-c\|\leq \sum_{i=1}^n \|(hx_i-c_i)\otimes b_i\|\leq  \sum_{i=1}^n \delta\|b_i\|\leq \delta nM\|a\|,
$$
which completes the proof.
\end{proof}

\begin{corollary}\label{tp cor}
Say $A$ and $B$ are $C^*$-algebras and $X$ is a finite-dimensional subspace of $A\otimes B$.  Then for any $\delta>0$ there exists a finite-dimensional subspace $Y$ of $A$ and $\delta'>0$ such that if $(h,C,D)$ is a $\delta'$-ideal structure for $Y$, then $(h\otimes 1_B, C\otimes B,D\otimes B)$ is a $\delta$-ideal structure for $X$.
\end{corollary}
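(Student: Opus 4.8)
The plan is to present $X$ as an ``almost subspace'' of $Y\otimes B$ for a suitable finite-dimensional $Y\subseteq A$, apply Lemma~\ref{exc tens} to obtain an ideal structure on $Y\otimes B$, and then transfer it to $X$ by a perturbation argument.

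First I would fix a basis $z_1,\dots,z_k$ of $X$ consisting of unit vectors, together with functionals $\psi_1,\dots,\psi_k\in X^*$ dual to it (so $\psi_i(z_j)=\delta_{ij}$), and set $L=\max_i\|\psi_i\|$, so that every $x\in X$ satisfies $x=\sum_j\psi_j(x)z_j$ with $|\psi_j(x)|\le L\|x\|$. Given $\delta>0$, choose $\eta>0$ with $kL\eta\le\delta/4$; crucially, this constraint refers only to $\delta$ and to data intrinsic to $X$. Since the algebraic tensor product $A\odot B$ is dense in $A\otimes B$, choose for each $j$ a finite sum of elementary tensors $z_j'=\sum_i a_{ji}\otimes b_{ji}$ with $\|z_j-z_j'\|\le\eta$, and let $Y:=\operatorname{span}\{a_{ji}\}$, a finite-dimensional subspace of $A$. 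Let $M_Y$ be the constant furnished by Lemma~\ref{exc tens} applied to $Y$, and finally choose $\delta'>0$ with $\delta'\le\delta$ and $M_Y\delta'\bigl(1+\tfrac{\delta}{4}\bigr)\le\delta/2$.

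Now suppose $(h,C,D)$ is a $\delta'$-ideal structure for $Y$. By Lemma~\ref{exc tens}, $(h\otimes 1,\,C\otimes B,\,D\otimes B)$ is an $M_Y\delta'$-ideal structure for $Y\otimes B$. Given $x\in X$, put $x':=\sum_j\psi_j(x)z_j'\in Y\otimes B$; then $\|x-x'\|\le\sum_j|\psi_j(x)|\,\|z_j-z_j'\|\le kL\eta\|x\|\le\tfrac{\delta}{4}\|x\|$ and $\|x'\|\le(1+\tfrac{\delta}{4})\|x\|$. For each of the three conditions of Definition~\ref{decomp} I would split the relevant expression (for instance $[h\otimes 1,x]$, or $(h\otimes 1)x$, or $(h\otimes 1)(1-h\otimes 1)x$ and $(h\otimes 1)^2(1-h\otimes 1)x$) as the operator applied to $x-x'$ plus the operator applied to $x'$. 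The first summand is bounded by a fixed multiple of $\|x-x'\|$, using $\|h\|\le1$, $\|1-h\|\le1$ and $\|h-h^2\|,\|h^2-h^3\|\le1$; the second summand inherits the required estimate from the $M_Y\delta'$-ideal structure on $Y\otimes B$, where one also uses $(C\cap D)\otimes B\subseteq(C\otimes B)\cap(D\otimes B)$ for spatial tensor products. Summing the two contributions and invoking $kL\eta\le\delta/4$ and $M_Y\delta'(1+\delta/4)\le\delta/2$ yields, in each case, a bound of the form $\le\delta\|x\|$, which is precisely what is needed for $(h\otimes 1_B,\,C\otimes B,\,D\otimes B)$ to be a $\delta$-ideal structure for $X$.

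This argument is essentially bookkeeping rather than a substantial theorem. The one point that requires genuine care is the order of quantifiers: $Y$, and hence the constant $M_Y$, depends on $\eta$, so I must ensure that the constraint imposed on $\eta$ involves only $\delta$ and $X$, deferring every constraint involving $M_Y$ to the final choice of $\delta'$. A secondary subtlety is that the perturbation $\|x-x'\|$ must be controlled proportionally to $\|x\|$ rather than by an absolute constant; this is exactly what the bounded dual functionals $\psi_j$ provide, and they are available because $X$ is finite-dimensional.
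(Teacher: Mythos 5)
Your argument is correct and takes essentially the same route as the paper: approximate the finite-dimensional space $X$ by a subspace of $Y\otimes B$ for a suitable finite-dimensional $Y\subseteq A$, invoke Lemma~\ref{exc tens} on $Y$, and finish with a perturbation estimate. The paper packages the approximating step via compactness of the unit sphere of $X$ rather than via a basis with dual functionals, but that is a cosmetic difference; if anything, your version is slightly more explicit with the constants.
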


\begin{proof}
As the unit sphere of $X$ is compact, there is a finite dimensional subspace $Y$ of $A$ such that for any $x$ in the unit sphere of $X$ there exists $y$ in the unit sphere of $Y\otimes B$ such that $\|y-x\|<\delta/2$.  Let $M_Y$ be as in Lemma \ref{exc tens}, and let $\delta'=\delta/(2M_Y)$.  Lemma \ref{exc tens} implies that if $(h,C,D)$ is a $\delta'$-ideal structure for $Y$ then $(h\otimes 1,C\otimes B,D\otimes B)$ is a $\delta$-ideal structure for $X$.
\end{proof}

For the remainder of this section, we will apply Lemma \ref{exc tens} to tensor products $M_n(A)=A\otimes M_n(\C)$ without further comment.  We will also abuse notation, writing things like `$hu$' for an element $u\in M_n(A)$, when we really mean `$(h\otimes 1_n)u$'.   

The next proposition is the key technical result of this section.  It says that we can use approximate ideal structures to build boundary classes as in Definition \ref{bound class}.  For the statement, recall the notion of a $(\epsilon,c,C,D)$-lift from Definition \ref{lift def} above.

\begin{proposition}\label{v lem}
Let $A$ be a $C^*$-algebra and let $\kappa\in K_1(A)$ be a $K_1$-class.  Then there exist $n$ and an invertible element $u\in M_n(\widetilde{A})$, $c>0$, and a finite-dimensional subspace $X$ of $A$ such that for any $\epsilon>0$ there exists $\delta>0$ such that the following hold.
\begin{enumerate}[(i)]
\item The class $[u]$ equals $\kappa$.
\item If $(h,C,D)$ is a $\delta$-ideal structure of $X$, and if $a=h+(1-h)u$ and $b=h+u^{-1}(1-h)$ then
$$
v:=\begin{pmatrix} 1 & a \\ 0 & 1 \end{pmatrix}\begin{pmatrix} 1 & 0 \\ -b & 1 \end{pmatrix}\begin{pmatrix} 1 & a \\ 0 & 1 \end{pmatrix}\begin{pmatrix} 0 & -1 \\ 1 & 0 \end{pmatrix}
$$ 
is an $(\epsilon,c,C,D)$-lift for $u$.
\end{enumerate}
\end{proposition}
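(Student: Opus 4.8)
The plan is to verify each of the five conditions (\ref{lift1})--(\ref{lift5}) of Definition \ref{lift def} for the explicit element $v$. First I would fix the setup: choose $n$ and the invertible $u\in M_n(\widetilde A)$ representing $\kappa$ with $\|u\|,\|u^{-1}\|$ bounded, and let $X\subseteq A$ be a finite-dimensional subspace containing the matrix entries of $u-1$ and $u^{-1}-1$ (after the usual abuse of identifying $M_n(A)=A\otimes M_n(\C)$ and invoking Lemma \ref{exc tens}). Actually, since the conditions below involve various powers of $h$ multiplying these entries, I would first apply Lemma \ref{tw lem} to enlarge $X$ so that for a $\delta'$-ideal structure on $X$ one automatically controls $h^m a$, $h^m(1-h)a$ etc. into $C$, $D$, $C\cap D$ for $a$ ranging over the relevant generating set and $m$ up to some fixed bound (say $3$ or $4$, enough for the $h^2(1-h)$ terms that will appear). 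The constant $c$ will be chosen at the end to dominate $\|v\|$ and $\|v^{-1}\|$ uniformly; note $v$ is a product of four elementary/permutation matrices, and $\|a\|,\|b\|\leq 1+\|u\|$ or $1+\|u^{-1}\|$ roughly, since $h,1-h$ are contractions, so such a $c$ exists depending only on $\|u\|,\|u^{-1}\|$.

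Next, the computations. Condition (\ref{lift1}) is immediate from the displayed bounds on $a,b$ and the factored form of $v$. For (\ref{lift2}): expand $v$; its entries are polynomials in $a$, $b$. Using $a=h+(1-h)u$ and $b=h+u^{-1}(1-h)$, each entry is, modulo the commutator error $\|[h,\cdot]\|<\delta'$, a sum of terms each of which is $(1-h)$ times something (coming from the $(1-h)u$ or $u^{-1}(1-h)$ pieces) plus a scalar polynomial in $h$; the point is that $v$ should lie $\epsilon$-close to $M_{2n}(\widetilde D)$ because $(1-h)x$ is $\delta'$-in $D$ and $h$ itself, being a multiplier, together with $1$ generates elements of $\widetilde D$ after multiplication. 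Here I would be careful: the clean way is to observe that $v$ equals, up to $\delta'$-errors from commuting $h$ past things, a matrix all of whose entries are of the form $p(h) + (1-h)(\text{word in } h, u, u^{-1})$ with $p$ a scalar polynomial, and then use that $p(h)\in\widetilde D$ trivially while each $(1-h)(\cdots)$ term is $\delta'\|\cdot\|$-in $D$. For (\ref{lift3}), right-multiplying $v$ by $\mathrm{diag}(u^{-1},u)$: the algebra is designed (this is exactly the classical Mayer--Vietoris $v$) so that the $u$'s cancel against the $(1-h)u$ halves leaving everything built from $h x$, which is $\delta'\|x\|$-in $C$; so the product is $\epsilon$-in $M_{2n}(\widetilde C)$. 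For (\ref{lift4}), $v\,\mathrm{diag}(1,0)\,v^{-1}$: here I expect the relevant entries to be (up to errors) built from $h(1-h)$ and $h^2(1-h)$ times words in $u^{\pm1}$ — which is precisely why Definition \ref{decomp}(\ref{decom int in}) asks for exactly those two products into $C\cap D$, and why Lemma \ref{tw lem} was invoked — so this idempotent is $\epsilon$-in $M_{2n}(\widetilde{C\cap D})$.

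Finally (\ref{lift5}): the class $\big\{v\,\mathrm{diag}(1,0)\,v^{-1}\big\}_{\widetilde{C\cap D}}-[\mathrm{diag}(1,0)]$ lies a priori in $K_0(\widetilde{C\cap D})$, and I must show it is in the image of $K_0(C\cap D)$. The standard argument is to apply the canonical split surjection $K_0(\widetilde{C\cap D})\to K_0(\C)=\Z$ (evaluation at the adjoined unit / the scalar part) and check the difference maps to $0$: the scalar part of $v$ is a fixed invertible scalar matrix (each of the four factors has scalar part equal to itself with $h\mapsto 1$, $u\mapsto 1$, i.e. $\begin{pmatrix}1&1\\0&1\end{pmatrix}\begin{pmatrix}1&0\\-1&1\end{pmatrix}\begin{pmatrix}1&1\\0&1\end{pmatrix}\begin{pmatrix}0&-1\\1&0\end{pmatrix}$, a specific element of $GL_{2n}(\C)$), so the scalar part of $v\,\mathrm{diag}(1,0)\,v^{-1}$ is conjugate to $\mathrm{diag}(1,0)$ and hence has the same rank, giving $0$ in $\Z$. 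Thus the class descends to $K_0(C\cap D)$. I would also remark that since the ``$\{\cdot\}$'' notation requires its argument to be genuinely $\delta$-close to $M_{2n}(\widetilde{C\cap D})$, conditions (\ref{lift4}) and (\ref{lift5}) must be read in tandem, and everything is consistent once $\delta$ is chosen small relative to $\epsilon$ and $c$ via Lemma \ref{eps in k}.

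\textbf{Main obstacle.} The genuinely delicate point is \emph{not} any single estimate but the bookkeeping in conditions (\ref{lift2})--(\ref{lift4}): one must expand the fourfold product $v$ and the conjugation $v\,\mathrm{diag}(1,0)\,v^{-1}$, push all occurrences of $h$ to convenient positions incurring controlled commutator errors (each $<\delta'$, multiplied by norms bounded in terms of $c$ — these are the ``$\delta_k$'' of Proposition \ref{bound lem}), and recognize that after this rearrangement every term is manifestly of the shape covered by Definition \ref{decomp}(ii) or (iii) (for $C$, $D$, or the two $C\cap D$ conditions respectively). Getting the exponents of $h$ in the $C\cap D$ terms to be exactly those appearing in Lemma \ref{tw lem}(\ref{decom in prop n}) — equivalently, seeing that the ``off-diagonal'' entries of the idempotent $v\,\mathrm{diag}(1,0)\,v^{-1}$ vanish to the right order in $1-h$ and $h$ — is where the specific algebraic form of $a$, $b$, and $v$ is used in an essential way, and is the heart of the proof.
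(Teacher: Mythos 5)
Your proposal follows essentially the same strategy as the paper: fix $X$ via Lemma \ref{tw lem}, bound $\|v\|,\|v^{-1}\|$ from the four-factor formula, and verify conditions \eqref{lift1}--\eqref{lift5} of Definition \ref{lift def} by expanding $v$, pushing $h$ around with controlled commutator errors, and recognizing each entry as a term of the shape covered by Definition \ref{decomp}. Two small remarks. First, you quietly assume $u-1$ and $u^{-1}-1$ have entries in $A$; in general an invertible in $M_n(\widetilde A)$ does not satisfy this, and the paper first replaces $u$ by a homotopic invertible of the form $1+y$ with $y\in M_n(A)$ (using connectedness of $GL_n(\C)$) before choosing $X$ — a needed, if routine, preliminary step you should make explicit. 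Second, your argument for \eqref{lift5} is a genuinely different (and slightly softer) route from the paper's: you apply the scalar quotient $K_0(\widetilde{C\cap D})\to K_0(\C)$ and observe that the scalar part of $v$ is a fixed element of $GL_{2n}(\C)$ (it is in fact $1_{2n}$, though rank-preservation under conjugation suffices as you note). The paper instead observes, via the explicit identity $ba-1\approx (y+z)h(1-h)$ (its Lemma \ref{inv cut}), that $v\,\mathrm{diag}(1,0)\,v^{-1}-\mathrm{diag}(1,0)$ is directly $\delta$-close to $M_{2n}(C\cap D)$ rather than merely to $M_{2n}(\widetilde{C\cap D})$, from which \eqref{lift4} and \eqref{lift5} both drop out at once. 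Both work; the paper's version gives \eqref{lift4} and \eqref{lift5} simultaneously from one computation, while yours splits them and uses a structural fact about unitization. The rest of your proposal — the bookkeeping for \eqref{lift2}--\eqref{lift4} being the heart of the matter, the role of the $h(1-h)$ and $h^2(1-h)$ conditions in Definition \ref{decomp}, the use of Lemma \ref{tw lem} to control higher powers of $h$ — is the right picture; you would need to carry it out explicitly (the paper's formulas \eqref{v split}, \eqref{v u}, \eqref{v 1 v} and Lemma \ref{inv cut} do exactly this) to have a complete proof, but that is the expected gap in a blind proposal.
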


First we have an ancillary lemma.

\begin{lemma}\label{inv cut}
Let $A$ be a $C^*$-algebra and let $u$ be an invertible element of $\widetilde{A}$ such that $u=1+y$ and $u^{-1}=1+z$ with $y,z$ elements of $A$ with norms bounded by some $c>0$.  Let $\delta>0$ and let $h$ be a positive contraction in $M(A)$ such that $\|[h,x]\|\leq \delta\|x\|$ for all $x\in \{y,z\}$.  Define
$$
a:=h+(1-h)u \quad \text{and}\quad b:=h+u^{-1}(1-h).
$$
Then $ba-1$ and $ab-1$ are both within $2(c^2+c)\delta$ of $(y+z)h(1-h)$.
\end{lemma}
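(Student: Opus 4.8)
The plan is to compute $ba$ and $ab$ directly, substituting $a = h + (1-h)u$ and $b = h + u^{-1}(1-h)$, and then to repeatedly use the almost-commutation bound $\|[h,x]\| \le \delta\|x\|$ for $x \in \{y,z\}$ to replace products in the order that makes the $u$'s and $u^{-1}$'s cancel. First I would expand
$$
ba = \big(h + u^{-1}(1-h)\big)\big(h + (1-h)u\big) = h^2 + h(1-h)u + u^{-1}(1-h)h + u^{-1}(1-h)^2 u,
$$
and note that in the \emph{commuting} world (if $h$ were central) this collapses: writing $k := 1-h$, one gets $h^2 + hk u + u^{-1}kh + u^{-1}k^2 u$, and using $u = 1+y$, $u^{-1} = 1+z$ together with $u^{-1}u = 1$, i.e. $z + y + zy = 0$, a short calculation gives $ba - 1 = (y+z)hk$ exactly when everything commutes. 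So the strategy is: (1) do the exact commutative computation to identify the target $(y+z)h(1-h)$, and (2) control the error incurred by the fact that $h$ does \emph{not} commute with $y,z$.

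For step (2), the key observation is that every term in the expansion of $ba$ contains at most two factors among $\{u^{\pm 1}\} = \{1 + y,\ 1 + z\}$ and some polynomial in $h$ of degree at most $2$; expanding $u = 1 + y$, $u^{-1} = 1 + z$ everywhere, each monomial is of the form (polynomial in $h$) times (word in $y,z$ of length $\le 2$) times (polynomial in $h$). To bring such a monomial into a normal form where all the $h$'s are on one side, one commutes $h$ past at most two letters from $\{y,z\}$; each such swap costs at most $\delta$ times the product of the relevant norms, and since $\|y\|,\|z\| \le c$ and $h$ is a contraction, each commutation error is bounded by $c\delta$ or $c^2\delta$. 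Tallying these — there are the terms $h(1-h)y$, $u^{-1}(1-h)h$ (whose $z,h$ and $h$ parts must be reordered), and $u^{-1}(1-h)^2u$ (which contributes the $zy$, $z$, $y$ cross terms) — one sees the total discrepancy between $ba - 1$ and $(y+z)h(1-h)$ is at most $2(c^2 + c)\delta$: roughly, there are two "heavy" rearrangements contributing $c^2\delta$-type errors (from the length-two words $zy$ and from moving $h$ across $u^{-1}$ and then across the trailing $u$) and two lighter ones contributing $c\delta$-type errors, and the stated bound $2(c^2+c)\delta$ absorbs all of them. The computation for $ab$ is symmetric: expanding $ab = h^2 + h(1-h)u^{-1}\cdot$... wait, more precisely $ab = (h + (1-h)u)(h + u^{-1}(1-h)) = h^2 + hu^{-1}(1-h) + (1-h)uh + (1-h)u u^{-1}(1-h)$, and the last term is exactly $(1-h)^2$ since $uu^{-1}=1$, which actually makes $ab$ slightly cleaner; the same normal-ordering estimate applies and yields the same bound.

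The main obstacle is purely bookkeeping: keeping careful track of which monomials appear after full expansion, and verifying that the accumulated commutator errors really do sum to at most $2(c^2+c)\delta$ rather than some larger constant. There is no conceptual difficulty — the identity $z + y + zy = 0 = y + z + yz$ coming from $u^{-1}u = uu^{-1} = 1$ is what forces the exact cancellation, and the only care needed is to commute $h$ through the minimal number of letters and to use $\|h\| \le 1$, $\|1-h\|\le 1$, $\|y\|,\|z\|\le c$ to bound each step. I would organize the write-up by first treating $ba$ in full detail and then remarking that $ab$ is handled identically (indeed slightly more easily), rather than writing both out.
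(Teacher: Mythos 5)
Your strategy is essentially the paper's: expand using $u=1+y$, $u^{-1}=1+z$, exploit the exact identity $y+z+yz=0$ coming from $uu^{-1}=u^{-1}u=1$ to make the commuting-world version collapse to $(y+z)h(1-h)$, and then account for the errors from $[h,y]$ and $[h,z]$. The only organizational difference is that the paper first computes $ab-1=(1-h)yh+hz(1-h)$ \emph{exactly}, shows $\|ab-1-(y+z)h(1-h)\|\le 2c\delta$, and then handles $ba$ by the separate observation that $[a,b]=(1-h)yz(1-h)-z(1-h)^2y$ has norm at most $2c^2\delta$; this packages the $2c\delta$ and $2c^2\delta$ contributions as coming from two clearly separated sources, rather than by tallying commutation errors term by term in both $ab$ and $ba$.

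One pitfall you should watch when carrying out your version: if you expand from the raw forms $a=h+(1-h)u$, $b=h+u^{-1}(1-h)$ and push $h$'s around naively, you are likely to overshoot and obtain $4c\delta+2c^2\delta$ rather than the claimed $2(c^2+c)\delta$, because the redundant splitting $a=h+(1-h)u$ forces extra commutations. The clean way to make the direct attack land on the stated constant is to first observe $a=1+(1-h)y$ and $b=1+z(1-h)$, so that
$$
ba-1=(1-h)y+z(1-h)+z(1-h)^2y
$$
and then only \emph{two} normal-ordering steps are needed (one of cost $\le c\delta$ from $[1-h,y]$, one of cost $\le 2c^2\delta$ from $z[(1-h)^2,y]$), giving $c\delta+2c^2\delta\le 2(c^2+c)\delta$; similarly $ab-1=(1-h)yh+hz(1-h)$ is exact and is within $2c\delta$ of the target. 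So your claim that the error is ``at most $2(c^2+c)\delta$'' is true, but your informal tally (``two heavy, two light'') only closes out if you use the simplified forms of $a$ and $b$ (or the paper's $[a,b]$ shortcut); from the unsimplified expansion the naive count comes out larger.
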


\begin{proof}
Using that $y$ and $z$ commute, we have that 
\begin{align*}
[a,b] & =(1-h)yz(1-h)-z(1-h)^2y\\ & =[(1-h),z]y(1-h)+z(1-h)[y,(1-h)] \\ & =[z,h]y(1-h)+z(1-h)[h,y],
\end{align*}
whence $\|[a,b]\|\leq 2c^2\delta$.  Hence it suffices to show that $ab-1$ is within $2c\delta$ of $h(1-h)(y+z)$.  Using that $yz=-y-z$, we see that 
$$
ab-1=(1-h)yh+hz(1-h)
$$
and using that $\|[y,h]\|\leq \delta\|y\|$ and $\|[z,h]\|\leq \delta\|z\|$, we are done.
\end{proof}

\begin{proof}[Proof of Proposition \ref{v lem}] 
Let $u\in M_n(\widetilde{A})$ be any invertible element such that $[u]=\kappa$.  Using that $GL_n(\C)$ is connected, up to a homotopy we may assume that $u$ and $u^{-1}$ are of the form $1+y$ and $1+z$ respectively with $y,z\in M_n(A)$.  Let $X_0$ be the subspace of $A$ spanned by all matrix entries of all monomials of degree between one and three with entries from $\{y,z\}$.  Let $X$ be as in Lemma \ref{tw lem} for this $X_0$ and $N=4$.  Let then $\epsilon>0$ be given, and let $\delta>0$ be fixed, to be determined by the rest of the proof.  Let $(h,C,D)$ be an $\delta$-ideal structure for $X$.

Throughout the proof, anything called `$\delta_n$' is a constant depending on $X$, $\delta$ and $\max\{\|y\|,\|z\|\}$, and with the property that $\delta_n$ tends to zero as $\delta$ tends to zero (assuming the other inputs are held constant).  Note that Lemma \ref{exc tens} implies that there is $\delta_1$ such that $(h,M_n(C),M_n(D))$ is a $\delta_1$-ideal structure of $M_n(A)$ for all $n$.  We check the properties from Definition \ref{lift def}.  Property \eqref{lift1} is clear from the formula for $v$ (which implies a similar formula for $v^{-1}$).

For property \eqref{lift2}, one computes 
\begin{equation}\label{v split}
v=\begin{pmatrix} a(2-ba) & ab-1 \\ 1-ba & b \end{pmatrix}=\begin{pmatrix} a & 0 \\ 0 & b \end{pmatrix} +\begin{pmatrix} a(1-ba) & ab-1 \\ 1-ba & 0 \end{pmatrix}.
\end{equation}
As $a=1+(1-h)y$ and $b=1+z(1-h)$, we have that $a$ and $b$ are both $\delta_2$-in $M_{n}(\widetilde{D})$ for some $\delta_2$.  Hence also $\begin{pmatrix} a & 0 \\ 0 & b \end{pmatrix}$ is $\delta_2$-in $M_{2n}(\widetilde{D})$.  On the other hand, Lemmas \ref{inv cut} and \ref{tw lem} and the choice of $X$ imply that $1-ba$ and $1-ab$ are $\delta_3$-in $M_{2n}(\widetilde{D})$ for some $\delta_3$.  It follows from this and that $a$ is $\delta_2$-in $M_n(\widetilde{D})$ that $\begin{pmatrix} a(1-ba) & ab-1 \\ 1-ba & 0 \end{pmatrix}$ is $\delta_4$-in $M_{2n}(\widetilde{D})$ for some $\delta_4$.  

For part \eqref{lift3}, we compute 
\begin{equation}\label{v u}
v\begin{pmatrix} u^{-1} & 0 \\ 0 & u^{-1} \end{pmatrix} = \begin{pmatrix} au^{-1} & 0 \\ 0 & bu \end{pmatrix}+\begin{pmatrix} a(1-ba)u^{-1} & (ab-1)u \\ (1-ba)u^{-1} & 0 \end{pmatrix}.
\end{equation}
We have that $au^{-1}=1+hz$ and that $\|bu-(1+yh)\|<\delta_5$ for some $\delta_5$.  Hence the first term in line \eqref{v u} is $\delta_6$-in $M_{2n}(\widetilde{C})$ for some $\delta_6$.  For the second term, using Lemma \ref{inv cut} we have that up to some $\delta_7$, $(1-ba)u^{-1}$ and $(ab-1)u$ equal 
$$
(y+z)h(1-h)(1+z)  \quad \text{and} \quad (y+z)h(1-h)(1+y).
$$
On the other hand $\|a(1-ba)u^{-1}-(1+hz)(y+z)h(1-h)\|<\delta_7$ for some $\delta_8$.   The claim follows from all of this and the choice of $X$.

For parts \eqref{lift4} and \eqref{lift5}, note that 
\begin{align*}
v^{-1} & =\begin{pmatrix} 0 & -1 \\ 1 & 0 \end{pmatrix}\begin{pmatrix} 1 & -a \\ 0 & 1 \end{pmatrix}\begin{pmatrix} 1 & 0 \\ b & 1 \end{pmatrix}\begin{pmatrix} 1 & -a \\ 0 & 1 \end{pmatrix} \\ 
& =\begin{pmatrix} b & 1-ba \\ ab-1 & a(2-ba) \end{pmatrix} \\
& =\begin{pmatrix} b & 0 \\ 0 & a \end{pmatrix} +\begin{pmatrix} 0 & 1-ba \\ ab-1 & a(1-ba) \end{pmatrix}.
\end{align*}
Using this and the formula in line \eqref{v split} we have that $v\begin{pmatrix} 1 & 0 \\ 0 & 0 \end{pmatrix}v^{-1}-\begin{pmatrix} 1 & 0 \\ 0 & 0 \end{pmatrix}$ equals 
\begin{equation}\label{v 1 v}
\begin{pmatrix} ab-1 & 0 \\ 0 & 0 \end{pmatrix} +\begin{pmatrix} a(1-ba)b & 0 \\ (1-ba)b & 0 \end{pmatrix}+\begin{pmatrix} 0 & a(1-ba) \\ 0 & 0 \end{pmatrix}+\begin{pmatrix} 0 & a(1-ba)^2 \\ 0 & (1-ba)^2 \end{pmatrix}.
\end{equation}
Now, using Lemma \ref{inv cut} and the fact that $h$ almost commutes with $y$ and $z$, every term appearing is within some $\delta_9$ of something of the form $(1-h)hp_C(h)q_C(y,z)$, where $p_C$ is a polynomial of degree at most $3$ in $h$ (possibly with a constant term), $q_C$ is a noncommutative polynomial of degree at most $3$ with no constant term, and moreover the coefficients in $p_C$ and $q_D$ are universally bounded.  Hence by choice of $X$, all the terms are $\delta_{10}$-in $M_{n}(C\cap D)$, for some $\delta_{10}$.  This completes the proof.
\end{proof}

We are now ready for the proof of Theorem \ref{pre main} from the introduction.

\begin{theorem}
Say that $A$ admits an approximate ideal structure over a set $\mathcal{C}$ such that for all $(C,D)\in \mathcal{C}$, the $C^*$-algebras $C$, $D$, and $C\cap D$ have trivial $K$-theory.  Then $A$ has trivial $K$-theory.  
\end{theorem}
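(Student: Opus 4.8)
The plan is to prove the $K_1$-statement first, directly from Propositions \ref{v lem} and \ref{bound lem}, and then to deduce the $K_0$-statement by applying the $K_1$-case to the suspension $SA$.

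First I would show $K_1(A)=0$. Fix $\kappa\in K_1(A)$ and apply Proposition \ref{v lem} to obtain $n$, an invertible $u\in M_n(\widetilde A)$ with $[u]=\kappa$, a constant $c>0$ (enlarged if necessary so that $\|u\|\le c$ and $\|u^{-1}\|\le c$), and a finite-dimensional subspace $X\subseteq A$. Choose $\epsilon>0$ small enough that all the $\{\cdot\}$-notations arising below from Lemma \ref{eps in k} and Proposition \ref{bound lem} are defined, and let $\delta>0$ be the resulting constant from Proposition \ref{v lem}, shrunk if necessary so that it also satisfies the conclusion of Proposition \ref{bound lem}. Since $A$ admits an approximate ideal structure over $\mathcal C$, there is a $\delta$-ideal structure $(h,C,D)$ of $X$ with $(C,D)\in\mathcal C$. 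Then $v:=\begin{pmatrix}1&a\\0&1\end{pmatrix}\begin{pmatrix}1&0\\-b&1\end{pmatrix}\begin{pmatrix}1&a\\0&1\end{pmatrix}\begin{pmatrix}0&-1\\1&0\end{pmatrix}$, with $a=h+(1-h)u$ and $b=h+u^{-1}(1-h)$, is an $(\epsilon,c,C,D)$-lift of $u$, so Proposition \ref{bound lem} produces a boundary class $\partial_v u\in K_0(C\cap D)$. As $K_0(C\cap D)=0$ by hypothesis, $\partial_v u=0$, and Proposition \ref{bound lem}(ii) yields $l\in\N$ and an invertible $x\ine M_{n+l}(\widetilde D)$ with $(u\oplus 1_l)x^{-1}\ine M_{n+l}(\widetilde C)$ and $\sigma(\{(u\oplus 1_l)x^{-1}\}_C,\{x\}_D)=[u]$ in $K_1(A)$. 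Since $K_1(C)=K_1(D)=0$, both $\{x\}_D$ and $\{(u\oplus 1_l)x^{-1}\}_C$ vanish, so $\kappa=[u]=\sigma(0,0)=0$. As $\kappa$ was arbitrary, $K_1(A)=0$.

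For $K_0(A)=0$, I would use $K_0(A)\cong K_1(SA)$ with $SA=C_0(\R)\otimes A$, and check that $SA$ satisfies the hypotheses of the theorem. Given $\delta>0$ and a finite-dimensional subspace $X$ of $SA=A\otimes C_0(\R)$, Corollary \ref{tp cor} (applied with $B=C_0(\R)$) gives a finite-dimensional subspace $Y$ of $A$ and $\delta'>0$ such that any $\delta'$-ideal structure $(h,C,D)$ of $Y$ produces a $\delta$-ideal structure $(h\otimes 1,SC,SD)$ of $X$; taking $(C,D)\in\mathcal C$ exhibits an approximate ideal structure for $SA$ over $\mathcal C':=\{(SC,SD):(C,D)\in\mathcal C\}$. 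Identifying $A\otimes C_0(\R)$ with $C_0(\R,A)$ and $C\otimes C_0(\R)$ with $C_0(\R,C)$ shows $SC\cap SD=S(C\cap D)$, and the suspension isomorphism gives $K_*(SC)=K_{*+1}(C)=0$, and likewise for $SD$ and $S(C\cap D)$. Thus $SA$ meets the hypotheses of the theorem, the already-established $K_1$-case gives $K_1(SA)=0$, and hence $K_0(A)=0$. Together with $K_1(A)=0$ this shows $A$ has trivial $K$-theory.

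The substantive content is entirely in Propositions \ref{v lem} and \ref{bound lem}, which are already proved; the present argument is an assembly of these, and the only step needing any care is the passage to the suspension, in particular the identification $SC\cap SD=S(C\cap D)$ that keeps the intersection inside the class of algebras with vanishing $K$-theory.
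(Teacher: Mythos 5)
Your proof is correct and follows essentially the same route as the paper's: build a boundary class $\partial_v u$ via Proposition \ref{v lem}, use $K_0(C\cap D)=0$ and Proposition \ref{bound lem} to push it into the image of $\sigma$, kill it with $K_1(C)=K_1(D)=0$, and then run the identical argument on $SA$ using Corollary \ref{tp cor} and $SC\cap SD=S(C\cap D)$. The only differences are cosmetic — you spell out the $\epsilon$,$\delta$ bookkeeping and the identification $SC\cap SD=S(C\cap D)$ more explicitly than the paper does.
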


\begin{proof}
It suffices to show that $K_1(A)=K_1(SA)=0$.  For $K_1(A)$, let $\alpha\in K_1(A)$ be an arbitrary class.  Then using Proposition \ref{v lem} we may build a boundary class $\partial_v(u)\in K_0(C\cap D)$.  As $K_0(C\cap D)=0$, this class $\partial_v(u)$ is zero.  Hence by Proposition \ref{bound lem} it is in the image of $\sigma:K_1(C)\oplus K_1(D)\to K_1(A)$.  However, $K_1(C)=K_1(D)=0$ by assumption, so we are done with this case.

The case of $K_1(SA)$ is almost the same.  Indeed, Corollary \ref{tp cor} implies that $SA$ admits an approximate ideal structure over the set $\{(SC,SD)\mid (C,D)\in \mathcal{C}\}$, and we have that $SC$, $SD$, and $SC\cap SD=S(C\cap D)$ all have trivial $K$-theory.
\end{proof}

We remark that Theorem \ref{pre main} can be used to simplify the proof of the main theorem of \cite{Guentner:2014bh}, in particular obviating the need for filtrations and controlled $K$-theory in the proof, and replacing the material of \cite[Section 7]{Guentner:2014bh} entirely.

\section{More on boundary classes}\label{mo bound sec}

In this section we collect together some technical results on boundary classes that are needed for the proof of Theorem \ref{main} on the K\"{u}nneth formula.  We state results for Banach algebras when it makes no difference to the proof, and $C^*$-algebras when the proof is simpler in that case.

 The first result corresponds to exactness at position (I) in line \eqref{pre mv} from the introduction.  For the statement, recall the notion of a $(\delta,c,C,D)$-lift from Definition \ref{lift def}, and the map $\iota:K_0(C\cap D)\to K_0(C)\oplus K_0(D)$ from Definition \ref{iota def}.

\begin{proposition}\label{iota lem}
Let $A$ be a Banach algebra and let $C$ and $D$ be Banach subalgebras of $A$.  Assume that $p,q\in M_n(\widetilde{C\cap D})$ are idempotents such that $[p]-[q]\in K_0(C\cap D)$, and so that $\iota([p]-[q])=0$.  

Then there exist $k\in \N$, an invertible element $u$ of $M_{n+k}(\widetilde{A})$, an invertible element $v$ of $M_{2(n+k)}(\widetilde{A})$, and $c>0$ such that for any $\delta>0$, $v$ and $v^{-1}$ are $(\delta,c,C,D)$-lifts of $u$ and $u^{-1}$ respectively, and such that $\partial_vu=[p]-[q]$ and $\partial_{v^{-1}}(u^{-1})=[q]-[p]$.
\end{proposition}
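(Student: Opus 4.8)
The plan is to realise the class $[p]-[q]\in K_0(C\cap D)$ as a boundary class by exhibiting an explicit invertible $u$ over $\widetilde A$ together with an explicit lift $v$, in such a way that the idempotent $v\left(\begin{smallmatrix}1&0\\0&0\end{smallmatrix}\right)v^{-1}$ is \emph{exactly} (not just approximately) equal to $p\oplus 0$ modulo conjugation, so that all the ``$\delta$-in'' conditions in Definition \ref{lift def} hold trivially for \emph{every} $\delta>0$. The point is that here $C$, $D$, and $C\cap D$ play no serious role: $p$ and $q$ already live in $M_n(\widetilde{C\cap D})\subseteq M_n(\widetilde C)\cap M_n(\widetilde D)$, so any algebraic identity among $p$, $q$, and invertibles built from them will automatically land in the relevant subalgebras. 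This is why exactness at position (I) is ``automatic''.

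First I would use the hypothesis $\iota([p]-[q])=0$. This means $[p]=[q]$ in both $K_0(C)$ and $K_0(D)$; after stabilising (replacing $n$ by $n+k$ and adding a common complementary idempotent, using that $[p]-[q]$ lies in the unital kernel $K_0(C\cap D)$ so $p$ and $q$ have the same ``scalar part''), I may assume $p$ and $q$ are Murray--von Neumann equivalent \emph{over $\widetilde C$} and \emph{over $\widetilde D$} — and in fact, passing to a larger matrix size, homotopic, hence unitarily/similarly equivalent via an invertible in $M_{n+k}(\widetilde C)$ and one in $M_{n+k}(\widetilde D)$. Write $s\in M_{n+k}(\widetilde C)$ and $t\in M_{n+k}(\widetilde D)$ for invertibles with $sps^{-1}=q$ and $tpt^{-1}=q$ (possibly after enlarging $k$ so both exist simultaneously). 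Then $u:=t^{-1}s\in M_{n+k}(\widetilde A)$ is invertible and commutes with... no: rather, $u$ conjugates $p$ in a controlled way, $upu^{-1}=t^{-1}(sps^{-1})t = t^{-1}qt = p$ would say $u$ commutes with $p$, which is not quite what I want. The cleaner route: take $u := t^{-1}s$ so that $s = tu$; I want $v$ of the form $\mathrm{diag}(s, \ast)$ (so $v$ is over $\widetilde C$ after the twist by $\mathrm{diag}(u^{-1},u)$) while simultaneously $v$ is over $\widetilde D$. The standard Mayer--Vietoris device is to set, on $M_{2(n+k)}(\widetilde A)$,
\[
v \;:=\; \begin{pmatrix} s & 0 \\ 0 & s^{-1}\end{pmatrix}\begin{pmatrix} 1 & 1-p \\ 0 & 1\end{pmatrix}\begin{pmatrix} 1 & 0 \\ -(1-p) & 1\end{pmatrix}\begin{pmatrix} 1 & 1-p \\ 0 & 1\end{pmatrix}
\]
or a similar rotation-type word; one then checks by direct computation that $v\left(\begin{smallmatrix}1&0\\0&0\end{smallmatrix}\right)v^{-1}$ conjugates to $q\oplus 0$, that $v$ times the obvious diagonal twist built from $s,t$ lands in $M_{2(n+k)}(\widetilde C)$ exactly, and that $v$ itself — after the analogous manipulation with $t$ in place of $s$ — lies in $M_{2(n+k)}(\widetilde D)$ exactly. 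All four ``$\delta$-in'' conditions of Definition \ref{lift def} then hold with error $0$, and condition \eqref{lift5} holds because $\{v(\begin{smallmatrix}1&0\\0&0\end{smallmatrix})v^{-1}\}_{\widetilde{C\cap D}} - [1\oplus 0]$ is literally $[q]-[p]$ (or $[p]-[q]$, depending on orientation), which is in $K_0(C\cap D)$ by hypothesis. Finally $\partial_v u = [p]-[q]$ by definition of the boundary class, and running the same argument with the roles of $C$ and $D$ (equivalently $s$ and $t$) swapped, or simply inverting $v$, gives $\partial_{v^{-1}}(u^{-1}) = [q]-[p]$; the bound $c$ is read off from the norms of $s$, $t$, $p$, $q$ and their inverses, which are fixed once $s,t$ are chosen.

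The main obstacle is bookkeeping rather than conceptual: getting the single element $v$ to lie on the nose in \emph{both} $M_{2(n+k)}(\widetilde D)$ \emph{and} (after the diagonal twist by $u^{\pm1}$) in $M_{2(n+k)}(\widetilde C)$ simultaneously, while its conjugate of $\mathrm{diag}(1,0)$ sits in $M_{2(n+k)}(\widetilde{C\cap D})$. This forces a careful choice of the word defining $v$: one wants $v$ built out of $1-p$ (which is in $\widetilde{C\cap D}$) together with a factor $\mathrm{diag}(s,s^{-1})$ on the left — so that $v\in M(\widetilde C)$-ish — but one must also arrange that, after multiplying by $\mathrm{diag}(u^{-1},u) = \mathrm{diag}(s^{-1}t, t^{-1}s)$, the combination collapses into $M(\widetilde D)$. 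The resolution is to exploit $s = tu$: $\mathrm{diag}(s,s^{-1})\cdot\mathrm{diag}(u,u^{-1}) = \mathrm{diag}(su, s^{-1}u^{-1})$... here one must be slightly careful with which twist $\mathrm{diag}(u^{-1},u)$ versus $\mathrm{diag}(u,u^{-1})$ Definition \ref{lift def}\eqref{lift3} actually uses, and match the word to it, possibly conjugating $p\mapsto 1-p$ or reversing orientation to fix signs. I would verify the four identities by brute matrix multiplication (they are $2\times2$ block identities with entries among $p, 1-p, s, s^{-1}, t, t^{-1}$), and these computations, while tedious, are the only real content; everything else is formal. If the single-word approach proves awkward I would instead argue in two steps: first handle the case $q = p$ with $s$ arising from an invertible in $M(\widetilde C)$ commuting with $p$ up to the class, reducing to the ``no change of idempotent'' case, then handle the general $[p]-[q]$ by a rotation homotopy $p\rightsquigarrow q$ inside $M(\widetilde C)$ and separately inside $M(\widetilde D)$, gluing the two along $C\cap D$.
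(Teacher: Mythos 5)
You have the right high-level strategy, which is indeed the paper's: from $\iota([p]-[q])=0$ produce invertibles $u_C\in M_{n+k}(\widetilde C)$ and $u_D\in M_{n+k}(\widetilde D)$ conjugating $p\oplus 1_l$ to $q\oplus 1_l$, then cook up $u$ and $v$ out of $u_C,u_D,p,q$ so that each ``$\delta$-in'' condition of Definition \ref{lift def} is in fact an \emph{exact} membership, valid for all $\delta>0$. The observation that conditions \eqref{lift4}--\eqref{lift5} are automatic because $p,q$ already live in $M_n(\widetilde{C\cap D})$ is also correct. But the execution has two genuine problems, and you acknowledge yourself that the ``bookkeeping'' is unresolved.

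First, your proposed $v$ has $\operatorname{diag}(s,s^{-1})$ with $s\in M_{n+k}(\widetilde C)$ on the left, and you explicitly aim for ``$v\in M(\widetilde C)$-ish \dots after multiplying by $\operatorname{diag}(u^{-1},u)$ the combination collapses into $M(\widetilde D)$.'' This is backwards relative to Definition \ref{lift def}: condition \eqref{lift2} requires $v\in_\delta M_{2n}(\widetilde D)$, and condition \eqref{lift3} requires $v\bigl(\begin{smallmatrix} u^{-1} & 0 \\ 0 & u \end{smallmatrix}\bigr)\in_\delta M_{2n}(\widetilde C)$. Swapping $s\leftrightarrow t$ does not simply fix this: with $v$ built as $\operatorname{diag}(t,t^{-1})$ times a word in $1-p$ and $u:=t^{-1}s$ (or its inverse), the twisted element $v\bigl(\begin{smallmatrix} u^{-1} & 0 \\ 0 & u \end{smallmatrix}\bigr)$ has blocks like $t s^{-1}t$ or $t^{-1}s^{-1}t$, which do not collapse into $M(\widetilde C)$.

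Second, and this is the crux you did not find: the correct $u$ is \emph{not} a simple product of $s$ and $t$; it is the idempotent-weighted combination
\[
u:=(1-p\oplus 1_l)\,u_C^{-1}+(p\oplus 1_l)\,u_D^{-1},
\qquad
u^{-1}=u_C(1-p\oplus 1_l)+u_D(p\oplus 1_l).
\]
With this $u$ one can write
\[
v:=\begin{pmatrix} (p\oplus 1_l)u_D^{-1} & p\oplus 1_l -1 \\ 1-q\oplus 1_l  & u_D(p\oplus 1_l)\end{pmatrix}\in M_{2(n+k)}(\widetilde D),
\]
and a direct computation (using $u_Dp u_D^{-1}=q$, which gives $u_D(1-p)=(1-q)u_D$ etc.) shows that $v\bigl(\begin{smallmatrix} u^{-1} & 0 \\ 0 & u \end{smallmatrix}\bigr)$ has all entries among $p,q,u_C,u_C^{-1}$, hence lies in $M_{2(n+k)}(\widetilde C)$, while $v\bigl(\begin{smallmatrix}1&0\\0&0\end{smallmatrix}\bigr)v^{-1}=\operatorname{diag}(p\oplus 1_l,\,1-q\oplus 1_l)\in M_{2(n+k)}(\widetilde{C\cap D})$, giving $\partial_v u=[p]-[q]$ exactly. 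The single algebraic identity that makes everything collapse is this mixing of $u_C^{-1}$ and $u_D^{-1}$ through $p$, and without it the ``one word $v$ landing in both places'' obstacle you identified is real, not merely tedious; so I would call this a genuine gap rather than routine bookkeeping.

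Your fallback suggestion (split the argument with a rotation homotopy $p\rightsquigarrow q$ inside $M(\widetilde C)$ and separately inside $M(\widetilde D)$, gluing along $C\cap D$) is morally close to what the weighted-sum $u$ accomplishes algebraically, and could be made to work, but as sketched it is not a proof.
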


\begin{proof}
As $\iota([p]-[q])=0$, there exist natural numbers $l\leq k$ and invertible elements $u_C\in M_{n+k}(\widetilde{C})$, $u_D\in M_{n+k}(\widetilde{D})$ such that 
$$
u_C(p\oplus 1_l)u_C^{-1} = q\oplus 1_l = u_D(p\oplus 1_l)u_D^{-1}.
$$
Define
$$
u:=(1-p\oplus 1_l)u_C^{-1}+(p\oplus 1_l)u_D^{-1}\in M_{n+k}(\widetilde{A}).
$$ 
Direct checks that we leave to the reader show that $u$ is invertible with inverse $u^{-1}=u_C(1-p\oplus 1_l)+u_D(p\oplus 1_l)$.  Define now
$$
v:=\begin{pmatrix} (p\oplus 1_l)u_D^{-1} & p\oplus 1_l -1 \\ 1-q\oplus 1_l  & u_D(p\oplus 1_l)\end{pmatrix}\in M_{2(n+k)}(\widetilde{D}).
$$
Note that $v$ is invertible: indeed, direct computations show that
$$
v^{-1}:=\begin{pmatrix} u_D(p\oplus 1_l) & 1-q\oplus 1_l  \\ 1-p_n\oplus 1_l & (p\oplus 1_l)u_D^{-1}\end{pmatrix}.
$$
We also compute that 
\begin{align*}
v\begin{pmatrix} u^{-1} & 0 \\ 0 & u \end{pmatrix} = \begin{pmatrix} p\oplus 1_l & (1-p\oplus 1_l)u_C^{-1} \\  u_C(1-p\oplus 1_l) & q\oplus 1_l \end{pmatrix},
\end{align*}
which is an element of $M_{2(n+k)}(\widetilde{C})$, so at this point we have properties \eqref{lift1}, \eqref{lift2}, and \eqref{lift3} from Definition \ref{lift def}.  

To complete the proof, we compute using the formulas above for $v$ and $v^{-1}$ that 
$$
v\begin{pmatrix} 1 & 0 \\ 0 & 0 \end{pmatrix} v^{-1}=\begin{pmatrix} p\oplus 1_l & 0 \\ 0 & 1-q\oplus 1_l \end{pmatrix},
$$
which is in $M_{2(n+k)}(\widetilde{C\cap D})$.  Moreover, as a class in $K_0(\widetilde{C\cap D})$,
$$
\Big[v\begin{pmatrix} 1 & 0 \\ 0 & 0 \end{pmatrix} v^{-1}\Big]-\begin{bmatrix} 1 & 0 \\ 0 & 0 \end{bmatrix} = [p]-[q],
$$
so in particular this class is in $K_0(C\cap D)$, completing the proof that $v$ satisfies the conditions from Definition \ref{lift def}, and that $\partial_v(u)=[p]-[q]$.

The computations with $v^{-1}$ and $u^{-1}$ replacing $v$ and $u$ are similar: we leave them to the reader.
\end{proof}

The proof of the next lemma consists entirely of direct checks; we leave these to the reader.

\begin{lemma}\label{block lem}
Let $A$ be a Banach algebra, let $c>0$, and let $\epsilon \in(0,\frac{1}{4c+6})$.  Let $\delta>0$ satisfy the conclusion of Proposition \ref{bound lem}.  Assume that for $i\in \{1,...,m\}$, there is an invertible element $u_i\in M_{n_i}(\widetilde{A})$ such that $\|u_i\|\leq c$ and $\|u_i^{-1}\|\leq c$, and let $C$ and $D$ be Banach subalgebras of $A$ such that for each $i$ there is a $(\delta,c,C,D)$-lift $v_i$ of $u_i$.  Let $s\in M_{2(n_1+\cdots +n_m)}$ be the self-inverse permutation matrix defined by the following diagram in the sizes of the matrix blocks
$$
\xymatrix{ n_1 \ar[d] & n_1 \ar[drrr] & n_2 \ar[dl] & n_2 \ar[drr] & \cdots & \cdots  & n_m \ar[dlll] & n_m \ar[d] \\  
n_1 \ar[u] & n_2\ar[ur] & \cdots & n_m \ar[urrr] & n_1\ar[ulll] & n_2 \ar[ull] & \cdots & n_m \ar[u] }
$$
and define 
$$
v_1 \boxplus \cdots \boxplus v_m:=s(v_1\oplus \cdots \oplus v_m)s
$$
Then $v:=v_1 \boxplus \cdots \boxplus v_m$ is a $(\delta,c,C,D)$-lift of $u:=u_1\oplus \cdots \oplus u_m$, and 
$$
\partial_vu=\sum_{i=1}^n \partial_{v_i}(u_i)
$$
in $K_0(C\cap D)$. \qedhere
\end{lemma}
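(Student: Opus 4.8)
The plan is to reduce the whole lemma to a single bookkeeping identity about the shuffle permutation $s$, combined with the standard stability of norms, of being $\delta$-in a $C^*$-subalgebra, and of $K_0$-classes of idempotents, under direct sums. Reading off the diagram, $s$ is the self-inverse permutation matrix interchanging the ``interleaved'' block ordering $(n_1,n_1,n_2,n_2,\dots,n_m,n_m)$ with the ``concatenated'' ordering $(n_1,\dots,n_m,n_1,\dots,n_m)$; write $N:=n_1+\cdots+n_m$. The features of $s$ I would record are: $s$ is unitary with entries in $\{0,1\}$, so it lies in $M_{2N}(\widetilde{E})$ for each $E\in\{C,D,C\cap D\}$, conjugation by $s$ is isometric, and conjugation by $s$ fixes $K_0$-classes of idempotents; and conjugation by $s$ carries a block-diagonal matrix $\bigoplus_{i=1}^m\begin{pmatrix} p_i & q_i \\ r_i & t_i\end{pmatrix}$, with $i$-th summand of size $2n_i$, to the $2\times2$ block matrix $\begin{pmatrix} \bigoplus_i p_i & \bigoplus_i q_i \\ \bigoplus_i r_i & \bigoplus_i t_i\end{pmatrix}$ of size $2N$.

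Applying this last identity to three inputs supplies everything. Applied to the $v_i$ themselves it realizes $v=s(v_1\oplus\cdots\oplus v_m)s$ as a $2\times2$ block matrix whose $(j,k)$-entry is $\bigoplus_i (v_i)_{jk}$, and likewise $v^{-1}=s(v_1^{-1}\oplus\cdots\oplus v_m^{-1})s$; applied to $\bigoplus_i\begin{pmatrix} u_i^{-1} & 0 \\ 0 & u_i\end{pmatrix}$ it produces $\begin{pmatrix} u^{-1} & 0 \\ 0 & u\end{pmatrix}$ with $u:=u_1\oplus\cdots\oplus u_m$; and applied to $\bigoplus_i\begin{pmatrix} 1_{n_i} & 0 \\ 0 & 0_{n_i}\end{pmatrix}$ it produces $\begin{pmatrix} 1_N & 0 \\ 0 & 0_N\end{pmatrix}$. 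Combining these with $s^2=1$ gives $v\begin{pmatrix} 1 & 0 \\ 0 & 0\end{pmatrix}v^{-1}=s\big(\bigoplus_i v_i\begin{pmatrix} 1 & 0 \\ 0 & 0\end{pmatrix}v_i^{-1}\big)s$, $v\begin{pmatrix} u^{-1} & 0 \\ 0 & u\end{pmatrix}=s\big(\bigoplus_i v_i\begin{pmatrix} u_i^{-1} & 0 \\ 0 & u_i\end{pmatrix}\big)s$, and analogous formulas for the other quantities appearing in Definition \ref{lift def}.

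With these reductions in hand I would verify clauses \eqref{lift1}--\eqref{lift5} of Definition \ref{lift def} for $v$ as a $(\delta,c,C,D)$-lift of $u$. Clause \eqref{lift1}: $v$ is invertible because each $v_i$ is, and $\|v\|=\|v_1\oplus\cdots\oplus v_m\|=\max_i\|v_i\|\le c$, and likewise $\|v^{-1}\|\le c$. Clauses \eqref{lift2}--\eqref{lift4}: a direct sum of elements each $\delta$-in $M_{2n_i}(\widetilde{E})$ is $\delta$-in $M_{2N}(\widetilde{E})$ (the block-diagonal sum of elements of the various $M_{2n_i}(\widetilde{E})$ again lies in $M_{2N}(\widetilde{E})$), and conjugation by $s$ preserves being $\delta$-in; so the three products displayed above inherit the required $\delta$-in property straight from the hypothesis that each $v_i$ is a $(\delta,c,C,D)$-lift of $u_i$. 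Clause \eqref{lift5}: since $\{\,\cdot\,\}_{\widetilde{C\cap D}}$ of a direct sum of idempotents is the sum of their classes (approximate each summand block-diagonally) and is unchanged under conjugation by $s$, we obtain $\big\{v\begin{pmatrix} 1 & 0 \\ 0 & 0\end{pmatrix}v^{-1}\big\}_{\widetilde{C\cap D}}=\sum_i\big\{v_i\begin{pmatrix} 1 & 0 \\ 0 & 0\end{pmatrix}v_i^{-1}\big\}_{\widetilde{C\cap D}}$; subtracting the identity $\begin{bmatrix} 1_N & 0 \\ 0 & 0_N\end{bmatrix}=\sum_i\begin{bmatrix} 1_{n_i} & 0 \\ 0 & 0_{n_i}\end{bmatrix}$ in $K_0(\widetilde{C\cap D})$ then gives $\partial_v u=\sum_i\partial_{v_i}(u_i)$, and since each $\partial_{v_i}(u_i)$ lies in $K_0(C\cap D)$ by clause \eqref{lift5} for $v_i$, so does the sum; this verifies \eqref{lift5} and, at the same time, the displayed formula for $\partial_v u$. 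The only delicate point---and the closest thing to an obstacle in an otherwise mechanical argument---is the index bookkeeping: confirming that the permutation drawn in the diagram really implements the interleave-versus-concatenate reshuffle, and that the \emph{same} $s$ simultaneously conjugates $v$, $\begin{pmatrix} u^{-1} & 0 \\ 0 & u\end{pmatrix}$, and $\begin{pmatrix} 1 & 0 \\ 0 & 0\end{pmatrix}$ into the desired block forms. Once that is nailed down, each clause is a one-line consequence, which is presumably why the paper leaves it to the reader.
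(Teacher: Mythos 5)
Your proposal is correct and spells out exactly the direct checks that the paper elects to leave to the reader: identifying $s$ as the interleave-to-concatenate shuffle, using $s$-conjugation to reduce each of clauses \eqref{lift1}--\eqref{lift5} of Definition \ref{lift def} to direct sums of the corresponding data for the $v_i$, and recovering the additivity of the boundary class from additivity of $K_0$-classes under block sums. Since the paper's proof is literally ``direct checks; we leave these to the reader,'' there is no divergence in approach to report.
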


We conclude this section with a technical result on inverses that we will need later.

\begin{lemma}\label{bound lem inv}
Assume that the assumptions of Proposition \ref{v lem} are satisfied.  Then on shrinking $\delta$, we may assume that $v^{-1}$ is also an $(\epsilon,c,C,D)$-lift of $u^{-1}$, and moreover that 
$$
\partial_v (u) = -\partial_{v^{-1}}(u^{-1})
$$ 
as elements of $K_0(C\cap D)$.
\end{lemma}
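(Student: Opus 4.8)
The plan is to deduce the whole statement from the single algebraic identity $v^{-1}=svs$, where $s:=\begin{pmatrix} 0 & 1_n \\ 1_n & 0 \end{pmatrix}\in M_{2n}(\C)$ is the block-flip matrix. Indeed, the proof of Proposition~\ref{v lem} records the explicit formulas $v=\begin{pmatrix} a(2-ba) & ab-1 \\ 1-ba & b \end{pmatrix}$ and $v^{-1}=\begin{pmatrix} b & 1-ba \\ ab-1 & a(2-ba) \end{pmatrix}$, and comparing these immediately gives $v^{-1}=svs$ (hence also $v=sv^{-1}s$, as $s^2=1$). Since $s$ is a scalar permutation matrix it lies in $M_{2n}(\widetilde{E})$ for each $E\in\{C,D,C\cap D\}$, is its own inverse, and has norm one; so conjugation by $s$ preserves norms, preserves the relation ``is $\delta$-in $M_{2n}(\widetilde{E})$'', and preserves $K_0$-classes of idempotents lying in $M_{2n}(\widetilde{E})$. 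I will also harmlessly enlarge the constant $c$ from Proposition~\ref{v lem} so that it bounds $\|u\|$ and $\|u^{-1}\|$ as well.

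Next I would verify conditions \eqref{lift1}--\eqref{lift4} of Definition~\ref{lift def} for $v^{-1}$ as a candidate $(\epsilon,c,C,D)$-lift of $u^{-1}$, remembering that $(u^{-1})^{-1}=u$. Condition \eqref{lift1}, namely $\|v^{-1}\|\le c$ and $\|v\|\le c$, is part of the hypothesis on $v$. Condition \eqref{lift2} holds since $v^{-1}=svs$ and $v$ is $\delta$-in $M_{2n}(\widetilde{D})$. For \eqref{lift3} one computes $v^{-1}\begin{pmatrix} u & 0 \\ 0 & u^{-1} \end{pmatrix}=s\Big(v\begin{pmatrix} u^{-1} & 0 \\ 0 & u \end{pmatrix}\Big)s$, which is $\delta$-in $M_{2n}(\widetilde{C})$ by condition \eqref{lift3} for $v$. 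For \eqref{lift4}, writing $p:=v\begin{pmatrix} 1 & 0 \\ 0 & 0 \end{pmatrix}v^{-1}$, a short computation using $v=sv^{-1}s$ gives
$$v^{-1}\begin{pmatrix} 1 & 0 \\ 0 & 0 \end{pmatrix}v=s\Big(v\begin{pmatrix} 0 & 0 \\ 0 & 1 \end{pmatrix}v^{-1}\Big)s=s(1-p)s ;$$
since $p$ is $\delta$-in $M_{2n}(\widetilde{C\cap D})$, so is $1-p$, and hence so is $s(1-p)s$.

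Finally I would compute the $K$-theory class, which simultaneously gives condition \eqref{lift5} and the displayed formula. Let $f\in M_{2n}(\widetilde{C\cap D})$ be an idempotent with $\|f-p\|<\epsilon$, which exists because $v$ is an $(\epsilon,c,C,D)$-lift of $u$; then $s(1-f)s=1-sfs$ is an idempotent in $M_{2n}(\widetilde{C\cap D})$ within $\epsilon$ of $s(1-p)s$, so, using that conjugation by $s$ preserves $K_0$-classes (whence $[sfs]=[f]$) and that $[1-sfs]=[1_{2n}]-[sfs]$, one gets $\{s(1-p)s\}_{\widetilde{C\cap D}}=[1_{2n}]-[f]=2\begin{bmatrix} 1 & 0 \\ 0 & 0 \end{bmatrix}-\{p\}_{\widetilde{C\cap D}}$ in $K_0(\widetilde{C\cap D})$, the last equality using $[1_{2n}]=2\begin{bmatrix} 1 & 0 \\ 0 & 0 \end{bmatrix}$. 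Hence
$$\partial_{v^{-1}}(u^{-1})=\{s(1-p)s\}_{\widetilde{C\cap D}}-\begin{bmatrix} 1 & 0 \\ 0 & 0 \end{bmatrix}=\begin{bmatrix} 1 & 0 \\ 0 & 0 \end{bmatrix}-\{p\}_{\widetilde{C\cap D}}=-\partial_v(u),$$
which in particular lies in $K_0(C\cap D)$, establishing \eqref{lift5}. The phrase ``on shrinking $\delta$'' is only needed to keep all constants in Definition~\ref{eps in k def} and Lemma~\ref{eps in k} aligned; in fact $\|v^{-1}\begin{pmatrix} 1 & 0 \\ 0 & 0 \end{pmatrix}v\|\le\|v^{-1}\|\,\|v\|\le c^2$, the same bound that makes $\{p\}_{\widetilde{C\cap D}}$ well defined, and every estimate for $v^{-1}$ is the corresponding estimate for $v$ transported by conjugation with the norm-one matrix $s$, so essentially no shrinking beyond what already makes $v$ a lift of $u$ is required. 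Consequently there is no serious obstacle here: the entire content is the flip identity $v^{-1}=svs$, the only mildly delicate point being that conjugation by the invertible (but non-central) matrix $s$ fixes $K_0$-classes of idempotents, which is precisely what makes the $K_0$-computation of the last display legitimate.
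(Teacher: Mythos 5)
Your proof is correct, and it takes a genuinely different and more economical route than the paper's. The paper establishes the identity $\partial_v(u)=-\partial_{v^{-1}}(u^{-1})$ by defining a homotopy $v_t$ (with $v_0$ a scalar rotation and $v_1=v$), showing that the relevant combinations stay close to $M_{2n}(\widetilde{C\cap D})$ along the path, and then invoking homotopy invariance of $K_0$-classes; verifying this requires re-running $\delta$-estimate computations ``analogous'' to those in the proof of Proposition~\ref{v lem}. You instead isolate the exact algebraic symmetry $v^{-1}=svs$ (which indeed holds: multiplying out $svs$ against the paper's formula $v=\begin{pmatrix} a(2-ba) & ab-1 \\ 1-ba & b \end{pmatrix}$ gives precisely the displayed expression for $v^{-1}$), and everything then transports by conjugation with the scalar, self-inverse, norm-one flip $s\in M_{2n}(\C)\subseteq M_{2n}(\widetilde{E})$ for $E\in\{C,D,C\cap D\}$. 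Since conjugation by $s$ respects norms, preserves the $\delta$-in relation for each $M_{2n}(\widetilde{E})$, and fixes $K_0$-classes (being an inner automorphism of each $M_{2n}(\widetilde{E})$), conditions \eqref{lift1}--\eqref{lift4} follow immediately from the corresponding conditions for $v$, and the sign flip in the $K_0$-class comes cleanly from $ses=1-e$, giving $v^{-1}ev=s(1-vev^{-1})s$ and hence $\{s(1-p)s\}_{\widetilde{C\cap D}}=[1_{2n}]-\{p\}_{\widetilde{C\cap D}}$. This eliminates both the homotopy and the extra estimate checking, and pinpoints the mechanism behind the negation. One small remark: the paper's displayed formula for $v^{-1}$ inside the proof of this lemma has the symplectic factor written as $\begin{pmatrix}0&-1\\1&0\end{pmatrix}$, but since $v$ ends in that factor its inverse must begin with $\begin{pmatrix}0&1\\-1&0\end{pmatrix}$ (as in the expansion in the proof of Proposition~\ref{v lem}); this is an evident typo in the source and does not affect your argument, which relies on the explicit $2\times 2$-block expansion rather than the product decomposition.
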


\begin{proof}
Checking that 
$$
v^{-1}=\begin{pmatrix} 0 & -1 \\ 1 & 0 \end{pmatrix}\begin{pmatrix} 1 & -a \\ 0 & 1 \end{pmatrix}\begin{pmatrix} 1 & 0 \\ b & 1 \end{pmatrix}\begin{pmatrix} 1 & -a \\ 0 & 1 \end{pmatrix}
$$ 
satisfies the properties from Definition \ref{lift def} with respect to $u^{-1}$ is essentially the same as checking the corresponding properties for $v$ and $u$ in the proof of Proposition \ref{v lem}.  We leave the details to the reader.  

It remains to establish the formula $\partial_v (u) = -\partial_{v^{-1}}(u^{-1})$.  For $t\in [0,1]$, define 
$$
v_t:=\begin{pmatrix} 1 & ta \\ 0 & 1 \end{pmatrix}\begin{pmatrix} 1 & 0 \\ -tb & 1 \end{pmatrix}\begin{pmatrix} 1 & ta \\ 0 & 1 \end{pmatrix}\begin{pmatrix} 0 & -1 \\ 1 & 0 \end{pmatrix}.
$$
Analogous computations to those we used to establish to property (iii) in the proof of Proposition \ref{v lem} show that $v_t^{-1}v\begin{pmatrix} 1 & 0 \\ 0 & 0 \end{pmatrix} v^{-1}v_t$ is in $M_{2n}(\widetilde{C\cap D})$ up to an error we can make as small as we like depending on $\delta$ (with $c$ and $X$ fixed), and that the difference 
$$
v_t^{-1}v\begin{pmatrix} 1 & 0 \\ 0 & 0 \end{pmatrix} v^{-1}v_t - v_t^{-1}\begin{pmatrix} 1 & 0 \\ 0 & 0 \end{pmatrix} v_t
$$
is in $M_{2n}(C\cap D)$, again up to an error that we can make as small as we like by making $\delta$ small (and keeping $c$ and $X$ fixed).  Hence for all $t\in [0,1]$ we get that the classes  
$$
\Big\{v_t^{-1}v\begin{pmatrix} 1 & 0 \\ 0 & 0 \end{pmatrix} v^{-1}v_t\Big\}_{\widetilde{C\cap D}} - \Big\{v_t^{-1}\begin{pmatrix} 1 & 0 \\ 0 & 0 \end{pmatrix} v_t\Big\}_{\widetilde{C\cap D}}
$$
of $K_0(C\cap D)$ are well-defined.  They are moreover all the same, as the elements defining them are homotopic.  However, the above equals $\delta_v(u)$ when $t=0$, and equals $-\delta_{v^{-1}}(u^{-1})$ when $t=1$, so we are done.
\end{proof}

\section{Approximate ideal structures and the summation map}\label{sum sec}

In this section, we prove a technical result, based very closely on \cite[Lemma 2.9]{Oyono-Oyono:2016qd}, and corresponding to exactness at position (III) in line \eqref{pre mv} from the introduction.  

The precise statement is a little involved, but roughly it says that given a finite-dimensional subspace $X$ of $A$ there is $\delta>0$ such that if $(h,C,D)$ is a $\delta$-ideal structure for $X$ as in Definition \ref{decomp}, then the maps $\sigma$ and $\iota$ from Definition \ref{iota def} have the following exactness property: if $(\kappa,\lambda)\in K_0(C)\oplus K_0(D)$ is such that $\sigma(\kappa,\lambda)=0$ \emph{and the subspace $X$ contains a `reason' for this element being zero}, then $(\kappa,\lambda)$ is in the image of $\iota$.  

This result is weak: it seems the quantifiers are in the wrong order for it to be useful, meaning that one would like to be able to choose $X$ based on $C$ and $D$, but the statement of the result is the other way around.  Nonetheless, the result is useful, and plays a crucial role in the proof of the injectivity half of theorem \ref{main}.

For the proof of the result, we need a condition that is closely related to the so-called `CIA property' as used in the definition of `nuclear Mayer-Vietoris pairs' in \cite[Definition 4.8]{Oyono-Oyono:2016qd}.  For the statement, let us say that a function $f:(0,\infty)\to (0,\infty)$ is a \emph{decay function} if $f(t)\to 0$ as $t\to 0$.  The following definition is a quantitative variant of Definition \ref{uni ais} from the introduction.

\begin{definition}\label{exc decomp}
Let $(C,D)$ be a pair of $C^*$-subalgebras of a $C^*$-algebra $A$, and let $f$ be a decay function.  Then $(C,D)$ is \emph{$f$-uniform} if for all $C^*$-algebras $B$ and $\delta>0$, if $c\in C\otimes B$ and $d\in D\otimes B$ satisfy $\|c-d\|\leq \delta$, then there exists $x\in (C\cap D)\otimes B$ with $\|x-c\|\leq f(\delta)$ and $\|x-d\|<f(\delta)$.  

Let $A$ be a $C^*$-algebra and $\mathcal{C}$ a set of pairs $(C,D)$ of $C^*$-subalgebras of $A$.  Then $A$ admits a \emph{uniform approximate ideal structure over $\mathcal{C}$} if it admits an approximate ideal structure over $\mathcal{C}$, and if in addition there is a decay function $f$ such that all pairs in $\mathcal{C}$ are $f$-uniform.
\end{definition}

The following example and non-example might help illuminate the definition.  We give some more interesting examples in Appendix \ref{fdc app}.

\begin{example}\label{uniform example}
If $(C,D)$ is a pair of $C^*$-\emph{ideals} in $A$, then $(C,D)$ is $f$-uniform where $f(t)=3t$.  To see this, say that $c\in C\otimes B$ and $d\in D\otimes B$ satisfy $\|c-d\|\leq \delta$.  Let $(h_i)$ be an approximate unit for $C$.  

We claim first that for each $i$, $(h_i\otimes 1_B)d$ is in $(C\cap D)\otimes B$.  Indeed, let $\epsilon>0$, and let $d'$ be an element of the algebraic tensor product $D\odot B$ such that $\|d'-d\|<\epsilon$.  Then $\|(h_i\otimes 1_B)d-(h_i\otimes 1_B)d'\|<\epsilon$, and $(h_i\otimes 1_B)d'\in(C\cap D)\odot B$.  As $\epsilon$ was arbitrary, $(h_i\otimes 1_B)d$ is in $(C\cap D)\otimes B$.  

Choose $i$ large enough so that $\|(h_i\otimes 1_B)c-c\|<\delta$, and set $x=(h_i\otimes 1_B)d$.  Then $x$ is in $(C\cap D)\otimes B$ by the claim, and 
$$
\|x-c\|\leq \|(h_i\otimes 1_B)d-(h_i\otimes 1_B)c\|+\|(h_i\otimes 1_B)c-c\|<2\delta
$$
and 
$$
\|x-d\|\leq \|(h_i\otimes 1_B)d-(h_i\otimes 1_B)c\|+\|c-(h_i\otimes 1_B)c\|+\|c-d\|< 3\delta,
$$
completing the argument that $(C,D)$ is $f$-uniform.
\end{example}

On the other hand, the following non-example shows that $f$-uniformity is quite a strong condition: while it is automatic for ideals by the above, it can fail badly for very simple examples of hereditary subalgebras.

\begin{example}\label{uniform non example}
Let $A=\mathcal{K}$ be the compact operators on $H=\ell^2(\N)$.  Choose projections $p$ and $q$ on $H$ whose ranges have trivial intersection, but such that there are sequences $(x_n)$ and $(y_n)$ of unit vectors in the ranges of $p$ and $q$ respectively with $\|x_n-y_n\|\to 0$ (it is not too difficult to see that such projections exist).  Let $C=p\mathcal{K}p$ and $D=q\mathcal{K}q$, so $C$ and $D$ are hereditary subalgebras of $\mathcal{K}$.  As $\text{range}(p)\cap \text{range}(q)=\{0\}$, we have that $C\cap D=\{0\}$: indeed, any self-adjoint element of $C\cap D$ is a self-adjoint compact operator with all its eigenvectors contained in $\text{range}(p)\cap \text{range}(q)$.  On the other hand, if $p_n$ and $q_n$ are the rank-one projections onto the spans of $x_n$ and $y_n$ respectively, then $p_n\in C$ and $q_n\in D$ for all $n$, and $\|p_n-q_n\|\to 0$.  It follows that the pair $(C,D)$ is not $f$-uniform for any decay function $f$.
\end{example}

The following lemma is immediate from the associativity of the minimal $C^*$-algebra tensor product.

\begin{lemma}\label{tens lem}
Say $A$ and $B$ are $C^*$-algebras and $f$ is a decay function.  If $(C,D)$ is an $f$-uniform pair for $A$, then $(C\otimes B,D\otimes B)$ is an $f$-uniform pair for $A\otimes B$.  \qed
\end{lemma}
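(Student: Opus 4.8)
The plan is to unwind the definition of $f$-uniformity for the pair $(C\otimes B,\,D\otimes B)$ inside $A\otimes B$, and to feed the resulting data into the hypothesis on $(C,D)$ using the \emph{composite} coefficient algebra $B\otimes E$. Precisely: fix a $C^*$-algebra $E$ and $\delta>0$, and suppose $c\in(C\otimes B)\otimes E$ and $d\in(D\otimes B)\otimes E$ satisfy $\|c-d\|\leq\delta$; I must find $x\in\big((C\otimes B)\cap(D\otimes B)\big)\otimes E$ with $\|x-c\|\leq f(\delta)$ and $\|x-d\|<f(\delta)$.

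The first step is bookkeeping with closures. Using associativity of the minimal tensor product to identify $(A\otimes B)\otimes E$ with $A\otimes(B\otimes E)$, and recalling the convention that $X\otimes B$ denotes the closure of $X\odot B$, one checks that under this identification $(C\otimes B)\otimes E=C\otimes(B\otimes E)$, $(D\otimes B)\otimes E=D\otimes(B\otimes E)$, and $((C\cap D)\otimes B)\otimes E=(C\cap D)\otimes(B\otimes E)$: in each case both sides are the closed linear span of $X\odot B\odot E$, and it is irrelevant in which order the closures are taken.

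Granting these identifications, $c$ and $d$ lie in $C\otimes(B\otimes E)$ and $D\otimes(B\otimes E)$ respectively and satisfy $\|c-d\|\leq\delta$, so the hypothesis that $(C,D)$ is $f$-uniform, applied with coefficient algebra $B\otimes E$ in place of $B$, produces $x\in(C\cap D)\otimes(B\otimes E)$ with $\|x-c\|\leq f(\delta)$ and $\|x-d\|<f(\delta)$. Re-using associativity, $x\in\big((C\cap D)\otimes B\big)\otimes E$, and since $C\cap D\subseteq C$ and $C\cap D\subseteq D$ give at once the inclusion $(C\cap D)\otimes B\subseteq(C\otimes B)\cap(D\otimes B)$, we conclude that $x\in\big((C\otimes B)\cap(D\otimes B)\big)\otimes E$. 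This is exactly the conclusion required, so $(C\otimes B,\,D\otimes B)$ is $f$-uniform for $A\otimes B$.

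The argument is entirely formal; the only point requiring a little care is the identification of iterated tensor products of closed subspaces in the second step, and it is worth emphasising that only the \emph{trivial} inclusion $(C\cap D)\otimes B\subseteq(C\otimes B)\cap(D\otimes B)$ is used, the (much more delicate) reverse inclusion never being needed, which is precisely why the lemma is immediate from associativity.
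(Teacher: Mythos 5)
Your argument is correct and is exactly what the paper has in mind: the paper gives no proof, asserting only that the lemma ``is immediate from the associativity of the minimal $C^*$-algebra tensor product,'' which is precisely the identification $(C\otimes B)\otimes E \cong C\otimes (B\otimes E)$ (and likewise for $D$ and $C\cap D$) that you make, followed by an appeal to the hypothesis with coefficient algebra $B\otimes E$. Your closing observation that only the trivial inclusion $(C\cap D)\otimes B \subseteq (C\otimes B)\cap(D\otimes B)$ is needed — not the delicate equality, which the paper only establishes later for $f$-uniform pairs in Lemma \ref{int prop} — is a correct and worthwhile point to flag.
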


We need two preliminary lemmas before we get to the main result.  Recall first that if $u$ is an invertible element of a unital ring, then we have the `Whitehead formula' 
\begin{equation}\label{whitehead0}
\begin{pmatrix} u & 0 \\ 0 & u^{-1}\end{pmatrix}=\begin{pmatrix} 1 & u \\ 0 & 1 \end{pmatrix}\begin{pmatrix} 1 & 0 \\ -u^{-1} & 1 \end{pmatrix} \begin{pmatrix} 1 & u \\ 0 & 1 \end{pmatrix} \begin{pmatrix} 0 & -1 \\ 1 & 0 \end{pmatrix}.
\end{equation}
This implies that invertible elements of the form $\begin{pmatrix} u & 0 \\ 0 & u^{-1}\end{pmatrix}$ are equal to zero in $K$-theory for purely `algebraic' reasons (compare \cite[Lemma 2.5 and Lemma 3.1]{Milnor:1971kl}).  The following lemma can thus be thought of as saying that any invertible element $u$ of a $C^*$-algebra that is zero in $K_1$ for `topological reasons' (i.e.\ is homotopic to the identity) is also zero in $K_1$ for `algebraic' reasons, up to an arbitrarily good approximation\footnote{This cannot be exactly true -- otherwise the algebraic and topological $K_1$ groups of a $C^*$-algebra would always be the same.}.

For the statement of the lemma, recall the notion of being $\delta$-in a subspace of a $C^*$-algebra from Definition \ref{eps in}.

\begin{lemma}\label{hom to alg}
Let $c,\epsilon>0$.  Then there exists $\delta>0$ with the following property.  Let $X$ be a subspace of a $C^*$-algebra $A$ and let $\{u_t\}_{t\in [0,1]}$ be a homotopy of invertibles in $M_n(\widetilde{A})$ such that:
\begin{enumerate}[(i)]
\item $u_1=1_n$;
\item for each $t$, both $u_t$ and $u_t^{-1}$ are $\delta$-in $\{1+x\in M_n(\widetilde{A})\mid  x\in M_n(X)\}$;
\item for each $t$, $\|u_t\|\leq c$ and $\|u_t^{-1}\|\leq c$.
\end{enumerate}

Then there exists $m\in \N$ and invertible elements $a\ind \{1+x\in M_{mn}(\widetilde{A})\mid x\in M_{mn}(X)\}$ and $b\ind \{1+x\in M_{(m+1)n}(\widetilde{A})\mid x\in M_{(m+1)n}(X)\}$ such that $a$, $b$, $a^{-1}$ and $b^{-1}$ all have norm at most $c$, and such that the difference
$$
\begin{pmatrix} u_0 & 0 \\ 0 &  1_{(2m+1)n} \end{pmatrix} - \begin{pmatrix} 1_n & 0 & 0 & 0 \\ 0 & a & 0 & 0 \\ 0 & 0 & a^{-1} & 0 \\ 0 & 0 & 0 & 1_n \end{pmatrix} \begin{pmatrix} b & 0 \\ 0 & b^{-1}\end{pmatrix}
$$
in $M_{2(m+1)n}(\widetilde{A})$ has norm at most $\epsilon$.
\end{lemma}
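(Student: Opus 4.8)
The plan is to consume the homotopy through a single finite subdivision and then write down $a$ and $b$ explicitly as block-diagonal matrices assembled from the interpolating invertibles, so that the ``Whitehead product'' $\mathrm{diag}(1_n,a,a^{-1},1_n)\cdot\mathrm{diag}(b,b^{-1})$ telescopes to $\mathrm{diag}(u_0,1_{(2m+1)n})$ up to an error governed by the mesh of the subdivision. Here $\delta$ is essentially inert: the hypotheses on the $u_t$ will transfer verbatim to $a$ and $b$, so one may take $\delta$ to be any fixed positive number (and note the hypotheses force $c\geq 1$, which is all that is needed for the norm bounds). Throughout, $\mathrm{diag}(\cdots)$ denotes the block-diagonal matrix with the indicated blocks.

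First I would use that $t\mapsto u_t$ is norm-continuous on the compact interval $[0,1]$, hence uniformly continuous, to choose a partition $0=s_1<s_2<\cdots<s_m=1$ with $\|u_{s_j}-u_{s_{j+1}}\|<\epsilon/c$ for every $j$; this determines $m$. Write $w_j:=u_{s_j}$, so that $w_1=u_0$, $w_m=u_1=1_n$, each $w_j$ and $w_j^{-1}$ is $\delta$-in $\{1+x\mid x\in M_n(X)\}$ with norm at most $c$, and consecutive $w_j$ are within $\epsilon/c$ of each other.

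Next I would set
$$
a:=\mathrm{diag}(w_1^{-1},w_2^{-1},\ldots,w_m^{-1})\in M_{mn}(\widetilde{A}),\qquad b:=\mathrm{diag}(w_1,w_2,\ldots,w_m,1_n)\in M_{(m+1)n}(\widetilde{A}),
$$
so that $a^{-1}=\mathrm{diag}(w_1,\ldots,w_m)$ and $b^{-1}=\mathrm{diag}(w_1^{-1},\ldots,w_m^{-1},1_n)$, and in particular $b=\mathrm{diag}(a^{-1},1_n)$. Since each diagonal block of $a$ is $\delta$-close to an element of $1+M_n(X)$, and a block-diagonal matrix with entries drawn from $X$ lies in $M_{mn}(X)$, the matrix $a$ is $\delta$-in $\{1+x\mid x\in M_{mn}(X)\}$; the same applies to $b$, and the norm bounds on $a,b,a^{-1},b^{-1}$ are immediate from hypothesis (iii) and $c\geq 1$. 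It then remains to compute the product. Expanding everything into $n\times n$ blocks, both $\mathrm{diag}(1_n,a,a^{-1},1_n)$ and $\mathrm{diag}(b,b^{-1})$ are block-diagonal with $2m+2$ blocks of size $n$, so the product is taken blockwise; the one-block offset between the leading $1_n$ of the first matrix and the leading $w_1$ of the second makes the $k$-th block of the product equal to $w_1=u_0$ for $k=0$, to $w_k^{-1}w_{k+1}$ for $1\leq k\leq m-1$, to $w_m^{-1}=1_n$ for $k=m$ (this is where $u_1=1_n$ is used), and to $1_n$ for $m<k\leq 2m+1$. Hence the difference in the statement is block-diagonal with $k$-th block $w_k^{-1}(w_{k+1}-w_k)$ for $1\leq k\leq m-1$ and all other blocks zero, so its norm is at most $\max_{1\leq k\leq m-1}\|w_k^{-1}\|\,\|w_{k+1}-w_k\|<c\cdot(\epsilon/c)=\epsilon$.

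The only real obstacle is spotting the right pair $(a,b)$: the choice $b=\mathrm{diag}(a^{-1},1_n)$ makes the ``lower half'' of the Whitehead product collapse exactly to the identity, while the one-block offset between $\mathrm{diag}(1_n,a,a^{-1},1_n)$ and $\mathrm{diag}(b,b^{-1})$ is precisely what converts the ``upper half'' into a telescoping chain $w_1^{-1}w_2,\ w_2^{-1}w_3,\ \ldots$ of near-identity blocks that closes up at $w_m^{-1}=1_n$. Once this arrangement is in place, every remaining step is a routine block manipulation; the genuine inputs are just uniform continuity of the path (to make the chain finely spaced) and the normalization $u_1=1_n$.
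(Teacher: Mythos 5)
Your proof is correct and follows the paper's approach: finely subdivide the homotopy and assemble block-diagonal invertibles $a$ and $b$ so that the Whitehead product telescopes to $\mathrm{diag}(u_0,1)$ up to a mesh-controlled error. The specific matrices differ slightly from the paper's (you put $u_0^{-1}$ at the head of $a$ and a trailing $1_n$ block in $b$, which moves the nonzero error blocks from the lower half to the upper half of the matrix, and you have a harmless sign in the error formula --- it is $w_k^{-1}(w_k-w_{k+1})$, not $w_k^{-1}(w_{k+1}-w_k)$), but the mechanism and the norm estimate are the same.
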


\begin{proof}
Let $\delta>0$ (to be chosen later), and choose a partition $0=t_0<...<t_m=1$ of the interval $[0,1]$ with the property that for any $i$, $\|u_{t_{i+1}}-u_{t_i}\|<\delta$.  Define 
$$
a:= \begin{pmatrix} u_{t_1}^{-1} & 0 & \hdots & 0 \\ 0 & u_{t_2}^{-1} & \hdots & 0 \\ \vdots & \vdots & \ddots & \vdots \\ 0 & 0 & \hdots & u_{t_m}^{-1} \end{pmatrix}\in M_{mn}(\widetilde{A}).
$$
$$
b:=\begin{pmatrix} u_{t_0} & 0 & \hdots & 0 \\ 0 & u_{t_1}& \hdots & 0 \\ \vdots & \vdots & \ddots & \vdots \\ 0 & 0 & \hdots & u_{t_m} \end{pmatrix}\in M_{(m+1)n}(\widetilde{A}).
$$
Then we have  that 
$$
\begin{pmatrix} u_0 & 0 \\ 0 &  1_{(2m+1)n} \end{pmatrix}  - \begin{pmatrix} 1_n & 0 & 0 & 0 \\ 0 & a & 0 & 0 \\ 0 & 0 & a^{-1} & 0 \\ 0 & 0 & 0 & 1_n \end{pmatrix} \begin{pmatrix} b & 0 \\ 0 & b^{-1}\end{pmatrix} 
$$ 
equals 
$$\begin{pmatrix} 0_{(m+1)n} & 0 & 0 & \cdots & 0  \\ 0 &  1-u_{t_1}u_{t_0}^{-1} & 0 &  \cdots & 0 \\ 0 & 0 & 1-u_{t_2}u_{t_1}^{-1} & \cdots & 0 \\ \vdots & \vdots & \vdots & \ddots & \vdots \\ 0 & 0 & 0 & \cdots & 1-u_{t_m}^{-1}  \end{pmatrix} .
$$
Recalling that $u_{t_m}=1$, the latter element has norm bounded above by 
$$
\max_{i} \|1-u_{t_{i+1}}u_{t_i}^{-1}\|=\max_i \|u_{t_i}-u_{t_{i+1}}\|\|u_{t_i}^{-1}\|<\delta c,
$$
which we can make as small as we like by decreasing the size of $\delta$. 
\end{proof}

The next lemma uses decompositions and the identity in line \eqref{whitehead0} to split up an element of the form $\begin{pmatrix} a & 0 \\ 0 & a ^{-1} \end{pmatrix}$ using approximate ideal structures as in Definition \ref{decomp}.

\begin{lemma}\label{whitehead}
Say $A$ is a $C^*$-algebra and $X$ a finite-dimensional subspace of $A$.    Then there is a finite-dimensional subspace $Y$ of $A$ such that for any $\epsilon>0$ there exists $\delta>0$ so that the following holds.   Assume that $a\in M_n(\widetilde{A})$ is an invertible element such that $a$ and $a^{-1}$ have norm at most $c$, and are $\delta$-in the set $\{1+x\in M_n(\widetilde{A})\mid x\in M_n(X)\}$.   Assume that $(h,C,D)$ is a $\delta$-ideal structure for $Y$.  Then there are homotopies $\{v^C_t\}_{t\in [0,1]}$ and $\{v^D_t\}_{t\in [0,1]}$ of invertible elements such that:
\begin{enumerate}[(i)]
\item for each $t$, $v_t^C\ine \{1+c\mid c\in M_{2n}(C)\}$ and $v_t^D\ine\{1+d\mid d\in M_{2n}(D)\}$; 
\item  $\begin{pmatrix} a & 0 \\ 0 & a^{-1}\end{pmatrix} =v^C_0v^D_0$;
\item $v^C_1=v^D_1=1_{2n}$;
\item for each $t$ the norms of $v^C_t$ and $v^D_t$ are both at most $(3+c)^{5}$.
\end{enumerate}
\end{lemma}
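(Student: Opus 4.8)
The plan is to realise $v^{C}$ and $v^{D}$ as the two halves of the usual rotation homotopy witnessing $\begin{pmatrix} a & 0 \\ 0 & a^{-1}\end{pmatrix}\sim 1$ in $K_{1}$, but with the rotation angle modulated by $h$, so that the half living where ``$h$ is small'' lands near $1+M_{2n}(C)$ and the half living where ``$h$ is large'' lands near $1+M_{2n}(D)$. First I would fix the data: let $X_{0}$ be the span of $X$ together with all products of at most two elements of a fixed basis of $X$, and let $Y$ be the finite-dimensional subspace supplied by Lemma \ref{tw lem} applied to $X_{0}$ with a large value of $N$; by Lemma \ref{exc tens}, a $\delta$-ideal structure $(h,C,D)$ for $Y$ gives an $M_{Y}\delta$-ideal structure $(h,M_{n}(C),M_{n}(D))$ for $M_{n}(Y)$, so I may operate over $M_{n}(A)$ freely. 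Given $\epsilon>0$, fix $\delta>0$ small (to be pinned down at the end) and such a structure, and write $x:=a-1$, $x':=a^{-1}-1$; then $\|x\|,\|x'\|\le c+1$, both $x$ and $x'$ are $\delta$-in $M_{n}(X)$, one has the identity $x+x'+xx'=0=x+x'+x'x$, and $h$ almost commutes with $x$, $x'$, hence with any fixed continuous function of $h$ applied entrywise.

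For a positive contraction $g$ in $M(A)$ put $R(g):=\begin{pmatrix}\cos(\tfrac{\pi}{2}g)&-\sin(\tfrac{\pi}{2}g)\\ \sin(\tfrac{\pi}{2}g)&\cos(\tfrac{\pi}{2}g)\end{pmatrix}$ (continuous functional calculus), a unitary in $M_{2n}(M(A))$ with $R(0)=1_{2n}$ and $R(1_{M(A)})=\begin{pmatrix}0&-1\\1&0\end{pmatrix}$, and set
\[
W_{g}:=\begin{pmatrix}a&0\\0&1\end{pmatrix}R(g)\begin{pmatrix}a^{-1}&0\\0&1\end{pmatrix}R(g)^{-1},
\]
a product of four invertibles, so that $W_{g}^{-1}=R(g)\begin{pmatrix}a&0\\0&1\end{pmatrix}R(g)^{-1}\begin{pmatrix}a^{-1}&0\\0&1\end{pmatrix}$ and $\|W_{g}\|,\|W_{g}^{-1}\|\le(1+c)^{2}$. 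Since $W_{g}=R(g)R(g)^{-1}=1$ when $a=1$, one computes directly that $W_{0}=1_{2n}$ and $W_{1_{M(A)}}=\begin{pmatrix}a&0\\0&a^{-1}\end{pmatrix}$. I would then take, for $t\in[0,1]$,
\[
v^{C}_{t}:=W_{(1-t)h},\qquad v^{D}_{t}:=\big(W_{h+t(1-h)}\big)^{-1}\begin{pmatrix}a&0\\0&a^{-1}\end{pmatrix}.
\]
Properties (ii)--(iv) then come out at once: $v^{C}_{0}v^{D}_{0}=\begin{pmatrix}a&0\\0&a^{-1}\end{pmatrix}$ because $v^{C}_{0}=W_{h}$ and $v^{D}_{0}=W_{h}^{-1}\begin{pmatrix}a&0\\0&a^{-1}\end{pmatrix}$; $v^{C}_{1}=W_{0}=1_{2n}$; $v^{D}_{1}=W_{1_{M(A)}}^{-1}\begin{pmatrix}a&0\\0&a^{-1}\end{pmatrix}=1_{2n}$; $t\mapsto v^{C}_{t},v^{D}_{t}$ are norm-continuous; and $v^{C}_{t}$, $v^{D}_{t}$ and their inverses all have norm at most $(1+c)^{3}\le(3+c)^{5}$.

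Everything of substance is in (i). Expanding $v^{C}_{t}-1_{2n}$, and likewise $v^{D}_{t}-1_{2n}=R(g_{t})\begin{pmatrix}a&0\\0&1\end{pmatrix}R(g_{t})^{-1}\begin{pmatrix}1&0\\0&a^{-1}\end{pmatrix}-1_{2n}$ with $g_{t}=h+t(1-h)$, and substituting $a=1+x$, $a^{-1}=1+x'$: the part not involving $x$ or $x'$ vanishes identically (set $a=1$), and every surviving term is, up to an error controlled by $\|[h,x]\|$, $\|[h,x']\|$ and $\|a-(1+x)\|$ (hence by $\delta$), of the form $f(h)z$, $zf(h)$ or $zf(h)z'$ with $f$ a fixed continuous function of $h$ and $z,z'$ monomials of bounded degree in the matrix entries of $x,x'$. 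Using $\cos^{2}+\sin^{2}=1$ together with $x+x'+xx'=0$, I would rewrite what remains so that for $v^{C}_{t}$ each coefficient function $f$ vanishes at the spectral point $0$ (as it must, since $v^{C}_{t}=1_{2n}$ when $h=0$) and for $v^{D}_{t}$ each $f$ vanishes at $1$ (since $v^{D}_{t}=1_{2n}$ when $h=1$), the off-diagonal ones in the latter case vanishing at both endpoints. Such $f(h)$ lies in the closed span of $\{h^{k}:k\ge1\}$, resp.\ $\{(1-h)^{k}:k\ge1\}$, resp.\ $\{h^{k}(1-h):k\ge1\}$, so by Lemma \ref{tw lem} it carries $M_{n}(X)$ into an arbitrarily small neighbourhood of $M_{n}(C)$, resp.\ $M_{n}(D)$, resp.\ $M_{n}(C\cap D)$, uniformly in $t$; taking $\delta$ small enough then makes $v^{C}_{t}$ $\epsilon$-in $1+M_{2n}(C)$ and $v^{D}_{t}$ $\epsilon$-in $1+M_{2n}(D)$ for all $t$.

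I expect this last step to be the main obstacle: the multiplying-out and the repeated bookkeeping with the two identities, carried out uniformly in $t$ so as to force every coefficient function of a surviving monomial in $x,x'$ to vanish at the correct endpoint, is routine but long, and runs closely parallel to the proof of Proposition \ref{v lem}. A secondary point requiring care is reconciling the (fixed) $N$ used to build $Y$ with the polynomial approximation of the transcendental functions $\cos(\tfrac{\pi}{2}g)$, $\sin(\tfrac{\pi}{2}g)$ entering $R(g)$; one takes $N$ generously and absorbs the resulting approximation errors into $\delta$.
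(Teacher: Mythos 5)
Your overall strategy---modulating a trivialisation of $\begin{pmatrix} a & 0 \\ 0 & a^{-1}\end{pmatrix}$ by $h$ so that the part supported where ``$h$ is large'' lands near $1+M_{2n}(C)$ and the part where ``$h$ is small'' lands near $1+M_{2n}(D)$---is exactly in the spirit of the paper's argument, and your bookkeeping of the endpoint and norm properties (ii)--(iv) is fine. But there is a genuine gap in (i), and it is not the ``secondary point'' you flag at the end: it is fatal with the quantifier order as stated.

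The problem is that your splitting is carried out by the rotation $R(g)=\begin{pmatrix}\cos(\tfrac{\pi}{2}g)&-\sin(\tfrac{\pi}{2}g)\\ \sin(\tfrac{\pi}{2}g)&\cos(\tfrac{\pi}{2}g)\end{pmatrix}$ with $g=(1-t)h$, so that after expanding $W_{(1-t)h}-1$ the coefficient functions of the monomials in $x,x'$ are \emph{transcendental} functions of $h$, for instance $\sin^{2}\!\big(\tfrac{\pi}{2}(1-t)h\big)$. To conclude that such a term is $\epsilon$-close to $M_{2n}(C)$ from the hypothesis that $(h,C,D)$ is a $\delta$-ideal structure for $Y$, you must first approximate the transcendental coefficient by a polynomial in $h$ with no constant term and then invoke Lemma \ref{tw lem}. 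But $Y$ (and hence the degree bound $N$ baked into it) is chosen \emph{before} $\epsilon$ in the statement of the lemma. For any fixed $N$, the best uniform approximation of $\sin^{2}(\tfrac{\pi}{2}s)$ on $[0,1]$ by a degree-$\leq N$ polynomial vanishing at $0$ has a strictly positive error $\eta_{N}^{*}$ which does \emph{not} shrink as $\delta\to 0$. So after all the $\delta$-dependent errors are made small, you are left with a fixed residual of order $\eta_{N}^{*}\|x\|$, and the conclusion ``$v^{C}_{t}$ is $\epsilon$-in $1+M_{2n}(C)$'' fails for every $\epsilon<\eta_{N}^{*}$. Writing ``one takes $N$ generously and absorbs the resulting approximation errors into $\delta$'' is exactly the step that is not available: those errors do not go to zero with $\delta$, only with $N$, and $N$ cannot depend on $\epsilon$.

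The paper sidesteps this entirely by using the algebraic Whitehead decomposition $\begin{pmatrix} a & 0 \\ 0 & a^{-1}\end{pmatrix}=X(a)Y(-a^{-1})X(a)\begin{pmatrix} 0 & -1 \\ 1 & 0 \end{pmatrix}$, splitting each elementary factor via $X(z_1+z_2)=X(z_1)X(z_2)$ with $z$'s of the form $1+hx$ and $(1-h)x$. With this choice every surviving coefficient of a monomial in $x,y$ is a \emph{polynomial} in $h$ of degree at most four, so a single fixed choice $N=4$ in Lemma \ref{tw lem} suffices and the error really is controlled by $\delta$ alone. If you want to pursue a continuous-rotation picture, you would have to discretise $R$ into a product of boundedly many elementary matrices---at which point you have essentially recovered the paper's Whitehead-based argument.
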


\begin{proof}
Let $Y_0$ be the subspace of $A$ spanned by all monomials of degree between one and four with entries from $X$.  Let $Y$ be as in Lemma \ref{tw lem} for this $Y_0$ and $N=4$.  Let then $\epsilon>0$ be given, and let $\delta>0$ be fixed, to be determined by the rest of the proof.  Let $(h,C,D)$ be a $\delta$-ideal structure for $X$.

Write $a=1+x$ and $a^{-1}=1+y$ with $x,y\ind M_n(X)$.  Consider the product decomposition
\begin{equation}\label{whitehead}
\begin{pmatrix} a & 0 \\ 0 & a^{-1} \end{pmatrix} =\begin{pmatrix} 1 & a \\ 0 & 1 \end{pmatrix}\begin{pmatrix} 1 & 0 \\ -a^{-1} & 1 \end{pmatrix} \begin{pmatrix} 1 & a \\ 0 & 1 \end{pmatrix} \begin{pmatrix} 0 & -1 \\ 1 & 0 \end{pmatrix}.
\end{equation}
Set $x^C:=1+hx$ and $x^D:=(1-h)x$, so that $x^C+x^D=a$.  Similarly, set $y^C:=1+hy$ and $y^D=(1-h)y$, so that $y^C+y^D=a^{-1}$.  For any element $z$ of a $C^*$-algebra, set
$$
X(z):=\begin{pmatrix} 1 & z \\ 0 & 1 \end{pmatrix} \quad \text{and}\quad Y(z):=\begin{pmatrix} 1 & 0 \\ z & 0 \end{pmatrix}.
$$
Then using that $X(z_1+z_2)=X(z_1)X(z_2)$ and similarly for $Y$, the product in line \eqref{whitehead} equals
\begin{align*}
X(x^D)X(x^C)Y(-y^C)Y(-y^D)X(x^C)X(x^D)\begin{pmatrix} 0 & -1 \\ 1 & 0 \end{pmatrix}.
\end{align*}
Rewriting further, this equals the product of 
$$
v^C:=X(x^D)X(x^C)Y(-y^C)X(x^C)\begin{pmatrix} 0 & -1 \\ 1 & 0 \end{pmatrix} X(-x^D),
$$
and 
$$
v^D:=X(x^D)\begin{pmatrix} 0 & 1 \\ -1 & 0 \end{pmatrix} X(-x^C)Y(-y^D)X(x^C)X(x^D)\begin{pmatrix} 0 & -1 \\ 1 & 0 \end{pmatrix}.
$$
We claim this $v^C$ and $v^D$ have the properties required of $v^C_0$ and $v^D_0$ in the statement.  The norm estimates are clear, as is the equation $\begin{pmatrix} a & 0 \\ 0 & a^{-1}\end{pmatrix} =v^C_0v^D_0$.  For the remainder of the proof, any constant called $\delta_n$ depends only on $c$, $X$, and $\delta$, and tends to zero as $\delta$ tends to zero (with the other inputs held constant).

We first claim that $v^C$ is $\epsilon$-in the set $\{1+c\mid c\in M_{2n}(C)\}$ for $\delta$ suitably small.  Using Lemma \ref{exc tens}, $h$ commutes with $x$ and $y$ up to some error $\delta_1$.  Using this, plus the fact that $xy=yx=-y-x$, one computes that
$$
X(x^C)Y(-y^C)X(x^C)\begin{pmatrix} 0 & -1 \\ 1 & 0 \end{pmatrix} 
$$
is within some $\delta_2$ of an element of the form
$$
\begin{pmatrix} 1 & 0 \\ 0 & 1 \end{pmatrix} +\begin{pmatrix} h & 0 \\ 0 & h \end{pmatrix}Z_1,
$$ 
where all entries of $Z_1$ are products of a noncommutative polynomial in $x$ and $y$ of degree at most two and with no constant term, with a polynomial in $h$ of degree at most two.  Hence up to error some $\delta_3$, we have that $v^C$ agrees with 
$$
X(x^D)\Big( \begin{pmatrix} 1 & 0 \\ 0 & 1 \end{pmatrix} +\begin{pmatrix} h & 0 \\ 0 & h \end{pmatrix}Z\Big)X(-x^D),
$$ 
and that up to some $\delta_4$, this is the same as 
$$
\begin{pmatrix} 1 & 0 \\ 0 & 1 \end{pmatrix} +\begin{pmatrix} h & 0 \\ 0 & h \end{pmatrix}Z_2,
$$
where every entry of $Z_2$ is a product of a noncommutative polynomial in $x$ and $y$ of degree at most four and with no constant term, with a polynomial in $h$ of degree at most four.  The claim follows from this, and the choice of $X$.

The computations showing that $v^D$ is $\epsilon$-in the set $\{1+d\mid d\in M_{2n}(D)\}$ for $\delta$ suitably small are similar.  Indeed, we first we note that 
$$
Y(-y_D)=\begin{pmatrix} 1 & 0 \\ 0 & 1 \end{pmatrix} +\begin{pmatrix} 1-h & 0 \\ 0 & 1-h \end{pmatrix}\begin{pmatrix} 0 & 0 \\ -y & 0 \end{pmatrix},
$$
whence $X(-x^C)Y(-y^D)X(x^C)$ is within $\delta_5$ of an element of the form
$$
\begin{pmatrix} 1 & 0 \\ 0 & 1 \end{pmatrix} +\begin{pmatrix} 1-h & 0 \\ 0 & 1-h \end{pmatrix}Z_3
$$
where every entry of $Z_3$ is a product of a noncommutative polynomial in $x$ and $y$ of degree at most two and with no constant term, with a polynomial in $h$ of degree at most two.  Hence $X(-x^C)Y(-y^D)X(x^C)X(x^D)$ is within $\delta_6$ of an element of the form
$$
\begin{pmatrix} 1 & 0 \\ 0 & 1 \end{pmatrix} +\begin{pmatrix} 1-h & 0 \\ 0 & 1-h \end{pmatrix}Z_4,
$$
where every entry of $Z_3$ is a product of a noncommutative polynomial in $x$ and $y$ of degree at most three and with no constant term, with a polynomial in $h$ of degree at most three.  The same is true therefore of 
$$
\begin{pmatrix} 0 & 1 \\ -1 & 0 \end{pmatrix} X(-x^C)Y(-y^D)X(x^C)X(x^D)\begin{pmatrix} 0 & -1 \\ 1 & 0 \end{pmatrix}.
$$
We thus get that $v^D$ is within $\delta_7$ of an element of the form  
$$
\begin{pmatrix} 1 & 0 \\ 0 & 1 \end{pmatrix} +\begin{pmatrix} 1-h & 0 \\ 0 & 1-h \end{pmatrix}Z_5,
$$
where every entry of $Z_5$ is a product of a noncommutative polynomial in $x$ and $y$ of degree at most four and with no constant term, with a polynomial in $h$ of degree at most four.  

To construct homotopies with the required properties, define $x^C_t:=1+(1-t)hx$, $x^D_t:=(1-t)(1-h)x$, $y^C_t:=1+(1-t)hy$, and $y^D_t;=(1-t)(1-h)y$.  Define moreover 
$$
v^C_t:=X(x^D_t)X(x^C_t)Y(-y^C_t)X(x^C_t)\begin{pmatrix} 0 & -1 \\ 1 & 0 \end{pmatrix} X(-x^D_t)
$$
and 
$$
v^D_t:=X(x^D_t)\begin{pmatrix} 0 & 1 \\ -1 & 0 \end{pmatrix} X(-x^C_t)Y(-y^D_t)X(x^C_t)X(x^D_t)\begin{pmatrix} 0 & -1 \\ 1 & 0 \end{pmatrix}.
$$
Using precisely analogous computations to those we have already done, one sees that these elements have the claimed properties: we leave the remaining details to the reader. 
\end{proof}

Here is the key technical result of this section.

\begin{proposition}\label{sigma lem}
Let $A$ be a $C^*$-algebra, let $f$ be a decay function, let $\epsilon>0$, let $c>0$, and let $X$ be a finite-dimensional subspace of $A$.  Then there exists a finite-dimensional subspace $Y$ of $A$ and $\delta>0$ with the following property.

Assume that for some $n\in \N$ there is a homotopy $\{u_t\}_{t\in [0,1]}$ of invertible elements in $M_{n}(\widetilde{A})$ with $u_1=1_n$, and such that each $u_t$ and $u_t^{-1}$ are $\delta$-in the set $\{1+x\in M_n(\widetilde{A})\mid x\in M_n(X)\}$, and have norm at most some $c$.  Then if $(h,C,D)$ is a $\delta$-ideal structure for $Y$ with $(C,D)$ $f$-uniform then the following holds.

Say $l<n$ and $u_C \in M_{n-l}(\widetilde{C})$ and $u_D\in M_{n-l}(\widetilde{D})$ are invertible, such that they and their inverses have norm at most $c$, and such that $\|u_0-u_Cu_D\oplus 1_l\|<\delta$.  Then there exists $k\in \N$ and an invertible element $x\in M_k(\widetilde{C\cap D})$ such that if $[x]\in K_1(C\cap D)$ is the corresponding class, then with notation as in Definition \ref{iota def}, 
$$
\iota[x]=([u_C],[u_D])\in K_1(C)\oplus K_1(D).
$$
\end{proposition}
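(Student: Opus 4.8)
The plan is to follow the strategy of \cite[Lemma 2.9]{Oyono-Oyono:2016qd}. The idea is to use the homotopy $\{u_t\}$ to rewrite $u_0$, after stabilising, as a product of invertibles that are alternately close to $1+M(C)$ and to $1+M(D)$ and that are homotopically trivial within their respective classes; to combine this with the hypothesis $\|u_0-u_Cu_D\oplus 1_l\|<\delta$; and then to use the $f$-uniformity of $(C,D)$ to collapse the relevant part of this word into $C\cap D$.

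To set up the statement, given $X$, $c$, $\epsilon$ and $f$ I would apply Lemma \ref{whitehead} to the subspace $X$ to produce the subspace $Y$, and then choose $\delta>0$ small enough to meet simultaneously the hypotheses of Lemmas \ref{hom to alg} and \ref{whitehead}, to make the $\{\cdot\}_B$-construction of Lemma \ref{eps in k} available for all invertibles of norm controlled by $c$, and so that $f$ applied to the errors that arise still lands below the Lemma \ref{eps in k} tolerances. All tolerances involved depend only on $c$, $\epsilon$, $X$ and $f$, so a single $\delta$ suffices; below, $\delta_i$ denotes a positive quantity tending to $0$ as $\delta\to 0$ with the other data held fixed.

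Now suppose we are given the homotopy $\{u_t\}$, a $\delta$-ideal structure $(h,C,D)$ for $Y$ with $(C,D)$ $f$-uniform, and $u_C,u_D$ as in the statement. Lemma \ref{hom to alg} rewrites $u_0\oplus 1$ to within $\delta_1$ as $\mathrm{diag}(1,a,a^{-1},1)\cdot(b\oplus b^{-1})$ with $a,b$ being $\delta$-in $1+M(X)$ (in the relevant matrix sizes) and of norm controlled by $c$. Each of these two factors is, up to conjugation by a scalar permutation matrix, of the form $(e\oplus e^{-1})\oplus 1$, so two applications of Lemma \ref{whitehead} --- using that $(h,C,D)$ is a $\delta$-ideal structure for $Y$ --- express each as $v^Cv^D$, where $v^C$ is $\delta_2$-close to $1+M(C)$, $v^D$ is $\delta_2$-close to $1+M(D)$, both are norm-controlled invertibles, and each factor is joined to $1$ by a norm-controlled homotopy staying close to $1+M(C)$ (resp. $1+M(D)$). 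Re-conjugating, one obtains $u_0\oplus 1$ within $\delta_3$ of $V^CV^DW^CW^D$, with $V^C,W^C$ close to $1+M(C)$, $V^D,W^D$ close to $1+M(D)$, all norm-controlled invertibles connected to $1$ within their classes; in particular $\{V^C\}_C=\{W^C\}_C=0$ and $\{V^D\}_D=\{W^D\}_D=0$ by Lemma \ref{eps in k}, as these classes are locally constant along the homotopies. Combining with $\|u_0-u_Cu_D\oplus 1_l\|<\delta$ and putting $P:=(u_C\oplus 1)^{-1}V^C$ and $Q:=W^D(u_D\oplus 1)^{-1}$ --- so $P$ is close to $1+M(C)$ with $\{P\}_C=-[u_C]$, and $Q$ is close to $1+M(D)$ with $\{Q\}_D=-[u_D]$ --- this rearranges to $P^{-1}$ being within $\delta_4$ of $V^DW^CQ$.

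The hard part is to convert this into an honest invertible $x\in M_k(\widetilde{C\cap D})$ with $\{x\}_C=[u_C]$ and $\{x\}_D=-[u_D]$, so that $\iota[x]=([u_C],[u_D])$. Here the word $V^DW^C$ consists of a near-$M(D)$ factor followed by a near-$M(C)$ factor, and, following \cite[Lemma 2.9]{Oyono-Oyono:2016qd}, I would repeatedly invoke the $f$-uniformity of $(C,D)$ --- applied over matrix algebras via Lemma \ref{tens lem} --- to slide such factors across one another, at each stage generating a correction term in $C\cap D$, and using the triviality of the $\{\cdot\}_C$ and $\{\cdot\}_D$ classes of the factors being moved together with the homotopies $V^D_t,W^C_t,P_t,Q_t$ to keep all norms and tolerances under control. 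The outcome is an invertible $x\in M_k(\widetilde{C\cap D})$ that is within $\delta_5$ of an invertible in $M_k(\widetilde C)$ representing $[u_C]$ and within $\delta_5$ of an invertible in $M_k(\widetilde D)$ representing $-[u_D]$; since $\delta_5$ is below the Lemma \ref{eps in k} tolerance, this forces $\{x\}_C=[u_C]$ and $\{x\}_D=-[u_D]$, completing the proof. The genuinely delicate step --- and the one I expect to be the main obstacle --- is controlling this iterated use of $f$-uniformity so that the constants stay bounded and the quantifiers line up; this is precisely the part the author attributes to \cite[Lemma 2.9]{Oyono-Oyono:2016qd}. The reductions preceding it (Lemmas \ref{hom to alg}, \ref{whitehead} and the class bookkeeping via Lemma \ref{eps in k}) are comparatively routine.
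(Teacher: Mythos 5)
Your reductions via Lemmas \ref{hom to alg} and \ref{whitehead} match the paper's, and you have correctly identified the intended endpoint (an invertible $x\in M_k(\widetilde{C\cap D})$ with $\{x\}_C=[u_C]$ and $\{x\}_D=-[u_D]$). The gap lies in the plan for the hard step. Iterated use of $f$-uniformity to ``slide factors across one another'' is not a tool that the hypothesis gives you: $f$-uniformity only says that a near-$C$ element and a near-$D$ element that are close \emph{to each other} have a common nearby representative in $C\cap D$. It provides no way of commuting or refactoring a $D$-factor past a $C$-factor inside a word such as $V^DW^CQ$, so there is no iteration to set up — and the citation you lean on is not itself an iteration either.

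The paper's route is a single regrouping, and it depends on two ideas you have not flagged. First, in applying Lemma \ref{whitehead} the paper reverses the roles of $C$ and $D$ on the $a$-factor (the ``slight variation''), producing $v^{D,a}v^{C,a}$ for that factor while keeping $v^{C,b}v^{D,b}$ for the $b$-factor, so the full word has the pattern $D\,C\,C\,D$ rather than your $C\,D\,C\,D$, with the two near-$C$ factors \emph{adjacent}. Second, it regroups by conjugation:
\begin{equation*}
v^{D,a}v^{C,a}v^{C,b}v^{D,b}\;=\;\bigl(v^{D,a}\,v^{C,a}v^{C,b}\,(v^{D,a})^{-1}\bigr)\cdot\bigl(v^{D,a}v^{D,b}\bigr),
\end{equation*}
and the nontrivial point is that the conjugated bracket is still $\delta_2$-in $M(\widetilde{C})$. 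This is checked \emph{upstream} of $f$-uniformity, using the $\delta$-ideal structure itself: the cross terms produced by the conjugation are $h(1-h)$-multiples (and $h^2(1-h)$-multiples) of polynomials in the data, hence land in $C\cap D\subseteq C$ by condition (iii) of Definition \ref{decomp} and the choice of $Y$. After this, the word is exactly (near-$C$ invertible)$\cdot$(near-$D$ invertible), each $K_1$-trivial, and comparing with $u_Cu_D\oplus 1_l$ gives that $1-v_C^{-1}u_C$ (near $M(C)$) is within controlled error of $1-v_Du_D^{-1}$ (near $M(D)$); only here, and only once, is $f$-uniformity invoked to produce $y\in M_k(C\cap D)$ close to both and hence $x=1+y$. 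So your factor bookkeeping is fine, but the resolution you need is not ``iterated $f$-uniformity'': it is the $DCCD$ ordering plus the conjugation computation via the ideal structure, which collapses the word to a two-factor product in one step.
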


\begin{proof}
Applying Lemma \ref{hom to alg} to the homotopy $\{u_t\}$ we get $m\in \N$ and invertible elements $a\ind \{1+x\in M_n(\widetilde{A})\mid x\in M_{mn}(X)\}$ an $b\ind \{1+x\in M_{(m+1)n}(\widetilde{A})\mid x\in M_n(X)\}$ such that 
$$
\begin{pmatrix} u_0 & 0 \\ 0 &  1_{(2m+1)n} \end{pmatrix} - \begin{pmatrix} 1_n & 0 & 0 & 0 \\ 0 & a & 0 & 0 \\ 0 & 0 & a^{-1} & 0 \\ 0 & 0 & 0 & 1_n \end{pmatrix} \begin{pmatrix} b & 0 \\ 0 & b^{-1}\end{pmatrix}
$$
has norm at most $\delta$.  Let $Y_a$ and $Y_b$ have the properties in Lemma \ref{whitehead} with respect to $a$ and $b$, and let $Y:=Y_a+Y_b$, a finite dimensional subspace of $A$.  Let then $c$ and $\epsilon$ be given, and let $\delta$ be fixed, to be determined by the rest of the proof.  Let $(h,C,D)$ be a $\delta$-ideal structure of $Y$ with $(C,D)$ $f$-uniform.

As usual, throughout the proof any constant called $\delta_n$ depends on $f$, $c$, $Y$, and $\delta$, and tends to zero as $\delta$ tends to zero.  Applying (a very slight variation of) Lemma \ref{whitehead} to $\begin{pmatrix} a & 0 \\ 0 & a^{-1}\end{pmatrix}$ and $\begin{pmatrix} b & 0 \\ 0 & b^{-1}\end{pmatrix}$, we get elements $v^{C,a}_t$ and $v^{D,a}_t$, and $v^{C,b}_t$ and $v^{D,b}_t$ for $t\in [0,1]$ satisfying the conditions there for some $\delta_1$.  Moreover, if we write $v^{C,a}:=v^{C,a}_1$ and similarly for the other terms, then
\begin{align*}
 \begin{pmatrix} 1_n & 0 & 0 & 0 \\ 0 & a & 0 & 0 \\ 0 & 0 & a^{-1} & 0 \\ 0 & 0 & 0 & 1_n \end{pmatrix} \begin{pmatrix} b & 0 \\ 0 & b^{-1}\end{pmatrix} & =v^{D,a}v^{C,a}v^{C,b}v^{D,b} \\ & =\underbrace{v^{D,a}v^{C,a}v^{C,b} (v^{D,a})^{-1}}_{=:v^C}\underbrace{v^{D,a} v^{D,b}}_{=:v^D}.
\end{align*}
Note that $v^C$ and $v^D$ are $\delta_2$-in $M_{2n}(\widetilde{C})$ and $M_{2n}(\widetilde{D})$ respectively, that they define the trivial class in $K_1(C)$ and $K_1(D)$ respectively, and that they and their inverses have norm at most $(3+c)^{20}$. 

Let $u_C$ and $u_D$ have the properties in the statement.  Replacing $u_C$ and $u_D$ by their block sums with $1_l$, we may (for notational simplicity) assume that $l=0$.   Now, we have that $u_Cu_D$ and $v_Cv_D$ are within some $\delta_3$ of each other.  Hence $1-v_C^{-1}u_C$ and $1-v_Du_D^{-1}$ are within some $\delta_4$ of each other.  Applying our $f$-uniformity assumption, there exists an element $y$ in some matrix algebra over $C\cap D$ that is within some $\delta_5$ of both.   Set $x=1+y$.  Then $x$ is an invertible element of some matrix algebra over $\widetilde{C\cap D}$ (as long as $\delta$ is suitably small) that is close to both $v_C^{-1}u_C$ and to $v_Du_D^{-1}$.  Hence for suitably small $\delta$, we have that as classes in $K_1(C)$
$$
[x]=[v_C^{-1}u_C]=[u_C],
$$
where the second equality follows as $v_C$ represents the trivial class in $K_1(C)$.  Similarly, in $K_1(D)$, 
$$
[x]=[v_Du_D^{-1}]=[u_D^{-1}].
$$
It follows from the last two displayed lines that 
$$
\iota[x]=([u_C],[u_D])
$$
as required. 
\end{proof}

\section{The product map}\label{prod sec}

In this section we recall some facts about the product map
$$
\times:K_*(A)\otimes K_*(B)\to K_*(A\otimes B)
$$
and discuss how it interacts with the boundary classes of Definition \ref{bound class}. 

We first recall concrete formulas for some of the special cases of this product.  See for example \cite[Section 4.7]{Higson:2000bs} for background on this, and \cite[Proposition 4.8.3]{Higson:2000bs} for the particular `$K_1\otimes K_0$' formula that we use.

For each $n$ and $m$, fix an identification $M_n(\C)\otimes M_m(\C)\cong M_{nm}(\C)$ that is compatible with the usual top-left corner inclusions $M_n(\C)\to M_{n+1}(\C)$ as $m$ and $n$ vary.  Use this to identify $M_n(A)\otimes M_m(B)$ with $M_{nm}(A\otimes B)$ for any $C^*$-algebras $A$ and $B$.  Any two such identifications differ by an inner automorphism, so the choice does not matter on the level of $K$-theory.  We will use these identifications without comment from now on.

We recall a basic lemma that is useful for setting up products in the non-unital case: see \cite[Lemma 4.7.2]{Higson:2000bs} for a proof.

\begin{lemma}\label{prod nu}
For a non-unital $C^*$-algebra $A$, let $\epsilon_A:\widetilde{A}\to \C$ denote the canonical quotient map.  For non-unital $C^*$-algebras $A$ and $B$, define $\phi$ to be the $*$-homomorphism 
$$
(\epsilon_A\otimes \text{id}_B)\oplus (\text{id}_A\otimes \epsilon_B):\widetilde{A}\otimes \widetilde{B}\to A\oplus B.
$$
(where we have identified $A\otimes \C$ with $A$ and similarly for $B$ to make sense of this).  Then the map
$$
K_*(A\otimes B)\to K_*(\widetilde{A}\otimes \widetilde{B})
$$
induced by the canonical inclusion $A\otimes B\to \widetilde{A}\otimes \widetilde{B}$ is an isomorphism onto $\text{Kernel}(\phi_*)$. 

Similarly, if $A$ is unital and $B$ is non-unital and $\psi:=\text{id}\otimes \epsilon_B:A\otimes \widetilde{B}\to A$, then the map
$$
K_*(A\otimes B)\to K_*(A\otimes \widetilde{B})
$$
induced by the canonical inclusion $A\otimes B\to A\otimes \widetilde{B}$ is an isomorphism onto $\text{Kernel}(\psi_*)$.  A precisely analogous statement holds if $A$ is non-unital and $B$ is unital.
\qed
\end{lemma}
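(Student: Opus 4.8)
The plan is to realize the canonical inclusion $A\otimes B\hookrightarrow\widetilde{A}\otimes\widetilde{B}$ as a composite of inclusions, each of which is the kernel of a \emph{split} surjective $*$-homomorphism. Since a split short exact sequence of $C^*$-algebras induces split short exact sequences of $K$-groups, each such inclusion will be injective on $K$-theory with image the kernel of the map induced by the corresponding quotient, and it will then remain only to track how these splittings combine to give the map $\phi$.

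First I would dispose of the ``half-unital'' statements (the second and third assertions of the lemma). If $A$ is unital and $B$ is not, the sequence $0\to A\otimes B\to A\otimes\widetilde{B}\xrightarrow{\psi}A\to 0$ with $\psi=\mathrm{id}_A\otimes\epsilon_B$ is split by the $*$-homomorphism $a\mapsto a\otimes 1_{\widetilde{B}}$. Hence $K_*(A\otimes\widetilde{B})\cong K_*(A\otimes B)\oplus K_*(A)$, the first summand is the image of the inclusion-induced map, and the splitting exhibits $\psi_*$ as the projection onto the second summand; so the first summand is exactly $\ker\psi_*$. The case $A$ non-unital, $B$ unital is symmetric.

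For the first assertion, with both $A$ and $B$ non-unital, I would factor the inclusion as $A\otimes B\hookrightarrow\widetilde{A}\otimes B\hookrightarrow\widetilde{A}\otimes\widetilde{B}$. The outer inclusion is handled as above, since $\mathrm{id}_{\widetilde{A}}\otimes\epsilon_B\colon\widetilde{A}\otimes\widetilde{B}\to\widetilde{A}$ is split by $a\mapsto a\otimes 1_{\widetilde{B}}$; the inner inclusion is the kernel of $\epsilon_A\otimes\mathrm{id}_B\colon\widetilde{A}\otimes B\to B$, which is split by $b\mapsto 1_{\widetilde{A}}\otimes b$ (a $*$-homomorphism because $\widetilde{A}$ is unital). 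Composing the splittings gives $K_*(\widetilde{A}\otimes\widetilde{B})\cong K_*(A\otimes B)\oplus K_*(B)\oplus K_*(\widetilde{A})$, with the first summand the image of the inclusion-induced map, the $K_*(B)$-summand the image of $b\mapsto 1_{\widetilde{A}}\otimes b$, and the $K_*(\widetilde{A})$-summand the image of $a\mapsto a\otimes 1_{\widetilde{B}}$. Finally I would compute $\phi_*$ on this decomposition: it vanishes on $K_*(A\otimes B)$ (which sits in the kernel of both components of $\phi$); on the $K_*(\widetilde{A})$-summand its first component restricts to the identity; and on the $K_*(B)$-summand its first component vanishes (because $\epsilon_B$ kills $B$) while its second component restricts to the natural map $K_*(B)\to K_*(\widetilde{B})$. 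Since the latter map is injective --- this is the split exactness of $0\to B\to\widetilde{B}\xrightarrow{\epsilon_B}\mathbb{C}\to 0$ --- a quick look at the resulting map $\phi_*$ shows $\ker\phi_*$ equals the $K_*(A\otimes B)$-summand, which is the image of the inclusion-induced map.

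The argument is entirely formal; the only thing to watch is the bookkeeping in the last step. One must check that the two splittings are genuine $*$-homomorphisms (this is what forces the ``outer'' tensor factor to be unitized at each stage) and that they are mutually compatible, so that the three $K$-theory summands are honestly complementary and $\phi_*$ is injective on the two of them that must not meet the kernel. A less hands-on alternative would be to run the six-term sequence of $0\to A\otimes B\to\widetilde{A}\otimes\widetilde{B}\to(\widetilde{A}\otimes\widetilde{B})/(A\otimes B)\to 0$ and force the index and exponential maps to vanish via naturality, using the section defined on the scalar subalgebra $\mathbb{C}\cdot(1\otimes 1)$; but the iterated-splitting route seems cleaner and more self-contained.
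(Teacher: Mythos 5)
Your argument is correct, and it recovers the standard proof. The paper does not give its own argument for this lemma --- it quotes the result from \cite[Lemma 4.7.2]{Higson:2000bs} --- and your iterated split-exact-sequence decomposition is exactly the usual route. The bookkeeping you flag at the end does go through without extra work: the three summands in $K_*(\widetilde{A}\otimes\widetilde{B})\cong K_*(A\otimes B)\oplus K_*(B)\oplus K_*(\widetilde{A})$ are honestly complementary because the decompositions are nested (the $K_*(A\otimes B)$ and $K_*(B)$ summands both sit inside the image of $K_*(\widetilde{A}\otimes B)$, which is in turn complemented by the $K_*(\widetilde{A})$ summand), and your evaluation of the two components of $\phi_*$ on each summand is right, with injectivity of $K_*(B)\to K_*(\widetilde{B})$ the one ingredient that keeps the kernel from being larger than it should be. Two remarks. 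First, the codomain of $\phi$ as written in the paper, namely $A\oplus B$, is a slip: on all of $\widetilde{A}\otimes\widetilde{B}$ the map $\epsilon_A\otimes\mathrm{id}$ lands in $\widetilde{B}$ rather than $B$, so the correct codomain is $\widetilde{B}\oplus\widetilde{A}$; you have implicitly used this corrected version (it is what Higson--Roe state), and it is exactly what your appeal to $K_*(B)\hookrightarrow K_*(\widetilde{B})$ requires. Second, a point of hygiene you might make explicit: the reason the spatial tensor product behaves well here even though $A$ or $B$ need not be exact is precisely that the short exact sequences being tensored are split, and tensoring with an arbitrary $C^*$-algebra preserves split exactness; this is what guarantees, for instance, that the kernel of $\mathrm{id}_{\widetilde{A}}\otimes\epsilon_B$ on $\widetilde{A}\otimes\widetilde{B}$ is indeed $\widetilde{A}\otimes B$ and nothing larger.
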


\begin{definition}\label{prod form}
Let $A$ and $B$ be unital $C^*$-algebras, and let $p\in M_n(A)$ and $q\in M_n(B)$ be idempotents.  Then the \emph{product} of the corresponding $K$-theory classes $[p]\in K_0(A)$ and $[q]\in K_0(B)$ is defined to be
$$
[p]\times [q]:=[p\otimes q]\in K_0(A\otimes B).
$$
Still assuming that $A$ and $B$ are unital, let $u\in M_n(A)$ be invertible and $p\in M_m(B)$ be an idempotent, and define 
$$
u\boxtimes p:=u\otimes p+1\otimes (1-p) \in M_{nm}(A\otimes B).
$$
Note that $u\boxtimes p$ is invertible, with inverse $u^{-1}\boxtimes p$.  The \emph{product} of $[u]\in K_1(A)$ and $[p]\in K_0(B)$ is defined to be 
$$
[u]\times [p]:=[u\boxtimes p]\in K_1(A\otimes B).
$$
One checks that these formulas defined on generators extend to well-defined homomorphisms
$$
\times:K_0(A)\otimes K_0(B) \to K_0(A\otimes B) \quad\text{and}\quad \times:K_1(A)\otimes K_0(B)\to K_1(A\otimes B).
$$

Assume now that $A$ and $B$ are non-unital.  Then one checks that for either $(i,j)=(0,0)$, or $(i,j)=(1,0)$, the canonical composition 
$$
K_i(A)\otimes K_j(B)\to K_i(\widetilde{A})\otimes K_j(\widetilde{B}) \stackrel{\times}{\to} K_{i+j}(\widetilde{A}\otimes \widetilde{B})
$$
takes image in the subgroup $\text{Kernel}(\phi_*)$ of the right hand side, where $\phi$ is as in Lemma \ref{prod nu}.  Using the identification $\text{Kernel}(\phi_*)\cong K_{i+j}(A\otimes B)$ of Lemma \ref{prod nu}, we thus get a general product map
$$
\times : K_i(A)\otimes K_j(B)\to K_{i+j}(A\otimes B)
$$
if $(i,j)\in \{(1,0),(0,0)\}$.  This all works analogously if just one of $A$ or $B$ is non-unital, using the other part of Lemma \ref{prod nu}.  
\end{definition}

For the next definition, for any $C^*$-algebra, let 
$$
\beta^{-1}:K_*(S^2A)\to K_*(A)
$$
be the inverse of the Bott periodicity isomorphism.

\begin{definition}\label{prod part}
Let $A$ and $B$ be $C^*$-algebras.  Define 
$$
K(A)\otimes_1 K(B):=\big( K_1(A)\otimes K_0(B) \big) \oplus \big( K_1(SA)\otimes K_0(SB)\big).
$$
Define a `product' map 
$$
\pi: K(A)\otimes_1 K(B)\to K_1(A\otimes B)
$$
to be the composition 
$$
\begin{array}{lcl}
\big(K_1(A)\otimes K_0(B)\big)\oplus\big( K_1(SA)\otimes K_0(SB)\big) & \stackrel{\times \oplus \times}{\longrightarrow} &  K_1(A\otimes B)\oplus K_1(S^2(A\otimes B)) \\
& \stackrel{\text{id}\oplus \beta^{-1}}{\longrightarrow} & K_1(A\otimes B)\oplus K_1(A\otimes B) \\
& \stackrel{\text{add}}{\longrightarrow} & K_1(A\otimes B) 
\end{array}
$$
We define 
$$
K(A)\otimes_0 K(B):=\big( K_0(A)\otimes K_0(B) \big) \oplus \big( K_0(SA)\otimes K_0(SB)\big)
$$
and 
$$
\pi: K(A)\otimes_0 K(B)\to K_0(A\otimes B)
$$
completely analogously.  
\end{definition}

The product map is natural with respect to suspensions and Bott periodicity.  Hence the map $\pi$ above identifies with the usual product map 
$$
\big(K_1(A)\otimes K_0(B)\big) \oplus \big(K_0(A)\otimes K_1(B)\big) \to K_1(A\otimes B)
$$
under the usual canonical identifications relating suspensions to dimension shifts in $K$-theory, and similarly in the $K_0$ case.

We need a tensor product lemma.  Recall that if $C,D$ are $C^*$-subalgebras of a $C^*$-algebra $A$, and if $B$ is another $C^*$-algebra, then there is a natural inclusion 
$$
(C\cap D)\otimes B\subseteq (C\otimes B)\cap (D\otimes B).
$$
This inclusion need not be an equality above in general: see for example \cite{Kye:1984aa}.  However, $f$-uniform pairs as in Definition \ref{exc decomp} behave well in this setting.

\begin{lemma}\label{int prop}
Let $(C,D)$ be an $f$-uniform pair of $C^*$-algebras of some $C^*$-algebra $A$ for some decay function $f$.  Then the natural inclusion 
$$
(C\cap D)\otimes B\subseteq (C\otimes B)\cap (D\otimes B)
$$
is the identity.
\end{lemma}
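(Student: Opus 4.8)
The plan is to exploit the $f$-uniformity hypothesis in the degenerate case where the two approximating elements are taken to be \emph{equal}. Note that only one inclusion needs proof, namely $(C\otimes B)\cap(D\otimes B)\subseteq (C\cap D)\otimes B$; the reverse inclusion is precisely the ``natural inclusion'' referred to in the statement (and recalled just before it).

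So I would fix $z\in (C\otimes B)\cap(D\otimes B)$ and argue as follows. For an arbitrary $\delta>0$, set $c:=z$, viewed as an element of $C\otimes B$, and $d:=z$, viewed as an element of $D\otimes B$. Then $\|c-d\|=0\leq\delta$, so the $f$-uniformity of $(C,D)$ (Definition \ref{exc decomp}) produces an element $x_\delta\in (C\cap D)\otimes B$ with $\|x_\delta-z\|\leq f(\delta)$. Since $f$ is a decay function, $f(\delta)\to 0$ as $\delta\to 0$; letting $\delta\to 0$ thus gives a sequence of elements of $(C\cap D)\otimes B$ converging to $z$ in the norm of $A\otimes B$. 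By the convention fixed in the notation section, $(C\cap D)\otimes B$ denotes the \emph{closed} subspace of $A\otimes B$ generated by the elementary tensors $x\otimes b$ with $x\in C\cap D$ and $b\in B$, so it contains all of its norm-limit points; hence $z\in (C\cap D)\otimes B$, which is what we wanted.

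I do not anticipate any real obstacle: the single point to keep in mind is that $(C\cap D)\otimes B$ is closed essentially by definition, and it is exactly this closedness that lets the approximation argument above close up. Everything else follows immediately from unwinding the definition of $f$-uniformity with a pair of identical elements.
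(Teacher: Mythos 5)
Your argument is correct and is essentially the paper's own proof spelled out: applying $f$-uniformity with $c=d=z$ shows that $(C\cap D)\otimes B$ is dense in $(C\otimes B)\cap(D\otimes B)$, and density plus closedness of the $C^*$-subalgebra $(C\cap D)\otimes B$ gives equality. No gaps.
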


\begin{proof}
The assumption of $f$-uniformity directly implies that the image of the inclusion is dense.  The image is a $C^*$-subalgebra, however, so closed.
\end{proof}

HERE

The next lemma is the key technical result of this section.  Morally, it can be thought of as saying that if notation is as in Proposition \ref{bound lem} and if $p$ an idempotent in some matrix algebra over $B$, then the diagram
$$
\xymatrix{ K_1(A)\otimes K_0(B) \ar[r]^-{\partial_v} \ar[d]^-\times & K_0(C\cap D) \ar[d]^-\times \\ K_1(A\otimes B) \ar[r]^-{\partial_{v\boxtimes p}} & K_0((C\cap D)\otimes B) }
$$
makes some sort of sense, and commutes, when one inputs the class $[u]\otimes [p]\in K_1(A)\otimes K_0(B)$.

\begin{lemma}\label{bound com}
Let $A$ be a unital $C^*$-algebra, let $c>0$, and let $\epsilon \in(0,\frac{1}{4c+6})$.  Then there exists $\delta>0$ satisfying the assumptions of Proposition \ref{bound lem}, and with the following additional property.  Assume that $u\in M_n(A)$ is invertible and that $v\in M_{2n}(A)$ is a $(\delta,c,C,D)$-lift for $u$ as in the conclusion of Proposition \ref{bound lem}.  Let $B$ be a $C^*$-algebra, and let $p\in M_m(B)$ be an idempotent with $\|p\|\leq c$. 

Then (with notation as in Definition \ref{prod form}) $v\boxtimes p$ is a $(\epsilon,c,C,D)$-lift for $u\boxtimes p$, and we have
$$
\partial_v(u)\times [p]=\partial_{v\boxtimes p}(u\boxtimes p)
$$
as classes in $K_0((C\cap D)\otimes B)$.
\end{lemma}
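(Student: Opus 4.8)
The plan is to exploit that for a fixed idempotent $p$ the operation $x \mapsto x \boxtimes p := x \otimes p + 1 \otimes (1-p)$ is a \emph{unital} algebra homomorphism: using $p^2 = p$ one checks $(xy) \boxtimes p = (x \boxtimes p)(y \boxtimes p)$, while the identity $x \boxtimes p = 1 \otimes 1 + (x-1) \otimes p$ shows it carries $M_{2n}(\widetilde{A})$ into $M_{2nm}(\widetilde{A \otimes B})$ and is Lipschitz with constant $\|p\|$ on affine sets of the form $1 + (\text{subspace})$. It sends $\begin{pmatrix} u^{-1} & 0 \\ 0 & u \end{pmatrix}$ to $\begin{pmatrix} (u \boxtimes p)^{-1} & 0 \\ 0 & u \boxtimes p \end{pmatrix}$ and $v$ to $v \boxtimes p$. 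I would first reduce to the case that $p$ is a projection: an idempotent $p \in M_m(B)$ is conjugate by some invertible $s \in M_m(\widetilde{B})$ to a projection $p_0$, and then $u \boxtimes p$ and $v \boxtimes p$ are obtained from $u \boxtimes p_0$ and $v \boxtimes p_0$ by conjugating with $1 \otimes s$, which alters neither $[u \boxtimes p] \in K_1$ nor (by the computation of the final paragraph) the boundary class. With $p$ a projection and $c \geq 1$ (forced by $\|v\|\,\|v^{-1}\| \geq 1$), condition \eqref{lift1} of Definition \ref{lift def} is immediate, since $\|v \boxtimes p\|, \|(v \boxtimes p)^{-1}\| \leq \max(\|v\|, 1) \leq c$.

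For the other conditions I would use that a lift $v$ coming from Proposition \ref{bound lem} (via Proposition \ref{v lem}) has scalar part $1$, in the sense that $v = 1_{2n} + v_0$ with $v_0$ — and likewise $v \begin{pmatrix} u^{-1} & 0 \\ 0 & u \end{pmatrix} - 1$ — lying $\delta$-close to $M_{2n}(D)$, respectively $M_{2n}(C)$. Applying $(-) \boxtimes p$ then gives $v \boxtimes p = 1 \otimes 1 + v_0 \otimes p$, which is $\|p\|\delta$-in $\{1 + d : d \in M_{2nm}(D \otimes B)\} \subseteq M_{2nm}(\widetilde{D \otimes B})$, giving \eqref{lift2}; and $(v \boxtimes p)\begin{pmatrix} (u\boxtimes p)^{-1} & 0 \\ 0 & u\boxtimes p\end{pmatrix} = \big(v\begin{pmatrix} u^{-1} & 0 \\ 0 & u\end{pmatrix}\big) \boxtimes p$ is similarly close to $M_{2nm}(\widetilde{C \otimes B})$, giving \eqref{lift3}. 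For \eqref{lift4} and \eqref{lift5} one computes, using $p(1-p) = 0$, that
$$
(v \boxtimes p)\begin{pmatrix} 1 & 0 \\ 0 & 0 \end{pmatrix}(v \boxtimes p)^{-1} = e \otimes p + P \otimes (1-p), \qquad e := v\begin{pmatrix} 1 & 0 \\ 0 & 0 \end{pmatrix}v^{-1}, \quad P := 1_n \oplus 0_n .
$$
Let $e' \in M_{2n}(\widetilde{C \cap D})$ be an honest idempotent close to $e$, as produced by Lemma \ref{eps in k}; since $[e'] - [P] \in K_0(C \cap D)$ one may, after conjugating $e'$ in a matrix algebra over $\widetilde{C \cap D}$, assume that the scalar part of $e'$ equals $P$, so $e' - P \in M_{2n}(C \cap D)$. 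Then $\widehat{E} := e' \otimes p + P \otimes (1-p) = P \otimes 1 + (e' - P) \otimes p$ is an honest idempotent lying in $M_{2nm}(\widetilde{(C \cap D) \otimes B})$, it is close to the element displayed above, and $[\widehat{E}] - [P \otimes 1] = [(e'-P)\otimes p]$ lies in the subgroup $K_0((C\cap D) \otimes B)$. This yields \eqref{lift4} and \eqref{lift5}, and it also shows $\partial_{v \boxtimes p}(u \boxtimes p) = [\widehat{E}] - [P \otimes 1]$ in $K_0((C \cap D) \otimes B)$.

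To finish, I would compare the images of $\partial_v(u) \times [p]$ and $\partial_{v \boxtimes p}(u \boxtimes p)$ in $K_0(\widetilde{C \cap D} \otimes \widetilde{B})$ under the map induced by the inclusion $(C\cap D)\otimes B \hookrightarrow \widetilde{C\cap D}\otimes\widetilde B$; this map is injective because, by Lemma \ref{prod nu}, it identifies $K_0((C \cap D) \otimes B)$ with $\mathrm{Kernel}(\phi_*)$. Since $p \in M_m(B)$ and $\partial_v(u) = [e'] - [P]$, the description of the product in Definition \ref{prod form} shows the image of $\partial_v(u) \times [p]$ is $[e' \otimes p] - [P \otimes p]$. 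On the other side, using that $e' \otimes p$ and $P \otimes (1-p)$ are orthogonal idempotents and that $[P \otimes 1] = [P \otimes p] + [P \otimes (1-p)]$, the image of $\partial_{v \boxtimes p}(u \boxtimes p) = [\widehat{E}] - [P \otimes 1]$ is $[e' \otimes p] + [P \otimes (1-p)] - [P \otimes p] - [P \otimes (1-p)] = [e' \otimes p] - [P \otimes p]$. The two images coincide, so by injectivity $\partial_v(u) \times [p] = \partial_{v \boxtimes p}(u \boxtimes p)$.

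I expect the delicate point to be the second paragraph: one must carefully distinguish the various unitizations — the subalgebras $\widetilde{D \otimes B}$ and $\widetilde{D} \otimes B$ differ, so $(-) \boxtimes p$ lands where required only once one records that $v$ and its associated shifted products approximate the relevant subalgebras \emph{with scalar part $1$} — and one must arrange the scalar part of $e'$ so that $\widehat{E}$ genuinely lies in a matrix algebra over $\widetilde{(C \cap D) \otimes B}$ rather than merely over $\widetilde{C \cap D} \otimes \widetilde{B}$. Everything else reduces to the homomorphism property of $(-) \boxtimes p$, bilinearity of the $K$-theory product, and Lemma \ref{prod nu}.
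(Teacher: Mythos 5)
Your proof follows essentially the same route as the paper's: both compute
\[
(v\boxtimes p)\begin{pmatrix} 1 & 0 \\ 0 & 0 \end{pmatrix}(v\boxtimes p)^{-1} = e\otimes p + P\otimes(1-p), \qquad e = v\begin{pmatrix} 1 & 0 \\ 0 & 0 \end{pmatrix}v^{-1},\ P=\begin{pmatrix} 1 & 0 \\ 0 & 0 \end{pmatrix},
\]
then use orthogonality of $e\otimes p$ and $P\otimes(1-p)$ and the identity $[P\otimes 1]-[P\otimes(1-p)]=[P\otimes p]$ to identify the boundary class with $(\{e\}-[P])\times[p]$. You fill in the lift-condition verification that the paper explicitly leaves to the reader, and the comparison-of-images argument via Lemma \ref{prod nu} is a clean way to handle the mismatch between the various unitizations; the reduction to the case of a projection is harmless and takes care of the norm bound in condition \eqref{lift1}.

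The one place that deserves scrutiny is precisely where you flag it: the ``scalar part $1$'' step. Definition \ref{lift def} only gives $v\in_\delta M_{2n}(\widetilde D)$, and an element of $M_{2n}(\widetilde D)$ has the form $\Lambda + W$ with $\Lambda\in M_{2n}(\C 1_A)$ and $W\in M_{2n}(D)$. Then $v\boxtimes p$ is $\delta\|p\|$-close to $1\otimes 1 + (\Lambda - 1_{2n})\otimes p + W\otimes p$, and the middle term lies in $M_{2nm}(\widetilde{D\otimes B})$ only if it lies in $\C\,1_{A\otimes B}$ or in $M_{2nm}(D\otimes B)$ --- neither of which hold in general unless $\Lambda$ is close to $1_{2n}$. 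So the hypotheses of the lemma as stated do not literally force $v\boxtimes p$ to satisfy condition \eqref{lift2}, and your remark that this holds for lifts ``coming from Proposition \ref{v lem}'' does not cover all lifts to which the lemma is ultimately applied (Lemma \ref{iota lem} also produces lifts, used in Theorems \ref{kun surj} and \ref{kun inj}, and a priori these could have nontrivial scalar part). This is not a defect of your argument relative to the paper's --- the paper simply writes ``we leave it to the reader to check'' --- but it does mean the clean statement really needs the additional hypothesis that the approximants in conditions \eqref{lift2}--\eqref{lift4} can be taken with scalar part $1$, or equivalently that $v-1$, $v\begin{pmatrix}u^{-1}&0\\0&u\end{pmatrix}-1$, and $vPv^{-1}-P$ are $\delta$-close to $M_{2n}(D)$, $M_{2n}(C)$, $M_{2n}(C\cap D)$ respectively. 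That strengthened condition is what you are implicitly using, and it is verified for each of the concrete lifts the paper actually builds.
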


\begin{proof}
We leave it to the reader to check that $v\boxtimes p$ is a $(\epsilon,c,C,D)$-lift of $u\boxtimes p$ for suitably small $\delta>0$ (depending only on $\epsilon$ and $c$).  Computing, we see that 
\begin{align*}
&\partial_{v\boxtimes p} (u\boxtimes p) \\ & =\Big\{\big(v\otimes p+1\otimes (1-p)\big)\begin{pmatrix} 1 & 0 \\ 0 & 0 \end{pmatrix} \big(v^{-1}\otimes p +1\otimes (1-p)\big)\Big\}_{(\widetilde{C\cap D})\otimes B}-\begin{bmatrix} 1 & 0 \\ 0 & 0 \end{bmatrix}  \\
& = \Big\{v\begin{pmatrix} 1 & 0 \\ 0 & 0 \end{pmatrix} v^{-1}\otimes p +\begin{pmatrix} 1 & 0 \\ 0 & 0 \end{pmatrix} \otimes (1-p)\Big\}_{(\widetilde{C\cap D})\otimes B}-\begin{bmatrix} 1 & 0 \\ 0 & 0 \end{bmatrix}.
\end{align*}
Using that the two terms inside the curved brackets are orthogonal, we have 
\begin{align*}
\Big\{v\begin{pmatrix} 1 & 0 \\ 0 & 0 \end{pmatrix} v^{-1}\otimes p &  +\begin{pmatrix} 1 & 0 \\ 0 & 0 \end{pmatrix} \otimes (1-p)\Big\}_{(\widetilde{C\cap D})\otimes B} \\ & =\Big\{v\begin{pmatrix} 1 & 0 \\ 0 & 0 \end{pmatrix} v^{-1}\otimes p \Big\}_{(\widetilde{C\cap D})\otimes B}+\Big[\begin{pmatrix} 1 & 0 \\ 0 & 0 \end{pmatrix} \otimes (1-p)\Big].
\end{align*}
As 
$$
\Big[\begin{pmatrix} 1 & 0 \\ 0 & 0 \end{pmatrix} \otimes (1-p)\Big]-\begin{bmatrix} 1 & 0 \\ 0 & 0 \end{bmatrix} = - \Big[\begin{pmatrix} 1 & 0 \\ 0 & 0 \end{pmatrix} \otimes p\Big],
$$
we get that 
\begin{align*}
\partial_{v\boxtimes p} (u\boxtimes p)  &=\Big\{v\begin{pmatrix} 1 & 0 \\ 0 & 0 \end{pmatrix} v^{-1}\otimes p \Big\}_{(\widetilde{C\cap D})\otimes B} -\Big[\begin{pmatrix} 1 & 0 \\ 0 & 0 \end{pmatrix} \otimes p\Big] \\
& = \Bigg(\Big\{v\begin{pmatrix} 1 & 0 \\ 0 & 0 \end{pmatrix} v^{-1} \Big\}_{\widetilde{C\cap D}}-  \begin{bmatrix} 1 & 0 \\ 0 & 0 \end{bmatrix} \Bigg)\times [p],
\end{align*}
which is exactly $\partial_v(u)\times [p]$ as claimed.
\end{proof}

We also need compatibility results for the maps $\iota$ and $\sigma$ of Definition \ref{iota def} and the maps $\pi$ of Definition \ref{prod part}.  These are recorded by the following lemma.

\begin{lemma}\label{iota com}
Let $C$ and $D$ be an excisive pair of $C^*$-subalgebras of a $C^*$-algebra $A$, and let $B$ be a $C^*$-algebra.  Then for $i\in \{0,1\}$, the diagrams
$$
\xymatrix{ K(C\cap D)\otimes_i K(B) \ar[r]^-{\iota\otimes \text{id}} \ar[d]^-\pi & K(C)\otimes_i K(B)\oplus K(D) \otimes_i K(B) \ar[d]^-{\pi} \\ 
K_i((C\cap D)\otimes B) \ar[r]^-\iota & K_1(C\otimes B)\oplus K_1(D\otimes B) }
$$
and 
$$
\xymatrix{ K(C)\otimes_i K(B)\oplus K(D) \otimes_i K(B) \ar[r]^-{\sigma\otimes \text{id}} \ar[d]^-{\pi} & K(A)\otimes_i K(B) \ar[d]^-{\pi} \\ 
K_i(C\otimes B)\oplus K_i(D\otimes B) \ar[r]^-\sigma & K_i(A\otimes B) }
$$
commute (where we have the canonical identification of Lemma \ref{int prop} amongst others to make sense of this).
\end{lemma}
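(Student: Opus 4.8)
The plan is to deduce both squares from a single fact --- the naturality of the product map $\pi$ of Definition \ref{prod part} with respect to $*$-homomorphisms --- together with Lemma \ref{int prop}, which is what makes the lower-left corner of the first diagram meaningful.

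First I would establish that the external product $\times$ of Definition \ref{prod form} is natural in each variable: for a $*$-homomorphism $\phi\colon A_1\to A_2$ and any $C^*$-algebra $B$ one has $(\phi\otimes\text{id}_B)_*\circ\times=\times\circ(\phi_*\otimes\text{id})$ as maps $K_i(A_1)\otimes K_j(B)\to K_{i+j}(A_2\otimes B)$ for $(i,j)\in\{(0,0),(1,0)\}$. In the unital case this is immediate from the defining formulas, since $\phi(p)\otimes q=(\phi\otimes\text{id}_B)(p\otimes q)$ for idempotents and $\phi(u)\boxtimes p=(\phi\otimes\text{id}_B)(u\boxtimes p)$ for invertibles; the non-unital case follows from Lemma \ref{prod nu}, because the unitised map $\widetilde{\phi}\otimes\text{id}_{\widetilde{B}}$ intertwines the structure maps appearing there and hence restricts correctly to the kernels identified with $K_*(A_j\otimes B)$. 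Combining this with the standard naturality of the inverse Bott map $\beta^{-1}$ and of the suspension isomorphisms, and with the trivial observation that direct sums of natural transformations and the addition map $K_*\oplus K_*\to K_*$ are natural, I conclude that for any $*$-homomorphism $\psi\colon A_1\to A_2$ the square relating $\pi$ on $K(A_1)\otimes_i K(B)$, $\pi$ on $K(A_2)\otimes_i K(B)$, $\psi_*\otimes\text{id}$, and $(\psi\otimes\text{id}_B)_*$ commutes, for $i\in\{0,1\}$.

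The rest is unwinding. Writing $j_C\colon C\cap D\hookrightarrow C$, $j_D\colon C\cap D\hookrightarrow D$, $k_C\colon C\hookrightarrow A$, $k_D\colon D\hookrightarrow A$ for the inclusions, Definition \ref{iota def} gives $\iota=((j_C)_*,-(j_D)_*)$ and $\sigma=(k_C)_*+(k_D)_*$ on $K$-theory, and likewise after applying $-\otimes B$. The second diagram then follows by applying the naturality of $\pi$ to $k_C$ and to $k_D$ and adding. For the first diagram one applies it to $j_C$ and $j_D$, but here one must first invoke Lemma \ref{int prop}: since $(C,D)$ is $f$-uniform, $(C\cap D)\otimes B$ coincides with the genuine intersection $(C\otimes B)\cap(D\otimes B)$, so the bottom $\iota$ really is the map of Definition \ref{iota def} for the pair $(C\otimes B,D\otimes B)$, with components $(j_C\otimes\text{id})_*$ and $-(j_D\otimes\text{id})_*$; the square then closes up as before.

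I expect the only part demanding genuine care to be the non-unital bookkeeping in the first step, and in particular on the degree-shifted summands $K_1(SA)\otimes K_0(SB)$ and $K_0(SA)\otimes K_0(SB)$ built into $\otimes_i$: one has to track that the various identifications of $K_*(A\otimes B)$ with kernels of maps on $K_*(\widetilde{A}\otimes\widetilde{B})$ or $K_*(A\otimes\widetilde{B})$ from Lemma \ref{prod nu} are compatible both with the maps induced by $\phi\otimes\text{id}$ and with $\beta^{-1}$ and the suspension functor. This is routine but somewhat tedious; in the write-up I would spell out the mechanism and leave the bulk of the verification to the reader, as elsewhere in the paper.
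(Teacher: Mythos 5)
Your proposal is correct and takes the same route as the paper: the paper's proof is exactly the one-line observation that the lemma follows from naturality of the product and Bott maps in $K$-theory, which you spell out by reducing $\iota$ and $\sigma$ to the inclusion $*$-homomorphisms $j_C, j_D, k_C, k_D$, using naturality of $\pi$ with respect to these, and invoking Lemma \ref{int prop} to make sense of the bottom-left corner of the first diagram. Your added discussion of the non-unital bookkeeping via Lemma \ref{prod nu} is the only point requiring any care, and you handle it appropriately.
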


\begin{proof}
This follows directly from naturality of the product maps and Bott maps in $K$-theory.
\end{proof}

\section{The inverse Bott map}\label{bott sec}

For a $C^*$-algebra $A$, let 
$$
\bt:K_*(S^2A)\to K_*(A)
$$
be the inverse Bott isomorphism.  It will be convenient to have a model for $\bt$ based on an asymptotic family.  In this section, we recall some facts about asymptotic families and their action on $K$-theory (in the `naive', rather than $E$-theoretic, picture).  We then discuss how the inverse Bott map can be represented by an asymptotic family with good properties.

Recall (see for example \cite[Definition 1.3]{Guentner:2000fj}) that an \emph{asymptotic family} between $C^*$-algebras $A$ and $B$ is a collection of maps $\{\alpha_t:A\to B\}_{t\in [1,\infty)}$ such that:
\begin{enumerate}[(i)]
\item for each $a\in A$, the map $t\mapsto \alpha_t(a)$ is continuous and bounded;
\item for all $a_1,a_2\in A$ and $z_1,z_2\in \C$, the quantities 
$$
\alpha_t(a_1a_2)-\alpha_t(a_1)\alpha_t(a_2),\quad \alpha_t(a_1^*)-\alpha_t(a_1)^*
$$
and 
$$
\alpha_t(z_1a_1+z_2a_2)-z_1\alpha_t(a_1)-z_2\alpha_t(a_2)
$$
all tend to zero as $t$ tends to infinity. 
\end{enumerate}

An asymptotic family $\{\alpha_t:A\to B\}_{t\in [1,\infty)}$ canonically defines a map $\alpha_*:K_*(A)\to K_*(B)$.  One way to define $\alpha_*$ uses the composition product in $E$-theory and the identification of $E_*(\C,A)$ with $K_*(A)$.  However, there is also a more naive and direct way.  This is certainly very well-known, but we are not sure exactly where to point in the literature for a description, so we describe it here for the reader's convenience.

Assume for simplicity that $A$ and $B$ are not unital (this is the only case we will need), and that $\{\alpha_t:A\to B\}$ is an asymptotic family.  We extend $\{\alpha_t\}$ to unitisations and matrix algebras just as we would for a $*$-homomorphism.  Note that as $A$ and $B$ are not unital, the extended asymptotic morphism on unitisations takes units to units.

If $e\in M_n(\widetilde{A})$ is an idempotent, then $\|\alpha_t(e)^2-\alpha_t(e)\|\to 0$ as $t\to\infty$.  Hence if $\chi$ is the characterisitic function of the half-plane $\{z\in \C\mid \text{Re}(z)>1/2\}$ then $\chi(\alpha_t(e))$ (defined using the holomorphic functional calculus) is a well-defined idempotent in $M_n(B)$ for all $t$ suitably large.  If $[e]-[f]$ is a formal difference of idempotents in $M_n(\widetilde{A})$ defining a class in $K_0(A)$, then one sees that for all $t$ suitably large the formal difference 
$$
[\chi(\alpha_t(e))]-[\chi(\alpha_t(f))]\in K_0(\widetilde{B})
$$
is in the kernel of the natural map $K_0(\widetilde{B})\to K_0(\C)$ induced by the canonical quotient $\widetilde{B}\to \C$.  We define $\alpha_*([e]-[f]):=[\chi(\alpha_t(e))]-[\chi(\alpha_t(f))]$ for any suitably large $t$. The choice of $t$ does not matter, as for any $t'\geq t$, the path $\{\chi(\alpha_s(e))\}_{s\in [t,t']}$ is a homotopy of idempotents, and similarly for $f$.

Similarly (and more straightforwardly), if $u\in M_n( \widetilde{A})$ is invertible, then as the extension of $\alpha_t$ to unitisations is unital, for all suitably large $t$, $\alpha_t(u)\in M_n(\widetilde{B})$ is invertible, and we get a well-defined class $\alpha_*[u]:=[\alpha_t(u)]$ for any suitably large $t$.  In this way, we get a well-defined homomorphism 
$$
\alpha_*:K_*(A)\to K_*(B).
$$

We also need to discuss the tensor product of an asymptotic family and a $*$-homomorphism.  First, we describe how an asymptotic family is essentially the same thing as a $*$-homomorphism $A\to C_b([1,\infty),B) / C_0([1,\infty),B)$.  More precisely, given an asymptotic family $\{\alpha_t:A\to B\}$, we can define
$$
\alpha:A\to \frac{C_b([1,\infty),B)}{C_0([1,\infty),B)}, \quad a\mapsto [t\mapsto \alpha_t(a)].
$$
Conversely, the Bartle-Graves selection theorem implies the existence of a continuous section $s:C_b([1,\infty),B) / C_0([1,\infty),B)\to C_b([1,\infty),B)$.  Then given a homomorphism $\alpha:A\to C_b([1,\infty),B) / C_0([1,\infty),B)$ we can define an asymptotic family $\{\alpha_t:A\to B\}$ by the formula $\alpha_t(a):=s(\alpha(a))(t)$.  If $s$ and $s'$ are two different choices of section and $\{\alpha_t\}$ and $\{\alpha_t'\}$ the corresponding asymptotic families, then $\alpha_t(a)-\alpha_t'(a)\to 0$ as $t\to\infty$ (compare for example \cite[pages 4-5]{Guentner:2000fj}).  In  particular, this implies that the induced maps $\alpha_*$ and $\alpha'_*$ on $K$-theory on the same.  

We may use this correspondence to define the tensor product of an asymptotic family and a $*$-homomorphism.  Say $\{\alpha_t:A\to B\}$ is an asymptotic family, and $\phi:C\to D$ a $*$-homomorphism with $D$ nuclear.  As in \cite[Proposition 4.3]{Guentner:2000fj}, we get a natural $*$-homomorphism
$$
\frac{C_b([1,\infty),B)}{C_0([1,\infty),B)}\otimes D\to \frac{C_b([1,\infty),B\otimes D)}{C_0([1,\infty),B\otimes D)},
$$
where we have used nuclearity of $D$ to see that the spatial tensor product $\cdot \otimes D$ agrees with the maximal tensor product $\cdot \otimes_{\max}D$.  Hence we get a $*$-homomorphism 
\begin{equation}\label{am tp}
A\otimes C\stackrel{\alpha\otimes \phi}{\longrightarrow} \frac{C_b([1,\infty),B)}{C_0([1,\infty),B)}\otimes D\to \frac{C_b([1,\infty),B\otimes D)}{C_0([1,\infty),B\otimes D)}.
\end{equation}

\begin{definition}\label{as fam tp}
We let $\{\alpha_t\otimes \phi:A\otimes C\to B\otimes D\}$ be any choice of asymptotic family corresponding to the $*$-homomorphism in line \eqref{am tp}.
\end{definition}

`The' asymptotic family $\{\alpha_t\otimes \phi\}$ is unfortunately not canonically determined by $\{\alpha_t\}$ and $\phi$.  Nonetheless, any such choice will satisfy 
$$
(\alpha_t\otimes \phi)(a\otimes c)-\alpha_t(c)\otimes \phi(c)\to 0 \quad \text{as} \quad t\to\infty
$$ 
on elementary tensors, and any two such choices will induce the same map $K_*(A\otimes C)\to K_*(B\otimes D)$.

The following lemma is the main technical result of this section.  It says that asymptotic families are compatible with boundary classes as in Definition \ref{bound class}.  For the statement, recall the definition of a $(\delta,c,C,D)$-lift from Definition \ref{lift def}.

\begin{lemma}\label{bott bound}
Let $c,\epsilon>0$.  Then there is $\delta>0$ with the following property.  

Let $\{\alpha_t:A\to B\}$ be an asymptotic family between non-unital $C^*$-algebras, and let $(C_A,D_A)$ be a pair of $C^*$-subalgebras of $A$ and $(C_B,D_B)$ a pair of $C^*$-subalgebras of $B$ such that for all $c\in C_A$ and $d\in D_A$,
$$
d(\alpha_t(c), C_B) \quad \text{and} \quad d(\alpha_t(d),D_B) 
$$
tend to zero as $t$ tends to infinity.  Assume that $u\in M_{2n}(\widetilde{A})$ is an invertible element with $\|u\|\leq c$ and $\|u^{-1}\|\leq c$, and let $v$ be a $(\delta/2,c/2,C_A,D_A)$-lift of $u$.  Then for all suitably large $t$, $\alpha_t(v)\in M_{2n}(\widetilde{B})$ is a $(\delta,c,C_B,D_B)$-lift of $\alpha_t(u)$, and moreover 
$$
\partial_{\alpha_t(v)}(\alpha_t(u))=\alpha_*(\partial_v(u))
$$
in $K_0(C_B\cap D_B)$ for all suitably large $t$.
\end{lemma}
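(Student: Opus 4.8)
The plan is to fix, at the outset, a $\delta>0$ satisfying the conclusion of Lemma~\ref{eps in k} and of Proposition~\ref{bound lem} with constant $c$ (so that $(\delta,c,C_B,D_B)$-lifts carry well-defined boundary classes, cf.\ Definition~\ref{bound class}), shrinking it further as finitely many estimates below demand; one then checks, for all sufficiently large $t$, that $\alpha_t(v)$ satisfies conditions (i)--(v) of Definition~\ref{lift def} relative to $\alpha_t(u)$, and finally that $\partial_{\alpha_t(v)}(\alpha_t(u))=\alpha_*(\partial_v(u))$. The halving of the constants in the hypothesis ($v$ a $(\delta/2,c/2,C_A,D_A)$-lift of $u$) is exactly what creates room to absorb the asymptotic errors produced by $\{\alpha_t\}$. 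In particular, since the $*$-homomorphism $A\to C_b([1,\infty),B)/C_0([1,\infty),B)$ attached to $\{\alpha_t\}$, and all of its matrix and unitisation amplifications, are contractive, and the quotient norm is $\limsup_{t\to\infty}\|\cdot\|$, we get $\limsup_{t\to\infty}\|\alpha_t(a)\|\le\|a\|$ for every $a$ in a matrix algebra over $\widetilde A$; combined with asymptotic multiplicativity and $\alpha_t(1)=1$, this shows that an invertible $w$ with $\|w\|,\|w^{-1}\|\le c/2$ has $\alpha_t(w)$ invertible for large $t$, with $\|\alpha_t(w)\|,\|\alpha_t(w)^{-1}\|\le c$ and $\alpha_t(w)^{-1}$ asymptotically equal to $\alpha_t(w^{-1})$.

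Next I would record that the hypotheses force $\{\alpha_t\}$ to restrict to asymptotic families on the three subalgebras. From $d(\alpha_t(c),C_B)\to0$ for $c\in C_A$ one obtains, by a continuous-selection argument applied to $t\mapsto\alpha_t(c)$ in the $C_b/C_0$ picture, an asymptotic family $C_A\to C_B$ agreeing with $\{\alpha_t\}$ up to asymptotically negligible error; likewise $D_A\to D_B$; restricting these one more time yields an asymptotic family $\widehat{\alpha}\colon C_A\cap D_A\to C_B\cap D_B$ asymptotically equal to $\{\alpha_t\}$ on $C_A\cap D_A$ (this last restriction is in any case implicit in the statement, since $\alpha_*(\partial_v(u))$ is asserted to lie in $K_0(C_B\cap D_B)$). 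We extend $\widehat{\alpha}$ and the other two families to unitisations and matrix algebras as usual, noting they act as the identity on scalars.

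With these preliminaries, conditions (i)--(iv) of Definition~\ref{lift def} for $\alpha_t(v)$ are obtained by applying $\alpha_t$ to the corresponding approximate relations satisfied by $v$ and pushing the $C_A$-, $D_A$- and $(C_A\cap D_A)$-witnesses through the restricted families. Condition~(i) is the norm/invertibility statement above applied to $w=v$. For~(ii), if $w\in M(\widetilde{D_A})$ is a $\delta/2$-witness for $v$, then $\alpha_t(w)$ is asymptotically in $M(\widetilde{D_B})$ while $\limsup_t\|\alpha_t(v)-\alpha_t(w)\|\le\|v-w\|\le\delta/2$, so $\alpha_t(v)$ is $\delta$-in $M(\widetilde{D_B})$ once $t$ is large. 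Condition~(iii) is similar, applying $\alpha_t$ to $v\begin{pmatrix}u^{-1}&0\\0&u\end{pmatrix}$ and using $\alpha_t(u)^{-1}\approx\alpha_t(u^{-1})$. For~(iv), write $e_A:=v\begin{pmatrix}1&0\\0&0\end{pmatrix}v^{-1}$ and let $p_A\in M(\widetilde{C_A\cap D_A})$ be the genuine idempotent with $e_A$ close enough to $p_A$ for the bracket notation of Definition~\ref{eps in k def} to apply; then $\alpha_t(e_A)$ is asymptotically equal both to $\alpha_t(v)\begin{pmatrix}1&0\\0&0\end{pmatrix}\alpha_t(v)^{-1}$ and to $\widehat{\alpha}_t(p_A)\in M(\widetilde{C_B\cap D_B})$, which gives~(iv). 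Condition~(v) will fall out of the identity.

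Finally, the core identity. With $p_A$ as above, set $P_t:=\chi(\widehat{\alpha}_t(p_A))\in M(\widetilde{C_B\cap D_B})$; since $\widehat{\alpha}_t(p_A)$ has asymptotically small $\|x^2-x\|$, $P_t$ is a genuine idempotent for large $t$ by Lemma~\ref{idem lem}, and chaining the estimates $P_t\approx\widehat{\alpha}_t(p_A)\approx\alpha_t(p_A)\approx\alpha_t(e_A)\approx\alpha_t(v)\begin{pmatrix}1&0\\0&0\end{pmatrix}\alpha_t(v)^{-1}$ shows $P_t$ is close enough to $\alpha_t(v)\begin{pmatrix}1&0\\0&0\end{pmatrix}\alpha_t(v)^{-1}$ to serve as the value of $\big\{\alpha_t(v)\begin{pmatrix}1&0\\0&0\end{pmatrix}\alpha_t(v)^{-1}\big\}_{\widetilde{C_B\cap D_B}}$, so that $\partial_{\alpha_t(v)}(\alpha_t(u))=[P_t]-\begin{bmatrix}1&0\\0&0\end{bmatrix}$. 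On the other hand $\partial_v(u)=[p_A]-\begin{bmatrix}1&0\\0&0\end{bmatrix}$, and by the definition of $\alpha_*$ on $K_0(C_A\cap D_A)$ recalled in this section --- apply $\chi\circ\widehat{\alpha}_t$ to each idempotent of a formal difference --- together with the fact that $\chi\circ\widehat{\alpha}_t$ fixes the scalar projection $\begin{pmatrix}1&0\\0&0\end{pmatrix}$, we get $\alpha_*(\partial_v(u))=[P_t]-\begin{bmatrix}1&0\\0&0\end{bmatrix}$ as well, proving the identity; since its right-hand side lies in $K_0(C_B\cap D_B)$ by construction, this also verifies condition~(v). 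I expect the main obstacle to be the preliminary reduction to the restricted family $\widehat{\alpha}$, together with the bookkeeping that ensures, once $\delta$ is small and $t$ large, that each of the numerous "$\approx$" above is a genuine estimate with constants depending only on $c$; the conceptual content is simply that $\chi\circ\widehat{\alpha}_t(p_A)$ is a single honest idempotent witnessing both sides of the identity.
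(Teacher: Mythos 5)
Your proposal is correct and follows essentially the same route as the paper: the asymptotic-contractiveness claim is proved via the quotient norm in the $C_b/C_0$ picture exactly as here; conditions (i)--(v) are verified by pushing the $\delta/2$-witnesses through $\alpha_t$ and absorbing errors into the halved constants; and the boundary-class identity is established by picking a genuine idempotent $p_A$ near $v\begin{pmatrix}1&0\\0&0\end{pmatrix}v^{-1}$ in $M_{2n}(\widetilde{C_A\cap D_A})$, observing that $\chi(\alpha_t(p_A))$ simultaneously represents $\alpha_*(\partial_v(u))$ and (being close to $\alpha_t(v)\begin{pmatrix}1&0\\0&0\end{pmatrix}\alpha_t(v)^{-1}$) the bracket defining $\partial_{\alpha_t(v)}(\alpha_t(u))$. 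Your device of packaging the hypotheses into restricted asymptotic families $\widehat\alpha$ is a cosmetic reorganisation; the paper just applies $\alpha_t$ directly to the witnesses. One small caveat you correctly flag but do not resolve, and which the paper also leaves implicit: the stated hypothesis controls $d(\alpha_t(\cdot),C_B)$ on $C_A$ and $d(\alpha_t(\cdot),D_B)$ on $D_A$, which does not formally yield $d(\alpha_t(\cdot),C_B\cap D_B)\to 0$ on $C_A\cap D_A$ without some additional input (in the paper's application this is supplied by Lemma~\ref{bott lem}\eqref{bl fd cont} together with $f$-uniformity); since this imprecision is shared with the source, it is not a defect of your argument relative to the paper's.
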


\begin{proof}
We use the same notation $\{\alpha_t\}$ for the canonical extensions to matrix algebras and unitisations.  Note first that as the extension of $\{\alpha_t\}$ to unitisations is unital, and as $\alpha_t$ is asymptotically multiplicaitve, $\alpha_t(u)$ and $\alpha_t(v)$ are invertible for all suitably large $t$.

We first claim that asymptotic families are `asymptotically contractive' in the following sense: for any $a\in A$ and any $\epsilon>0$ we have $\|\alpha_t(a)\|< \|a\|+\epsilon$ for all suitably large $t$.  Indeed, let
$$
\alpha:A\to \frac{C_b([1,\infty),B)}{C_0([1,\infty),B)}, \quad a\mapsto [t\mapsto \alpha_t(a)]
$$
be the corresponding $*$-homomorphism.  As $\alpha$ is a $*$-homomorphism, it is contractive.  Hence by definition of the quotient norm, for any $\epsilon>0$ there is $b\in C_0([1,\infty),B)$ such that 
$$
\sup_{t\in [1,\infty)} \|\alpha_t(a)-b(t)\|<\|\alpha(a)\|+\epsilon\leq \|a\|+\epsilon.
$$
As $\|b(t)\|\to 0$ as $t\to\infty$, the claim follows.

Now, from the claim and the fact that for all $d\in D_A$, $d(\alpha_t(d),D_B)$ tends to zero as $t$ tends to infinity, that we have that $\alpha_t(v)$ is $\delta$-in $M_{2n}(\widetilde{D_B})$ for all suitably large $t$. Similarly, and using also the asymptotic multiplicativity and unitality of $\{\alpha_t\}$, we get that 
$$
\alpha_t(v)\begin{pmatrix} \alpha_t(u)^{-1} & 0 \\ 0 & \alpha_t(u) \end{pmatrix} \ind M_{2n}(\widetilde{C_B})
$$
for all suitably large $t$.  The remaining conditions from Definition \ref{lift def} follow similarly.

To see that $\partial_{\alpha_t(v)}(\alpha_t(u))=\alpha_*(\partial_v(u))$ for $t$ large enough, note that for suitably large $t$, the former is represented by 
\begin{equation}\label{bott bound 1}
\Big\{\alpha_t(v)\begin{pmatrix} 1 & 0 \\ 0 & 0 \end{pmatrix} \alpha_t(v)^{-1} \Big\}_{\widetilde{C_B\cap D_B}}-\begin{bmatrix} 1 & 0 \\ 0 & 0 \end{bmatrix}.
\end{equation}
For the latter, one starts by choosing an idempotent $f\in M_{2n}(\widetilde{C_A\cap D_A})$ suitably close to $v\begin{pmatrix} 1 & 0 \\ 0 & 0 \end{pmatrix} v^{-1}$ as in Lemma \ref{eps in k} so that 
$$
\Big\{v\begin{pmatrix} 1 & 0 \\ 0 & 0 \end{pmatrix} v^{-1} \Big\}_{\widetilde{C_A\cap D_A}}=[f]
$$
in $K_0(\widetilde{C_A\cap D_A})$.  Then $\alpha_*(\partial_v(u))$ is represented by 
\begin{equation}\label{bott bound 2}
\chi(\alpha_t(f)) - \begin{bmatrix} 1 & 0 \\ 0 & 0 \end{bmatrix}
\end{equation}
for $t$ suitably large, where $\chi$ is as usual the characteristic function of $\{z\in \C\mid \text{Re}(z)>1/2\}$.  Now, as $\|\alpha_t(f)\|$ is uniformly bounded in $t$ and as $\|\alpha_t(f)^2-\alpha_t(f)\|\to 0$, we may apply Lemma \ref{idem lem} to conclude that $\|\alpha_t(f)-\chi(\alpha_t(f))\|\to 0$.  On the other hand, by making $\delta$ suitably small and $t$ large, and using the `asymptotic contractiveness' claim at the start of the proof, we can make $\alpha_t(f)$ as close as we like to
$$
\alpha_t(v)\begin{pmatrix} 1 & 0 \\ 0 & 0 \end{pmatrix} \alpha_t(v)^{-1}.
$$
Comparing lines \eqref{bott bound 1} and \eqref{bott bound 2}, the proof is complete.
\end{proof}

We need the fact that Bott periodicity is induced by an appropriate asymptotic morphism.  The following lemma is  well-known.

\begin{lemma}\label{bott lem}
For any $C^*$-algebra $A$ there is an associated asymptotic family 
$$
\alpha_t:S^2A\leadsto A\otimes \mathcal{K}
$$
with the following properties:
\begin{enumerate}[(i)]
\item the map $\alpha_*$ induced on $K$-theory by $\{\alpha_t\}$ is the inverse Bott map $\bt$;
\item if $B$ is a $C^*$-subalgebra of $A$ and $\{\alpha_t^A\}$ and $\{\alpha_t^B\}$ are the asymptotic families associated to $A$ and $B$ respectively, then for all $b\in S^2B$, $\alpha^A_t(b)-\alpha_t^B(b)\to 0$ as $t\to\infty$; 
\item \label{bl fd cont} for any finite-dimensional subspace $X$ of $A$ and any element of $S^2X$, 
$$
\sup\{d(\alpha_t(x),X\otimes \mathcal{K})\mid x\in S^2X,\|x\|\leq 1\}
$$
tends to zero as $t$ tends to infinity;
\item if we fix an inductive limit description $\mathcal{K}=\overline{\bigcup_{n=1}^\infty M_n(\C)}$, then for all $t$ and all $a\in S^2A$, $\alpha_t(a)$ has image in the $*$-subalgebra $\bigcup_{n=1}^\infty M_n(A)$ of $A\otimes \K$.
\end{enumerate} 
\end{lemma}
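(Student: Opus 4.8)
The plan is to reduce everything to a single carefully chosen scalar model and then tensor. Concretely, I would fix an asymptotic family $\{\beta_t\colon C_0(\R^2)=S^2\C\leadsto\K\}$ that induces the inverse Bott map $\bt\colon K_*(S^2\C)\to K_*(\C)$ --- for instance the one built from the rescaled Bott--Dirac operator on $\R^2$ (whose square is a shifted harmonic oscillator, hence has compact resolvent) or from the index map of the Toeplitz extension. That such a family exists, may be taken to consist of completely positive contractions, and induces $\bt$, is classical. For property (iv) I would normalise it further so that each $\beta_t$ has image in $M_{n(t)}(\C)$ for some $n(t)\in\N$: writing $P_n\in\K$ for the projection onto the span of the first $n$ standard basis vectors, replace $\beta_t$ by $\beta_t':=P_{n(t)}\beta_t(\,\cdot\,)P_{n(t)}$ for a sufficiently slowly increasing sequence $n(t)\to\infty$. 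Each $\beta_t'$ is still a completely positive contraction, the family $\{\beta_t'\}$ is asymptotically equal to $\{\beta_t\}$ (hence still an asymptotic family inducing $\bt$), and $\beta_t'(C_0(\R^2))\subseteq M_{n(t)}(\C)$.

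Next I would define $\alpha_t\colon S^2A=C_0(\R^2)\otimes A\to\K\otimes A\cong A\otimes\K$ by $\alpha_t:=\beta_t'\otimes\mathrm{id}_A$. Although Definition \ref{as fam tp} is stated for a $*$-homomorphism with nuclear codomain (which would require $A$ nuclear), exactly the same construction runs here because $\K$ is nuclear: the quotient $C_b([1,\infty),\K)/C_0([1,\infty),\K)$ is a quotient of a type I algebra, hence nuclear, so all the tensor-product identifications used in line \eqref{am tp} are available with $A$ in the second slot. Concretely, since each $\beta_t'$ is an honest completely positive contraction, $\beta_t'\otimes\mathrm{id}_A$ is an honest completely positive contraction on the spatial tensor product, and one checks that $\{\beta_t'\otimes\mathrm{id}_A\}$ is an asymptotic family by approximating arbitrary elements of $C_0(\R^2)\otimes A$ by finite sums of elementary tensors and using boundedness.

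Then I would verify the four properties. Property (i) follows from the scalar case by naturality: one computes $\alpha_*$ on idempotents and invertibles over $\widetilde{S^2A}$ exactly as in the description of $\alpha_*$ recalled above, and because $\alpha_t=\beta_t'\otimes\mathrm{id}_A$ is tensorial this reduces the identity with $\bt$ to the case $A=\C$ (alternatively, invoke the classical statement directly). Property (ii) holds on the nose: for $B\subseteq A$ the restriction of $\beta_t'\otimes\mathrm{id}_A$ to $C_0(\R^2)\otimes B$ is $\beta_t'\otimes\mathrm{id}_B$, with image in $\K\otimes B\subseteq\K\otimes A$. Property (iii) also holds exactly rather than just asymptotically: $\beta_t'\otimes\mathrm{id}_A$ is a bounded linear map sending $S^2X=C_0(\R^2)\otimes X$ into $\overline{\beta_t'(C_0(\R^2))\otimes X}\subseteq\K\otimes X$, and for finite-dimensional $X$ one has $\K\otimes X=X\otimes\K$. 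Property (iv) is immediate from the Step~1 normalisation, since $\beta_t'\otimes\mathrm{id}_A$ has image in $M_{n(t)}(\C)\otimes A=M_{n(t)}(A)\subseteq\bigcup_n M_n(A)$.

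I expect the only genuinely delicate point to be the normalisation in the first step: one must choose $n(t)\to\infty$ slowly enough that $\|P_{n(t)}\beta_t(a)P_{n(t)}-\beta_t(a)\|\to0$ for every $a\in C_0(\R^2)$, which is arranged by a diagonal argument over a countable dense subset of $C_0(\R^2)$ together with control on the point-norm behaviour of $t\mapsto\beta_t(a)$ (or simply by starting from a model whose values are already finite-rank). Everything else is either classical input about Bott periodicity via asymptotic morphisms or a routine functoriality check for the minimal tensor product.
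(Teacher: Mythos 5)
Your strategy matches the paper's: fix a scalar asymptotic family $\beta_t\colon S^2\C\leadsto\K$ realizing $\bt$, arrange its values to sit in finite matrix blocks, tensor with $\mathrm{id}_A$, and read off the four properties. The paper draws its scalar model from the Elliott--Natsume--Nest Heisenberg-group deformation; your Bott--Dirac/Toeplitz source is an interchangeable classical input. You do make one genuine organizational improvement: by performing the finite-rank normalisation \emph{before} tensoring with $A$, each $\alpha_t:=\beta_t'\otimes\mathrm{id}_A$ is a concretely defined completely positive contraction, properties (ii)--(iv) then hold with zero error rather than merely asymptotically, and you avoid Definition \ref{as fam tp} altogether --- worth noting, since that definition as written requires the second tensor factor to be nuclear, a hypothesis that fails for general $A$. (Your fallback justification that $C_b([1,\infty),\K)/C_0([1,\infty),\K)$ is ``a quotient of a type I algebra, hence nuclear'' is dubious --- $C_b([1,\infty),\K)$ is not type I --- but your concrete construction does not actually need it.)

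Two points in the normalisation step need repair. First, the direction is reversed: to have $\|P_{n(t)}\beta_t(a)P_{n(t)}-\beta_t(a)\|\to0$ for each $a$ you need $n(t)\to\infty$ \emph{fast} enough, not slowly, since the matrix size at which $\beta_t(a)$ is well-approximated may itself grow with $t$. Second, and more substantively, compressing by $P_{n(t)}$ with a step function $n(t)$ destroys continuity: $t\mapsto\beta_t'(a)$ jumps at each increment of $n(t)$, so $\{\beta_t'\}$ is not an asymptotic family in the sense defined. The standard fix --- which the paper also adopts, albeit after tensoring --- is to replace the projections by a norm-continuous path of positive contractions $k_t\in M_{n(t)}(\C)\subseteq\K$ with $k_t k k_t\to k$ for all $k\in\K$, and set $\beta_t':=k_t\beta_t(\cdot)k_t$. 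The image still lies in $M_{n(t)}(\C)$ because $k_t=P_{n(t)}k_tP_{n(t)}$, so property (iv) survives, and with these adjustments the rest of your argument goes through.
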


\begin{proof}
There are several different ways to do this.  We sketch one from \cite{Elliott:1993aa} based on the representation theory of the Heisenberg group.  As in \cite[Section 4]{Elliott:1993aa}, one may canonically construct a continuous field of $C^*$-algebras over $[0,1]$ with the fibre at $0$ equal to $S^2\C$, and all other fibres equal to $\mathcal{K}$.  As explained in \cite[Appendix 2.B]{Connes:1994zh} or \cite[pages 101-2]{Connes:1990if}, such a deformation (non-canonically) gives rise to an asymptotic family $\{\alpha_t:S^2\C\to A\otimes \mathcal{K}\}$, and this family induces the map on $K$-theory described in general in \cite[Section 3]{Elliott:1993aa}, and which is shown in \cite[Theorem 4.5]{Elliott:1993aa} to be the inverse of the Bott periodicity isomorphism.

This gives us our asymptotic family $\{\alpha_t\}$ for the case $A=\C$.  In the general case, we may take $\{\alpha^A_t\}$ to be a choice of asymptotic family $\{\alpha_t\otimes \text{id}_A:S^2\C\otimes A\to \mathcal{K}\otimes A\}$ as in Definition \ref{as fam tp}.

Note that the construction of $\{\alpha_t^A\}$ is not canonical at two places: going from a deformation to an asymptotic family, and taking the tensor product.  However, any two asymptotic families $\{\alpha_t\}$, $\{\alpha_t'\}$ constructed from different choices will satisfy $\alpha_t(a)-\alpha_t'(a)\to 0$ as $t\to\infty$ for all $a\in S^2A$.  It follows that the asymptotic families so constructed satisfy (i), (ii), and (iii).  

To make it also satisfy (iv), let $\{k_t\}_{t\in [1,\infty)}$ be a continuous family of positive contractions in $\bigcup M_n(A)\subseteq \mathcal{K}$ such that for all $k\in \mathcal{K}$, $k_tkk_t-k\to 0$ as $\to\infty$.  For each $a\in A$, choose a homeomorphism $s_a:[1,\infty)\to [1,\infty)$ such that 
$$
\alpha_t^A(a)-(1\otimes k_{s_a(t)})\alpha_t(a)(1\otimes k_{s_a(t)})\to 0
$$
as $t\to\infty$.  Replacing $\alpha_t$ with the map 
$$
a\mapsto (1\otimes k_{s_a(t)})\alpha_t^A(a)(1\otimes k_{s_a(t)}),
$$
we get the result.
\end{proof}

\section{Surjectivity of the product map}\label{surj sec}

In this section, we prove the surjectivity half of Theorem \ref{main}.

\begin{theorem}\label{kun surj}
Let $A$ be a $C^*$-algebra, and say $A$ admits a uniform ideal structure over a class $\mathcal{C}$ such that for each $(C,D)\in \mathcal{C}$, $C$, $D$, and $C\cap D$ satisfy the K\"{u}nneth formula.  Then for any $C^*$-algebra $B$ with free abelian $K$-theory, the product map 
$$
\times:K_*(A)\otimes K_*(B)\to K_*(A\otimes B)
$$
is surjective.
\end{theorem}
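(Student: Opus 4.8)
The plan is to run the Mayer--Vietoris/five-lemma argument with the non-canonical boundary classes of Section~\ref{bound sec} playing the role of the boundary map. First I would reduce to $K_1$: by the suspension identifications $K_0(A\otimes B)\cong K_1(S(A\otimes B))=K_1(SA\otimes B)$, and since Corollary~\ref{tp cor} and Lemma~\ref{tens lem} show that $SA=A\otimes C_0(\R)$ again admits a uniform approximate ideal structure over $\{(SC,SD):(C,D)\in\mathcal C\}$---with $SC$, $SD$, and $SC\cap SD=S(C\cap D)$ (the last by Lemma~\ref{int prop}) still satisfying the K\"unneth formula---it is enough to prove surjectivity onto $K_1(A\otimes B)$. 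So fix $\kappa\in K_1(A\otimes B)$. Apply Proposition~\ref{v lem} in $A\otimes B$ to get $n$, an invertible $u\in M_n(\widetilde{A\otimes B})$ with $[u]=\kappa$, a bound $c$, and a finite-dimensional $X\subseteq A\otimes B$; then feed a small $\epsilon$ into Corollary~\ref{tp cor} and the approximate ideal structure hypothesis to produce $(C,D)\in\mathcal C$ and a $\delta$-ideal structure $(h\otimes 1,C\otimes B,D\otimes B)$ of $X$, hence (Proposition~\ref{v lem}) an $(\epsilon,c,C\otimes B,D\otimes B)$-lift $v$ of $u$ and a boundary class $\partial:=\partial_v(u)\in K_0((C\cap D)\otimes B)$, where I use Lemma~\ref{int prop} to identify $(C\otimes B)\cap(D\otimes B)=(C\cap D)\otimes B$.

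Next I would exploit the K\"unneth formula for $C\cap D$: since $K_*(B)$ is free abelian, $\times$ is an isomorphism $K_*(C\cap D)\otimes K_*(B)\xrightarrow{\ \sim\ }K_*((C\cap D)\otimes B)$, so $\partial=\pi(\rho)$ for a unique $\rho\in K(C\cap D)\otimes_0 K(B)$ (Definition~\ref{prod part}), which I write as $\rho=\sum_j\theta_j\otimes e_j+\sum_k\theta_k'\otimes e_k'$ with $\{e_j\},\{e_k'\}$ bases of $K_0(B),K_1(B)$ and $\theta_j\in K_0(C\cap D)$, $\theta_k'\in K_1(C\cap D)$. By Proposition~\ref{bound lem}(i) we have $\iota(\partial)=0$; combining this with Lemma~\ref{iota com} and injectivity of $\times$ for $C$ and $D$ (K\"unneth again) gives $(\iota\otimes\mathrm{id})(\rho)=0$, and comparing coefficients against the chosen bases yields $\iota(\theta_j)=0$ and $\iota(\theta_k')=0$ for all $j,k$. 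Now I build a ``correcting'' invertible over $A\otimes B$. For each $j$: Proposition~\ref{iota lem} (applied to $A,C,D$) gives an invertible $u_j$ over $\widetilde A$ with $[u_j]\in K_1(A)$ and an exact lift $v_j$ (valid for all tolerances) with $\partial_{v_j}(u_j)=\theta_j$; picking an idempotent $p_j$ over $B$ with $[p_j]=e_j$, Lemma~\ref{bound com} shows $v_j\boxtimes p_j$ is a lift of $u_j\boxtimes p_j$ with boundary $\theta_j\times e_j$ and $[u_j\boxtimes p_j]=[u_j]\times e_j$. For each $k$: apply Proposition~\ref{iota lem} instead to $SA,SC,SD$ and $\theta_k'\in K_0(SC\cap SD)$, getting an invertible $u_k'$ over $\widetilde{SA}$ with $[u_k']\in K_1(SA)=K_0(A)$ and an exact lift $v_k'$ with $\partial_{v_k'}(u_k')=\theta_k'$; then form $u_k'\boxtimes p_k'$ over $\widetilde{S^2(A\otimes B)}$ for an idempotent $p_k'$ over $SB$ with $[p_k']=e_k'$, and apply the inverse-Bott asymptotic family $\alpha_t$ of Lemma~\ref{bott lem} together with Lemma~\ref{bott bound}: for $t$ large, $\alpha_t(u_k'\boxtimes p_k')$ is an invertible over $M_\bullet(\widetilde{A\otimes B})$ representing $\beta^{-1}([u_k']\times e_k')$ with a lift $\alpha_t(v_k'\boxtimes p_k')$ whose boundary class is $\beta^{-1}(\theta_k'\times e_k')$. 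Block-summing all of these via Lemma~\ref{block lem} yields an invertible $w$ over $M_\bullet(\widetilde{A\otimes B})$ with a lift $v_w$ (with respect to $C\otimes B, D\otimes B$) such that $\partial_{v_w}(w)=\pi(\rho)=\partial$ and $[w]=\pi(\mu)$ for some $\mu\in K(A)\otimes_1 K(B)$; in particular $[w]$ lies in the image of $\times\colon K_*(A)\otimes K_*(B)\to K_1(A\otimes B)$.

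Finally I would set $z:=u\oplus w^{-1}$ and $v_z:=v\boxplus v_w^{-1}$ (using Lemma~\ref{block lem}, compatibility of $\boxplus$ and $\boxtimes$ with inverses, and the inverse statements in Proposition~\ref{iota lem} and Lemma~\ref{bott bound} to see that $v_w^{-1}$ is a lift of $w^{-1}$ with boundary $-\partial$). Then $[z]=\kappa-\pi(\mu)$ and $\partial_{v_z}(z)=\partial_v(u)+\partial_{v_w^{-1}}(w^{-1})=\partial-\partial=0$, so Proposition~\ref{bound lem}(ii) gives $[z]\in\mathrm{image}\big(\sigma\colon K_1(C\otimes B)\oplus K_1(D\otimes B)\to K_1(A\otimes B)\big)$. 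Since $\times$ is onto $K_1(C\otimes B)$ and $K_1(D\otimes B)$ (K\"unneth for $C$ and $D$), the $\sigma$-square of Lemma~\ref{iota com} carries this class into the image of $\times\colon K_*(A)\otimes K_*(B)\to K_1(A\otimes B)$, so $\kappa=\pi(\mu)+[z]$ is in that image. Running the same argument with $SA$ in place of $A$ and transporting along the suspension isomorphisms then settles $K_0(A\otimes B)$, completing the proof.

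The hardest part is the bookkeeping of the three families of constants---norm bounds of $c$-type, approximation tolerances of $\epsilon$-type, and ideal-structure parameters of $\delta$-type---through the repeated use of $\boxplus$, $\boxtimes$ and $\alpha_t$: one must arrange the quantifiers so that every ingredient is simultaneously a $(\delta,c,C\otimes B,D\otimes B)$-lift for one common $c$ and a $\delta$ small enough to invoke Proposition~\ref{bound lem}. There is an apparent circularity, since the bound $c$ for $z$ involves the lifts produced by Proposition~\ref{iota lem}, which depend on $\partial$, which depends on the ideal structure chosen with a tolerance governed by $c$; I would break it by noting that in the $C^*$-algebra setting the lifts of Proposition~\ref{iota lem} can be built from projections and unitaries, so their norm bounds are absolute (and likewise for the asymptotic-family lifts, by the ``asymptotic contractiveness'' used in the proof of Lemma~\ref{bott bound}). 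The other delicate point is keeping the degree-$(1,1)$ summand of $\rho$ under control, which is exactly what the inverse-Bott material of Section~\ref{bott sec} (Lemmas~\ref{bott bound} and \ref{bott lem}) is designed to handle.
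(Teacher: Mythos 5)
Your proof follows essentially the same route as the paper: construct $\partial_v(u)\in K_0((C\cap D)\otimes B)$ and show $\iota(\partial_v(u))=0$, lift it through the K\"unneth isomorphism for $C\cap D$, kill it by applying Proposition~\ref{iota lem} coordinatewise and assembling the resulting blocks with $\boxtimes$, $\boxplus$, and the inverse Bott asymptotic family, then finish with Proposition~\ref{bound lem}(ii) together with surjectivity of $\pi$ for $C$ and $D$; the paper merely absorbs the sign by placing $w_i^{-1}\boxtimes p_i$ and $w_i\boxtimes q_i$ directly into $\mathbf{u}$ rather than forming $z=u\oplus w^{-1}$, and reduces by suspending $B$ instead of $A$. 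One small spot to tighten: a basis element $e_j\in K_0(B)$ is in general only a formal difference $[p_j]-[q_j]$ of idempotents rather than the class of a single idempotent, so the $\boxtimes$ step needs to carry both pieces as the paper does by writing $\mu_i=[p_i]-[q_i]$; and your observation about breaking the $c$-versus-$\delta$ circularity via the absolute norm bounds available for the exact lifts of Proposition~\ref{iota lem} is correct and worth making explicit.
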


\begin{proof}
It suffices to show that the product maps
$$
\pi:K(A)\otimes_0 K(B) \to K_0(A\otimes B) \quad \text{and}\quad \pi:K(A)\otimes_1 K(B) \to K_1(A\otimes B) 
$$
of Definition \ref{prod part} are surjective for any $B$ with $K_*(B)$ free.  Replacing $B$ with its suspension, it moreover suffices to show that the second of the maps above is surjective.  Let then $\kappa$ be an arbitrary class in $K_1(A\otimes B)$.

Let $X\subseteq A\otimes B$ and $u\in M_n(\widetilde{A})$ be as in Proposition \ref{v lem} for this $\kappa$.  Using Corollary \ref{tp cor} and Lemma \ref{tens lem}, for any $\delta>0$ there is an $f$-uniform $\delta$-ideal structure of the form $(h\otimes 1,C\otimes B,D\otimes B)$ for $X$.  Fix such an ideal structure for a very small $\delta>0$ (how small will be determined by the rest of the proof).

Using Proposition \ref{v lem} we may build an element $v\in_{\delta_1} M_{2n}(\widetilde{A\otimes B})$ with the properties stated there, for some constant $\delta_1$ that tends to zero as $\delta$ tends to zero.  We may use $v$ to construct an element $\partial_vu\in K_1((C\cap D)\otimes B)$ as in Proposition \ref{bound lem} (here we use the identification $(C\cap D)\otimes B=C\otimes B\cap D\otimes B$ of Lemma \ref{int prop}), and have that if 
$$
\iota:K_0((C\cap D)\otimes B)  \to K_0(C\otimes B)\oplus K_0(D\otimes B) 
$$ 
is the map from Definition \ref{iota def}, then $\iota(\partial_vu)=0$.  

Using that the product map $\pi$ for $C\cap D$ is surjective, we may lift $\partial_vu$ to an element $\lambda$ of $K(C\cap D)\otimes_1 K(B)$.  With notation as in Definition \ref{prod part}, Lemma \ref{iota com} gives that the diagram
$$
\xymatrix{ K(C\cap D) \otimes_0 K(B) \ar[r]^-{\iota\otimes \text{id}} \ar[d]^-\pi &  K(C)\otimes_0 K(B) \oplus K(D)\otimes_0 K(B) \ar[d]^-\pi \\ K_0((C\cap D)\otimes B)  \ar[r]^-\iota & K_0(C\otimes B)\oplus K_0(D\otimes B) }
$$
commutates.  Hence 
$$
\pi((\iota\otimes \text{id})(\lambda))=\iota(\pi(\lambda))=\iota(\partial_vu)=0.
$$  
Using that the product maps for $C$ and $D$ are injective, this gives us that $(\iota\otimes \text{id})(\lambda)=0$.  

Now, we may write 
$$
\lambda=\sum_{i=1}^k \lambda_i\otimes \mu_i+\sum_{i=k+1}^m \lambda_i\otimes \mu_i
$$ 
for some $k\leq m$, where $\lambda_i\in K_0(C\cap D)$ for $i\leq k$, $\lambda_i\in K_0(S(C\cap D))$ for $i>k$, and similarly $\mu_i\in K_0(B)$ for $i\leq k$ and $\mu_i\in K_0(SB)$ for $i>k$.  As $K_*(B)$ is free, we may assume moreover that the set $\{\mu_1,...,\mu_m\}$ generates a free direct summand of $K_0(B)\oplus K_0(SB)$.  We then have that 
$$
(\iota\otimes \text{id})(\lambda)=\sum_{i=1}^m \iota(\lambda_i)\otimes \mu_i=0,
$$
which forces $\iota(\lambda_i)=0$ for each $i$ by assumption that the collection $\{\mu_1,...,\mu_m\}$ generates a free direct summand of $K_0(B)\oplus K_0(SB)$.   Applying Lemma \ref{iota lem} to each $\lambda_i$ separately gives us $l_1,...,l_m\in \N$ and invertible elements $w_1,...,w_m$ with 
$$
w_i\in \left\{\begin{array}{ll} M_{l_i}(\widetilde{A}) & i\leq k \\ M_{l_i}(\widetilde{SA}) & i> k \end{array}\right.
$$
and corresponding lifts $v_1,...,v_m$ with 
$$
v_i\in \left\{\begin{array}{ll} M_{2l_i}(\widetilde{A}) & i\leq k \\ M_{2l_i}(\widetilde{SA}) & i> k \end{array}\right.
$$
such that $\partial_{v_i}(w_i)=\lambda_i$ and $\partial_{v_i^{-1}}(w_i^{-1})=-\lambda_i$.   It will be important that there is $c>0$ such that for $i\leq k$, each $v_i$ is an $(\epsilon,c,C,D)$-lift of $u_i$ for \emph{any} $\epsilon>0$, and similarly for $i>k$, with $SC$ and $SD$ in place of $C$ and $D$.

Now, write $\mu_i=[p_i]-[q_i]$ for projections $p_i$ and $q_i$ in matrix algebras over $\widetilde{B}$ for $i\leq k$, and over $\widetilde{SB}$ for $i>k$.   Let $\{\alpha_t:S^2(A\otimes B)\leadsto A\otimes B\otimes \mathcal{K}\}$ be an asymptotic family inducing the inverse Bott map  as in Lemma \ref{bott lem}.  With notation as in Definition \ref{prod form}, let us define 
\begin{align*}
\mathbf{u}&:=u  \oplus (w_1^{-1}\boxtimes p_1)\oplus (w_1\boxtimes q_1)\oplus \cdots \oplus (w_k^{-1}\boxtimes p_k)\boxplus (w_k\boxtimes q_k) \\
&  \oplus\alpha_t(w_{k+1}^{-1}\boxtimes p_{k+1})\oplus \alpha_t(w_{k+1}\boxtimes q_{k+1})\oplus \cdots \oplus \alpha_t(w_m^{-1}\boxtimes p_m)\oplus \alpha_t(w_m\boxtimes q_m),
\end{align*}
and with notation also as in Lemma \ref{block lem} define  
\begin{align*}
\mathbf{v}:=v & \boxplus (v_1^{-1}\boxtimes p_1)\boxplus (v_1\boxtimes q_1)\boxplus \cdots \boxplus (v_k^{-1}\boxtimes p_k)\boxplus (v_k\boxtimes q_k) \\
&  \alpha_t(v_{k+1}^{-1}\boxtimes p_{k+1})\boxplus \alpha_t(v_{k+1}\boxtimes q_{k+1})\boxplus \cdots \boxplus \alpha_t(v_m^{-1}\boxtimes p_m)\boxplus \alpha_t(v_m\boxtimes q_m)
\end{align*}
which we can think of as elements of $M_{n_t}(\widetilde{A}\otimes \widetilde{B})$ and $M_{n_t}(\widetilde{A}\otimes \widetilde{B})$ respectively for some $n_t\in \N$ depending on $t$ (recall from Lemma \ref{bott lem} that each $\alpha_t:S^2(A\otimes B)\to A\otimes B\otimes\mathcal{K}$ takes image in $M_{m_t}(A\otimes B)$ for some $m_t\in \N$ depending on $t$).  Then Lemma \ref{block lem} gives that as long as our original $\delta$ was sufficiently small, we have
\begin{align*}
\partial_{\mathbf{v}}(\mathbf{u}) & =\partial_{v}(u)+\sum_{i=1}^k \partial_{v_i^{-1}\boxtimes p_i}(w_i^{-1}\boxtimes p_i)+\sum_{i=1}^k \partial_{v_i\boxtimes q_i}(w_i\boxtimes q_i) \\ 
& +\sum_{i=k+1}^m \partial_{\alpha_t(v_i^{-1}\boxtimes p_i)}\alpha_t(w_i^{-1}\boxtimes p_i)+\sum_{i=k+1}^m \partial_{\alpha_t(v_i\boxtimes q_i)}\alpha_t(w_i\boxtimes q_i).
\end{align*}
On the other hand, Lemmas \ref{bound com}, \ref{bott bound}, and \ref{bott lem} give that for suitably large $t$ this equals
\begin{align*}
\partial_{v}(u) & +\sum_{i=1}^k \partial_{v_i^{-1}\boxtimes p_i}(w_i^{-1}\boxtimes p_i)+\sum_{i=1}^k \partial_{v_i\boxtimes q_i}(w_i\boxtimes q_i) \\ 
& +\sum_{i=k+1}^m \alpha_*(\partial_{v_i^{-1}\boxtimes p_i}(w_i^{-1}\boxtimes p_i))+\sum_{i=k+1}^m \alpha_*(\partial_{v_i\boxtimes q_i}(w_i\boxtimes q_i)) \\ 
= \partial_{v}(u) & +\sum_{i=1}^k \partial_{v_i^{-1}}(w_i^{-1})\times [p_i]+\sum_{i=1}^k \partial_{v_i}(w_i)\times [q_i] \\ 
& +\sum_{i=k+1}^m \alpha_*(\partial_{v_i^{-1}}(w_i^{-1})\times [p_i])+\sum_{i=k+1}^m \alpha_*(\partial_{v_i}(w_i)\times [q_i]) \\ 
 = \partial_{v}(u) & + \sum_{i=1}^k (-\lambda_i)\times [p_i]+\sum_{i=1}^k \lambda_i\times [q_i] \\ 
&+\sum_{i=k+1}^m \bt((-\lambda_i\times [p_i])+\sum_{i=k+1}^m \bt(\lambda_i\times [q_i]) \\
= \partial_v(u) & -\sum_{i=1}^k \lambda_i\times \mu_i-\sum_{i=k+1}^m \bt(\lambda_i\times \mu_i) \\
=\partial_v(u) & -\pi(\lambda),
\end{align*}
and this last line is zero.  

We have just shown that $\partial_{\mathbf{v}}(\mathbf{u})=0$.  Noting that $[\mathbf{u}]$ defines a class in $K_1(A\otimes \widetilde{B})$ by Lemma \ref{prod nu}, it follows at this point from Proposition \ref{bound lem} that (as long as the original $\delta>0$ was suitably small) there exists $\nu\in K_1(C\otimes \widetilde{B})\oplus K_1(D\otimes \widetilde{B})$ such that $\sigma(\nu)=[\mathbf{u}]$.  Moreover, if we define 
$$
\xi:=\sum_{i=1}^m [w_i]\otimes [p_i]+\sum_{i=1}^m [w_i^{-1}]\otimes [q_i]= \sum_{i=1}^m [w_i]\otimes \mu_i\in K(A)\otimes_1 K(B)
$$
then  we have by definition of $\mathbf{u}$ that
$$
\sigma(\nu)=[\mathbf{u}]=[u]-\pi(\xi).
$$
Using surjectivity of the product maps for $C$ and $D$, and with notation as in Definition \ref{prod part}, we may lift $\nu$ to some $\zeta\in K(C)\otimes_1 K(B)\oplus K(D)\otimes_1 K(B)$.  Lemma \ref{iota com} gives commutativity of the diagram 
$$
\xymatrix{ K(C)\otimes_1 K(B)\oplus K(D)\otimes_1 K(B) \ar[r]^-{\sigma\otimes \text{id}} \ar[d]^-\pi & K(A)\otimes_1 K(B) \ar[d]^-\pi \\ K_1(C\otimes B)\oplus K_1(D\otimes B) \ar[r]^-\sigma & K_1(A\otimes B) },
$$
which implies that 
$$
[u]=\pi(\xi)+\sigma(\nu)=\pi(\xi)+\sigma(\pi(\zeta))=\pi(\xi)+\pi((\sigma\otimes \text{id})(\zeta))=\pi(\xi+(\sigma\otimes \text{id})(\zeta)), 
$$
so we have that $[u]$ is in the image of the map $\pi$, and are done.
\end{proof}

\section{Injectivity of the product map}\label{inj sec}

Finally, in this section we complete the main part of the paper by proving the injectivity half of Theorem \ref{main}.

\begin{theorem}\label{kun inj}
Let $A$ be a $C^*$-algebra, and say $A$ admits a uniform approximate ideal structure over a class $\mathcal{C}$ such that for each $(C,D)\in \mathcal{C}$, $C$, $D$, and $C\cap D$ satisfy the K\"{u}nneth formula.  Then for any $C^*$-algebra $B$ with free abelian $K$-theory, the product map 
$$
\times:K_*(A)\otimes K_*(B)\to K_*(A\otimes B)
$$
is injective.
\end{theorem}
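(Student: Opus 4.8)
We follow the pattern of the proof of Theorem~\ref{kun surj}, but using exactness at position~(III) (Proposition~\ref{sigma lem}) in place of exactness at~(I), and interchanging the roles of surjectivity and injectivity of the product maps for $C$, $D$ and $C\cap D$. As in Theorem~\ref{kun surj}, it suffices to prove that $\pi\colon K(A)\otimes_1 K(B)\to K_1(A\otimes B)$ is injective for every $C^*$-algebra $B$ with free abelian $K$-theory; the $K_0$ statement follows by replacing $B$ with $SB$ and using that $\pi$ is natural with respect to suspension and Bott periodicity. So fix such a $B$ and $\xi\in K(A)\otimes_1 K(B)$ with $\pi(\xi)=0$. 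Present $\xi=\sum_{i=1}^m[w_i]\otimes\mu_i$, where for $i\le k$ one has invertibles $w_i$ over $\widetilde A$ and $\mu_i=[p_i]-[q_i]\in K_0(B)$, and for $i>k$ the same with $SA$, $SB$ in place of $A$, $B$; using that $K_*(B)$ is free, arrange that $\{\mu_1,\dots,\mu_m\}$ generates a free direct summand of $K_0(B)\oplus K_0(SB)$, so that $\xi=0$ if and only if $[w_i]=0$ for all $i$.

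Next, the construction, parallel to Theorem~\ref{kun surj}. Applying Proposition~\ref{v lem} to each $[w_i]$ (replacing $w_i$ by a homotopic invertible, which changes neither $[w_i]$ nor $\xi$) we obtain a common $c>0$ and finite-dimensional subspaces of $A$ and $SA$; let $X\subseteq A$ be a single finite-dimensional subspace large enough for all of them (and for the later appeals to Proposition~\ref{sigma lem} and Corollary~\ref{tp cor}), and fix a very small $\delta>0$ and an $f$-uniform $\delta$-ideal structure $(h,C,D)$ for $X$ with $(C,D)\in\mathcal C$, which exists since $A$ admits a uniform approximate ideal structure over $\mathcal C$. By Corollary~\ref{tp cor} and Lemmas~\ref{tens lem} and~\ref{int prop}, $(h\otimes 1,C\otimes B,D\otimes B)$ is an $f$-uniform ideal structure of the relevant subspaces of $A\otimes B$, with $(C\cap D)\otimes B=(C\otimes B)\cap(D\otimes B)$. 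Proposition~\ref{v lem} yields $(\epsilon,c,C,D)$-lifts $v_i$ of $w_i$ and boundary classes $\partial_{v_i}(w_i)$ with $\iota(\partial_{v_i}(w_i))=0$ (Proposition~\ref{bound lem}(i), Lemma~\ref{bound lem inv}). With $\{\alpha_t\}$ the inverse-Bott asymptotic family of Lemma~\ref{bott lem}, and using the operations $\boxplus$ and $\boxtimes$, set
$$
\mathbf u:=\Big(\underset{i\le k}{\boxplus}\,(w_i\boxtimes p_i)\boxplus(w_i^{-1}\boxtimes q_i)\Big)\boxplus\Big(\underset{i>k}{\boxplus}\,\alpha_t(w_i\boxtimes p_i)\boxplus\alpha_t(w_i^{-1}\boxtimes q_i)\Big),
$$
and define $\mathbf v$ by the same formula with each $v_i$ in place of $w_i$. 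By Lemmas~\ref{block lem}, \ref{bound com}, \ref{bott bound} and~\ref{bott lem}, for $\delta$ small and $t$ large $\mathbf v$ is an $(\epsilon,c,C\otimes B,D\otimes B)$-lift of $\mathbf u$, one has $[\mathbf u]=\pi(\xi)=0$ in $K_1(A\otimes B)$ (via Lemma~\ref{prod nu}), and
$$
\partial_{\mathbf v}(\mathbf u)=\pi(\eta),\qquad \eta:=\sum_i\partial_{v_i}(w_i)\otimes\mu_i\in K(C\cap D)\otimes_0 K(B).
$$

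The heart of the argument is to deduce from $[\mathbf u]=0$ that $\partial_{\mathbf v}(\mathbf u)=0$. For this one uses that $\mathbf u\oplus 1_N$ is homotopic to $1$: approximating this homotopy by products of exponentials of elements of $M_\bullet(A\odot B)$ one may (enlarging $X$ and shrinking $\delta$) take it to stay $\delta$-close to $1+M_\bullet(X)$ with controlled norms, while conditions (ii)--(iii) of Definition~\ref{lift def} for $\mathbf v$ record an approximate factorization of $\mathbf u\oplus 1$ through $\widetilde{C\otimes B}$ and $\widetilde{D\otimes B}$; feeding both into Proposition~\ref{sigma lem} for $A\otimes B$ and unwinding the construction of $\mathbf v$ shows $\partial_{\mathbf v}(\mathbf u)=0$. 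Hence $\pi(\eta)=0$, so $\eta=0$ by injectivity of the product map for $C\cap D$, so $\partial_{v_i}(w_i)=0$ for every $i$ since $\{\mu_i\}$ is free. Proposition~\ref{bound lem}(ii) then gives $[w_i]=\sigma(a_i,b_i)$ with explicit $a_i\in K_1(C)$, $b_i\in K_1(D)$ (resp.\ over $SC$, $SD$), so $\xi=(\sigma\otimes\mathrm{id})(\zeta)$ with $\zeta:=\sum_i(a_i,b_i)\otimes\mu_i$. As $\pi(\xi)=0$, Lemma~\ref{iota com} gives $\sigma(\pi(\zeta))=0$, and a second application of Proposition~\ref{sigma lem} (using a null-homotopy supplied by $[\mathbf u]=0$ together with the factorizations $w_i\oplus 1\approx u_{C,i}u_{D,i}$ just obtained) shows $\pi(\zeta)=\iota[x]$ for some invertible $x$ over $\widetilde{(C\cap D)\otimes B}$. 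By surjectivity of the product for $C\cap D$ write $[x]=\pi(\theta)$ with $\theta\in K(C\cap D)\otimes_1 K(B)$; Lemma~\ref{iota com} again gives $\pi(\zeta)=\pi((\iota\otimes\mathrm{id})\theta)$, so $\zeta=(\iota\otimes\mathrm{id})\theta$ by injectivity of the products for $C$ and $D$. Therefore $\xi=(\sigma\otimes\mathrm{id})(\iota\otimes\mathrm{id})(\theta)=((\sigma\circ\iota)\otimes\mathrm{id})(\theta)=0$, as required.

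I expect the main obstacle to be exactly the invocation of Proposition~\ref{sigma lem}: its hypotheses require one to fix the finite-dimensional subspace $X$ --- large enough to swallow a null-homotopy of $\mathbf u\oplus 1$ up to $\delta$ as well as all the algebraic data --- \emph{before} choosing the ideal structure, and one must carefully match the near-factorization recorded by the lift $\mathbf v$ with the hypothesis $\|u_0-u_Cu_D\oplus 1\|<\delta$ and then trace the conclusion back through the definition of the boundary class. This bookkeeping, which is the analogue in the present framework of \cite[Lemma 2.9]{Oyono-Oyono:2016qd}, is where essentially all the difficulty lies; everything else is formal, using the compatibility Lemmas~\ref{bound com}, \ref{iota com}, \ref{bott bound} and the identity $\sigma\circ\iota=0$.
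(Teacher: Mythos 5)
Your overall architecture matches the paper's: reduce to $\pi\colon K(A)\otimes_1 K(B)\to K_1(A\otimes B)$; present $\xi$ as $\sum_i[w_i]\otimes\mu_i$ with $\{\mu_i\}$ part of a basis of $K_0(B)\oplus K_0(SB)$; build lifts $v_i$ from Proposition~\ref{v lem}; form the block invertible $\mathbf u$ (with the suspension part pushed through the inverse-Bott asymptotic family of Lemma~\ref{bott lem}) and the corresponding lift $\mathbf v$; compute $\partial_{\mathbf v}(\mathbf u)=\pi(\eta)$ via Lemmas~\ref{block lem}, \ref{bound com}, \ref{bott bound}; deduce $\eta=0$ from injectivity of the product for $C\cap D$ and freeness; pass to a factorization via Proposition~\ref{bound lem}(ii); apply Proposition~\ref{sigma lem}; finish with $\sigma\circ\iota=0$. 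The paper's proof follows exactly this outline.

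There is, however, a genuine gap at the step where you claim to show $\partial_{\mathbf v}(\mathbf u)=0$. You propose to feed the null-homotopy of $\mathbf u$ together with the near-factorization coming from conditions (ii)--(iii) of Definition~\ref{lift def} into Proposition~\ref{sigma lem}. But the conclusion of Proposition~\ref{sigma lem} is the existence of an invertible $x$ over $\widetilde{C\cap D}$ with $\iota[x]=([u_C],[u_D])$ in $K_1(C)\oplus K_1(D)$; it says nothing about any $K_0$ class, and in particular does not produce, or interact with, the boundary class $\partial_{\mathbf v}(\mathbf u)\in K_0$. There is no way to read off $\partial_{\mathbf v}(\mathbf u)=0$ from that output: Proposition~\ref{bound lem}(ii) is an implication $\partial=0\Rightarrow$ factorization, and neither it nor Proposition~\ref{sigma lem} gives the converse. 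What is actually needed here -- and what the paper does -- is a direct homotopy argument: the null-homotopy of $u_t$ is pushed through the explicit lift construction of Proposition~\ref{v lem} to produce a homotopy, through idempotents close to matrices over $C\cap D$, from $\mathbf v\bigl(\begin{smallmatrix}1&0\\0&0\end{smallmatrix}\bigr)\mathbf v^{-1}$ to the model idempotent, forcing the boundary class to vanish. This is where the finite-dimensional subspaces encoding the null-homotopy (the paper's $X_3$, $X_4$) must be fixed \emph{before} the ideal structure is chosen -- you flag this issue, but attach it to the wrong step. Proposition~\ref{sigma lem} is used only once, in the second half of the argument, to show $\pi(\lambda)=\iota(\mu)$; your use of it there is correct. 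Repair the $\partial_{\mathbf v}(\mathbf u)=0$ step with the direct homotopy argument and the rest of your proposal goes through.
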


\begin{proof}
With notation as in Definition \ref{prod part}, it suffices to show that the maps 
$$
\pi:K(A)\otimes_0 K(B)\to K_0(A\otimes B) \quad \text{and}\quad \pi:K(A)\otimes_1 K(B)\to K_1(A\otimes B)
$$
defined there are injective for any $B$ with $K_*(B)$ free abelian.  On replacing $B$ with its suspension, it suffices just to show injectivity in the $K_1$ case.  

Consider then an element $\kappa\in K(A)\otimes_1 K(B)$ such that $\pi(\kappa)=0$.  We will show that $\kappa=0$.  Fix a very small $\delta>0$, to be determined by the rest of the proof.

We may assume $\kappa$ is of the form
$$
\kappa=\sum_{i=1}^k \kappa_i\otimes ([p_i]-[q_i])+\sum_{i=k+1}^m \kappa_i\otimes ([p_i]-[q_i]) ,
$$
where for some $n\in \N$, each $\kappa_i$ is an element of $K_1(A)$ for $i\leq k$ or of $K_1(SA)$ for $i>k$, and each pair $p_i,q_i$ consists of projections in $M_n(\widetilde{B})$ for $i<k$ or in $M_n(\widetilde{SB})$ for $i>k$, so that the difference is in $M_n(B)$ or $M_n(SB)$ as appropriate, and so that the collection $([p_i]-[q_i])_{i=1}^n$ constitutes part of a basis for the free abelian group $K_0(B)\oplus K_0(SB)$.   Using Proposition \ref{v lem}, we may assume that for $i\leq k$ there is a finite-dimensional subspace $X_{0,i}$ of $A$ and invertible $u_i\in M_n(\widetilde{A})$ with the properties stated there for $\kappa_i$ and $\delta$; and similarly for each $i>k$, a finite-dimensional subspace $X_{1,i}$ of $SA$ and invertible $u_i\in M_n(\widetilde{SA})$ with the properties stated in Proposition \ref{v lem} with respect to $\kappa_i$ and $\delta$.

With notation `$\boxtimes$' as in Definition \ref{prod form}, 
and with 
$$
\{\alpha_t:S^2(\widetilde{A}\otimes \widetilde{B})\to \widetilde{A}\otimes \widetilde{B}\otimes \mathcal{K}\}
$$
an asymptotic family for $\widetilde{A}\otimes \widetilde{B}$ as in Lemma \ref{bott lem} that realizes the inverse Bott periodicity isomorphism, define
\begin{equation}\label{im alpha}
u_t:=\bigoplus_{i=1}^k u_i\boxtimes p_i\oplus \bigoplus_{i=1}^k u_i^{-1}\boxtimes q_i +\bigoplus_{i=k+1}^m \alpha_t(u_i\boxtimes p_i)\oplus \bigoplus_{i=k+1}^m \alpha_t(u_i^{-1}\boxtimes q_i).
\end{equation}
Then for all $t$ suitably large, $[u_t]$ defines a class in $K_1(\widetilde{A}\otimes \widetilde{B})$, which we may  consider as a class in $K_1(A\otimes B)$ thanks to Lemma \ref{prod nu}.  

By definition of $\pi$, there is $t_0\in [1,\infty)$ such that $\pi(\kappa)=[u_{t}]$ for all $t\geq t_0$, and so that the map
$$
[t_0,\infty)\to \bigcup_{n=1}^\infty M_n(A), \quad t\mapsto u_t
$$
is a continuous path of invertibles.  As $[u_{t_0}]=\pi(\kappa)=0$, we may assume moreover that there exist $l,p\in \N$ and a homotopy $\{w_s\}_{s\in [0,1]}$ of invertible elements in $M_p(\widetilde{A}\otimes \widetilde{B})$ such that $w_0\oplus 1_l =u_{t_0}$, such that  $w_1=1_p$, such that each $w_s$ and $w_s^{-1}$ are in $\{1+x\in M_p(\widetilde{A}\otimes \widetilde{B})\mid x\in M_p(A\otimes\widetilde{B})\}$.  Let $X_3$ be a finite-dimensional subspace of $A\otimes\widetilde{B}$ such that all $w_s$ and $w_s^{-1}$ are $\delta$-in $\{1+x\in M_p(\widetilde{A}\otimes \widetilde{B})\mid x\in M_p(X_3)\}$.  Using part \eqref{bl fd cont} of Lemma \ref{bott lem}, there is moreover a finite-dimensional subspace $X_4$ of $A\otimes\widetilde{B}$ such that for all $t\geq t_0$ there exists $n_t\in \N$ such that $u_t$ and $u_t^{-1}$ are $\delta$-in $M_{n_t}(X_4)$.   

Now, using Corollary \ref{tp cor} and Lemma \ref{tens lem}, for any $\delta>0$ there exists a triple $(h,C,D)$ such that $(1\otimes h,SC,SD)$ is an $f$-uniform $\delta$-ideal structure of $X_1$, and such that $(h\otimes 1,C\otimes \widetilde{B},D\otimes \widetilde{B})$ is an $f$-uniform $\delta$-ideal structure of both $X_3$ and $X_4$.    If $\delta$ is small enough, Proposition \ref{v lem} and Lemma \ref{bound lem inv} then let us build for each $i$ an invertible element 
$$
v_i\in \left\{\begin{array}{ll} M_{2n}(\widetilde{A}) & 1\leq i\leq k \\M_{2n}(\widetilde{SA}) & k+1\leq i\leq m\end{array}\right.
$$ 
such that for some $c>0$ and $\delta_1$ that tends to zero as $\delta$ tends to zero, we have that $v_i$ and $v_i^{-1}$ is a $(\delta_1,c,C,D)$ lift of $u_i$ and $u_i^{-1}$ respectively for $1\leq i\leq k$, and so that $v_i$ and $v_i^{-1}$ are a $(\delta_1,c,SC,SD)$ lift for $u_i$ and $u_i^{-1}$ respectively for $k+1\leq i\leq m$.  With notation as in Lemma \ref{block lem}, define also 
$$  
v:=\bigboxplus_{i=1}^k (v_i\boxtimes p_i)\boxplus \bigboxplus_{i=k}^n (v_i^{-1}\boxtimes q_i)\quad \text{and}\quad v_S:=\bigboxplus_{i=k+1}^m (v_i\boxtimes p_i)\boxplus \bigboxplus_{i=k+1}^m (v_i^{-1}\boxtimes q_i),
$$
which are elements of matrix algebras over $\widetilde{A}\otimes \widetilde{B}$ and $\widetilde{SA}\otimes \widetilde{SB}$ respectively.  Define also
$$
u:=\bigoplus_{i=1}^k u_i\boxtimes p_i\oplus \bigoplus_{i=1}^k u_i^{-1}\boxtimes q_i 
$$
and 
$$
u_S:=\bigoplus_{i=k+1}^m u_i\boxtimes p_i\oplus \bigoplus_{i=k+1}^m u_i^{-1}\boxtimes q_i.
$$
Then as long as $\delta>0$ is sufficiently small, Lemmas \ref{block lem} and \ref{bound com} give boundary classes $\partial_v u\in K_0((C\cap D)\otimes \widetilde{B})$ and $\partial_{v_S}(u_S)\in K_0(S(C\cap D) \otimes \widetilde{SB})$.  

Now, with $\beta^{-1}$ the inverse Bott periodicity map, the element 
$$
(\text{id}\oplus \beta^{-1})(\partial_vu,\partial_{v_S}(u_S))\in K_0((C\cap D)\otimes \widetilde{B})
$$
is necessarily zero.  Indeed, using Lemmas \ref{bott bound} and \ref{block lem}, this element is represented by
$$
\partial_vu+\partial_{\alpha_t{v_S}}(\alpha_t(u_S)))=\partial_{v\boxplus \alpha_t(v_S)}(u\oplus \alpha_t(u_S))
$$
for suitably large $t$.  With notation as in line \eqref{im alpha}, this equals $\partial_{v\boxplus \alpha_t(v_S)}(u_t)$.  Now, we can drag a homotopy between $u_t$ and $1$ through the construction of Proposition \ref{v lem} to produce a homotopy between this element and $1$ (this uses our choice of $(h,C,D)$, and the fact that there is a homotopy through invertibles between $u_t$ and $1$ that is close to $(1_{n_t}+M_{n_t}(X_3))\cup (1_{n_t}+M_{n_t}(X_4))$ for some appropriate $n_t\in \N$).

Lemmas \ref{bound com} and \ref{block lem} give then that 
$$
\pi\Bigg(\sum_{i=1}^m \partial_{v_i} (u_i)\otimes ([p_i]-[q_i])\Bigg)=(\text{id}\oplus \beta^{-1})(\partial_vu,\partial_{v_S}(u_S))
$$
whence the class
$$
\pi\Bigg(\sum_{i=1}^m \partial_{v_i} (u_i)\otimes ([p_i]-[q_i])\Bigg)\in K_0((C\cap D)\otimes \widetilde{B})
$$
is zero also.   Hence by injectivity of the product map for $C\cap D$, we have that 
$$
\sum_{i=1}^m \partial_{v_i} (u_i)\otimes ([p_i]-[q_i])
$$
is zero in $K(C\cap D)\otimes_0 K(B)$. Using the assumption that the collection $([p_i]-[q_i])_{i=1}^m$ forms part of a basis for $K_0(B)\oplus K_0(SB)$, we get that $\partial_{v_i}(u_i)=0$ in $K_0(C\cap D)\oplus K_0(S(C\cap D))$ for each $i$.  Hence Proposition \ref{bound lem} gives us $j,l\in \N$ and invertible elements 
$$
s_i\in \left\{\begin{array}{ll} M_{j+l}(\widetilde{D}) & 1\leq i\leq k \\ M_{j+l}(\widetilde{SD}) & k+1\leq i\leq m \end{array}\right.
$$
such that for each $i\in \{1,...,k\}$ we have that $(u_i\oplus 1_l)s_i^{-1}$ is in $M_{j+l}(\widetilde{C})$, and such that for each $i\in \{k+1,...,m\}$ we have that $(u_i\oplus 1_l)s_i^{-1}$ is in $M_{j+l}(\widetilde{SC})$.  Applying the same reasoning with the roles of $u_i$ and $u_i^{-1}$ interchanged, we similarly get invertible elements 
$$
r_i\in \left\{\begin{array}{ll} M_{j+l}(\widetilde{C}) & 1\leq i\leq k \\ M_{j+l}(\widetilde{SC}) & k+1\leq i\leq m \end{array}\right.
$$
such that for each $i\in \{1,...,k\}$, we have that $(u_i^{-1}\oplus 1_l)r_i^{-1}$ is in $M_{j+l}(\widetilde{C})$, and for each $i\in \{k+1,...,m\}$, we have that $(u_i^{-1}\oplus 1_l)r_i^{-1}$ is in $M_{j+l}(\widetilde{SC})$. 

Now, consider the class $\lambda\in \big(K(C)\otimes_1 K(B)\big)\oplus \big(K(D)\otimes_1 K(B)\big)$ defined by $\lambda=(\lambda_C,\lambda_D)$ where 
\begin{align*}
\lambda_C :=\sum_{i=1}^m [(u_i\oplus 1_l)s_i^{-1}]\otimes [p_i] + \sum_{i=1}^m [(u_i^{-1}\oplus 1_l)r_i^{-1}]\otimes [q_i]  
\end{align*}
and 
$$
\lambda_D:=\sum_{i=1}^m [s_i]\otimes [p_i] + \sum_{i=1}^m  [r_i]\otimes [q_i],
$$
and note that $\kappa=\sigma(\lambda)$.  The image of $\lambda$ under the product map 
\begin{align*}
\times:K(C) & \otimes_1 K(B) \oplus K(D)\otimes_1 K(B) \\ &\to \big(K_1(C\otimes B)\oplus K_1(SC\otimes SB)\big)\oplus \big(K_1(D\otimes B)\oplus K_1(SD\otimes SB)\big)
\end{align*}
is represented by the invertible element
\begin{align*}
x:=\Bigg(\bigoplus_{i=1}^m \big((u_i\oplus 1_l)s_i^{-1}\boxtimes p_i\big) \oplus \bigoplus_{i=1}^m  \big((u_i^{-1} & \oplus 1_l)r_i^{-1}\boxtimes q_i\big) ~,~\\ & \bigoplus_{i=1}^m \big(s_i\boxtimes p_i\big) \oplus  \bigoplus_{i=1}^m \big(r_i\boxtimes q_i\big)\Bigg). 
\end{align*}
We have that $\pi(\lambda)$ equals the image of the class above under the map 
\begin{align*}
\text{id}\oplus \beta^{-1}:\big(K_1(C\otimes B)\oplus K_1(SC\otimes SB)\big) & \oplus \big(K_1(D\otimes B)\oplus K_1(SD\otimes SB)\big) \\ & \to K_1(C\otimes B)\oplus K_1(D\otimes B),
\end{align*}
which, with notation as in Lemma \ref{bott lem}, is represented concretely by the invertible element $(\text{id}\oplus \alpha_t)(x)$ for all suitably large $t$.  On the other hand, using almost multiplicativity of the asymptotic family $\{\alpha_t\}$ and comparing this with the formula for $u_t$ in line \eqref{im alpha}, we see that $u_t$ can be made arbitrarily close to 
\begin{align*}
(\text{id}\oplus \alpha_t)\Bigg(\Big(\bigoplus_{i=1}^m \big((u_i\oplus 1_l)s_i^{-1}\boxtimes p_i\big) & \oplus \bigoplus_{i=1}^m  \big((u_i^{-1}\oplus 1_l)r_i^{-1}\boxtimes q_i\big) \Big) \\ &~\cdot ~\Big(\bigoplus_{i=1}^m \big(s_i\boxtimes p_i\big) \oplus  \bigoplus_{i=1}^m \big(r_i\boxtimes q_i\big)\Big)\Bigg)
\end{align*}
by increasing $t$, and up to taking block sum with $1_q$ for some $q$ depending on $t$.

Now, for each fixed $t$ there is $n_t\in \N$ such that $u_t$ is homotopic to the identity through invertibles that are $\delta$-in 
$$
\{1+x\in M_{n_t}(A\otimes \widetilde{B})\mid x\in M_{n_t}(X_3)\cup M_{n_t}(X_4)\}
$$
via the concatenation of the homotopies $\{u_s\}_{s\in [t_0,t]}$ and $\{w_s\oplus 1_{n_t-p}\}_{s\in [0,1]}$ and our assumption on $(h,C,D)$.  We are thus in a position to apply Proposition \ref{sigma lem} to conclude that there exists a class $\mu\in K_1((C\cap D)\otimes \widetilde{B})$ such that $\iota(\mu)=\pi(\lambda)$.  Using surjectivity of the product map for $C\cap D$, we may lift $\mu$ to some element $\nu$ of $K(C\cap D)\otimes_1 K(\widetilde{B})$.  Using Lemma \ref{iota com}, we have that 
$$
\pi(\lambda)=\iota(\mu)=\iota(\pi(\nu))=\pi(\iota(\nu)).
$$
Hence by injectivity of the product maps for $C$ and $D$, this forces $\lambda=\iota(\nu)$.  Finally, we have that $\kappa=\sigma(\lambda)$ and so
$$
\kappa=\sigma(\lambda)=\sigma(\iota(\nu)).
$$
However, $\sigma\circ \iota$ is clearly the zero map on $K$-theory, so we are done.
\end{proof}

\appendix

\section{Nuclear dimension}\label{nd1 app}

In this appendix, we give examples of weak approximate ideal structures coming from nuclear dimension one.  See \cite{Winter:2010eb} for background on the theory of nuclear dimension.

For the statement of the next result, if $A$ is a $C^*$-algebra, let $A_\infty$ denote the quotient $\prod_\N A / \oplus_\N A$ of the product of countably many copies of $A$ by the direct sum.  If $(B_n)$ is a sequence of $C^*$-suablgebras of $A$, we let $B_\infty$ denote the $C^*$-subalgebra $\prod_\N B_n / \oplus_\N B_n$ of $A_\infty$.

The following fact was told to me by Wilhelm Winter\footnote{Professor Winter probably knows a better proof!}.

\begin{proposition}\label{nd1}
Let $A$ be a separable\footnote{Not really necessary, but the statement would be a little fiddlier otherwise.} unital $C^*$-algebra of nuclear dimension one.  Then there exist
\begin{enumerate}[(i)]
\item a positive contraction $h\in A_\infty\cap A'$, and
\item sequences $(C_n)$, $(D_n)$ of $C^*$-subalgebras of $A$
\end{enumerate}
such that:
\begin{enumerate}
\item each $C_n$ and each $D_n$ is a quotient of a cone over a finite-dimensional $C^*$-algebra, 
\item for all $a\in A$, $ha\in C_\infty$, $(1-h)a\in D_\infty$,
\end{enumerate}
\end{proposition}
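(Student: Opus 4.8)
The plan is to read the data of the proposition off the defining completely positive approximations of $\dim_{\mathrm{nuc}}A\le 1$, using the structure theorem for order zero maps to recognise the building blocks as quotients of cones.

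\medskip
\emph{Step 1: the building blocks.} Fix a dense sequence $(a_k)_{k\in\N}$ in $A$ with $a_1=1$. For each $n$, applying the definition of $\dim_{\mathrm{nuc}}A\le 1$ to $\{a_1,\dots,a_n\}$ with tolerance $1/n$ produces a finite-dimensional $C^*$-algebra $F_n=F_n^{(0)}\oplus F_n^{(1)}$, a completely positive contractive (c.p.c.)\ map $\psi_n=(\psi_n^{(0)},\psi_n^{(1)})\colon A\to F_n$, and c.p.c.\ order zero maps $\varphi_n^{(i)}\colon F_n^{(i)}\to A$ with
$$
\big\|\varphi_n^{(0)}\psi_n^{(0)}(a)+\varphi_n^{(1)}\psi_n^{(1)}(a)-a\big\|<\tfrac1n\qquad(a\in\{a_1,\dots,a_n\}).
$$
By the structure theorem of Winter and Zacharias, each $\varphi_n^{(i)}$ has the form $\varphi_n^{(i)}(x)=\rho_n^{(i)}(\iota_0\otimes x)$ for a $*$-homomorphism $\rho_n^{(i)}\colon C_0(0,1]\otimes F_n^{(i)}\to A$, where $\iota_0(t)=t$; put $h_n^{(i)}:=\varphi_n^{(i)}(1_{F_n^{(i)}})=\rho_n^{(i)}(\iota_0\otimes 1)$, a positive contraction commuting with the range of $\rho_n^{(i)}$. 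Define
$$
C_n:=\overline{\rho_n^{(0)}\big(C_0(0,1]\otimes F_n^{(0)}\big)}\,,\qquad D_n:=\overline{\rho_n^{(1)}\big(C_0(0,1]\otimes F_n^{(1)}\big)}\,,
$$
which, being closed ranges of $*$-homomorphisms out of cones over finite-dimensional $C^*$-algebras, are quotients of such cones; this is condition (1). Note $f(h_n^{(0)})\varphi_n^{(0)}(x)\in C_n$ for all $f\in C_0(0,1]$, and symmetrically for $D_n$.

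\medskip
\emph{Step 2: reduction of (2) and of centrality to an overlap estimate.} The candidate is $h=[(h_n^{(0)})_n]\in A_\infty$. Using the approximation identity together with the facts that $h_n^{(0)}$ is central in $C_n$ and multiplies $C_n$ into itself, one checks that $h\in A_\infty\cap A'$ and that $ha\in C_\infty$, $(1-h)a\in D_\infty$ for all $a$ in the dense set (hence for all $a\in A$), \emph{provided} the ``cross terms''
$$
h_n^{(0)}\,\varphi_n^{(1)}(y),\qquad (1-h_n^{(0)})\,\varphi_n^{(0)}(x),\qquad \big[h_n^{(0)},\,\varphi_n^{(1)}\psi_n^{(1)}(a)\big]
$$
tend to $0$ as $n\to\infty$ (for $x,y$ ranging over generators and $a$ in the dense set). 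So the whole statement reduces to making the two one-dimensional pieces $C_n$, $D_n$ asymptotically orthogonal along their interface, i.e.\ near the spectral value $h_n^{(0)}=\tfrac12$.

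\medskip
\emph{Step 3: killing the overlap — the main obstacle.} This is the crux, and where $\dim_{\mathrm{nuc}}A\le 1$ genuinely enters: for a single c.p.\ approximation the cross terms need \emph{not} be small — for contractions $y$ one only gets $\|h_n^{(0)}\varphi_n^{(1)}(y)\|\le c_0$ with $c_0$ a fixed positive constant, and in general this cannot be improved. The plan to remedy this is: first normalise, using unitality of $A$, to approximations with $\psi_n$ unital and $\varphi_n:=\varphi_n^{(0)}+\varphi_n^{(1)}$ c.p.c., so that $h_n^{(0)}+h_n^{(1)}=\varphi_n(1_{F_n})\le 1$ and $h_n^{(0)}+h_n^{(1)}\to 1$; then introduce a spectral gap by replacing $h_n^{(0)}$ with $g(h_n^{(0)})$ for a fixed continuous $g\colon[0,1]\to[0,1]$ that is $1$ on $[\tfrac23,1]$ and $0$ on $[0,\tfrac13]$, while simultaneously replacing the $n$-th approximation by a \emph{finer} one that also approximates a generating set of the previously built $C_m,D_m$ ($m<n$), so that the part of $\varphi_n^{(0)}\psi_n^{(0)}(a)$ supported in the spectral interval $[\tfrac13,\tfrac23]$ of $h_n^{(0)}$ — and hence the residual cross terms and commutators — is squeezed to $0$. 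Iterating this ``approximate, then re-approximate the pieces'' step and diagonalising over $n$ should produce positive contractions $h_n\in A$ with $\|[h_n,a]\|\to 0$, $d(h_na,C_n)\to 0$ and $d((1-h_n)a,D_n)\to 0$ for all $a$ in the dense set; passing to $A_\infty$ (equivalently, Kirchberg's $\epsilon$-test) then yields the required $h\in A_\infty\cap A'$ with $ha\in C_\infty$ and $(1-h)a\in D_\infty$. The hard part is precisely this refinement — keeping the two colours covering $A$ while forcing them apart on the interface; the commuting supports furnished by the order zero structure of each colour are the lever that makes it work, and this is the delicate point I would expect Winter's argument to turn on.
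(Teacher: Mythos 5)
There is a genuine gap at your Step~3, and you essentially acknowledge it yourself: you reduce the proposition to the vanishing of the cross terms $h_n^{(0)}\varphi_n^{(1)}(y)$, note correctly that this is \emph{not} automatic for a single c.p.~approximation, sketch an iterative ``approximate, then re-approximate the pieces'' scheme, and then write that ``this is the delicate point.'' That delicate point is the whole proposition, and your sketch does not establish it. In particular, it is not clear that inserting a bump function $g(h_n^{(0)})$ and re-approximating the already-constructed $C_m,D_m$ with finer c.p.~approximations forces the interface overlap to zero: making the new approximation fine on generators of $C_m$ and $D_m$ gives no control on how the new supporting element $h_n^{(0)}$ interacts with the old hereditary subalgebras, and the issue of the middle spectral band $[\tfrac13,\tfrac23]$ recurs at each stage.

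The paper sidesteps the overlap problem entirely by working from the start in $A_\infty$ and exploiting unitality in a different way. Passing $\psi_n$ and $\phi_n^{(i)}$ to the sequence algebra gives ccp order-zero maps $\theta^{(0)},\theta^{(1)}\colon A\to A_\infty$ whose sum is \emph{exactly} the diagonal inclusion (not just approximately). Writing $\theta^{(i)}(a)=h_i\pi^{(i)}(a)$ via the Winter–Zacharias structure theorem (now applied inside $A_\infty$, so there is no $1/n$ error), one gets the exact identity $a=h\pi^{(0)}(a)+(1-h)\pi^{(1)}(a)$ with $h:=h_1$, $1-h=h_2$. The key lemma is then an extreme-point argument: reducing modulo the ideal $I_\lambda$ generated by the maximal ideal of $C^*(h,1)$ at a spectral value $\lambda\in(0,1)$, every unitary $u\in A$ is written as a proper convex combination $\lambda\pi^{(0)}(u)+(1-\lambda)\pi^{(1)}(u)$ of elements of the unit ball; since unitaries are extreme points, this forces $\pi^{(0)}(u)=\pi^{(1)}(u)=u$ modulo $I_\lambda$, hence (spanning by unitaries and injectivity of the direct sum over $\lambda$, because $h$ is central) $\pi^{(0)}=\pi^{(1)}=\mathrm{id}$. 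This yields $\theta^{(0)}(a)=ha$, $\theta^{(1)}(a)=(1-h)a$ exactly, which is precisely the statement you are trying to force by iteration; your Step~1, identifying $C_n,D_n$ as quotients of cones via the order-zero structure theorem, is correct and is exactly how the paper finishes.
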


\begin{proof}
Using \cite[Theorem 3.2]{Winter:2010eb} (and that $A$ is separable) there exists a sequence $(\psi_n,\phi_n,F_n)$ where: 
\begin{enumerate}[(i)]
\item each $F_n$ is a finite-dimensional $C^*$-algebra that decomposes as a direct sum $F_n=F_n^{(0)}\oplus F_n^{(1)}$;
\item each $\psi_n$ is a ccp map $A\to F_n$ such that the induced diagonal map 
$$
\overline{\psi}:A\to F_\infty
$$ 
is order zero;
\item each $\phi_n$ is a map $F_n\to A$ such that the restriction $\phi_n^{(i)}$ of $\phi_n$ to $F_n^{(i)}$ is ccp and order zero;
\item for each $a\in A$, $\phi_n\psi_n(a)\to a$ as $n\to\infty$.
\end{enumerate}
Let $\overline{\phi}:F_\infty\to A_\infty$, and $\overline{\phi^{(i)}}:F^{(i)}_\infty\to A_\infty$ denote the induced maps, let $\kappa^{(i)}:F_\infty\to F_\infty^{(i)}$ denote the canonical quotient, and consider the composition 
$$
\theta^{(i)}:=\overline{\phi^{(i)}}\circ \kappa^{(i)}\circ \overline{\psi}:A\to A_\infty.
$$
Each $\theta^{(i)}$ is then ccp and order zero, and we have moreover that $\theta^{(0)}+\theta^{(1)}:A\to A_\infty$ agrees with the canonical diagonal inclusion.

Now, let $M_i:=M(C^*(\theta^{(i)}(A)))$ be the multiplier algebra of the $C^*$-subalgebra $C^*(\theta^{(i)}(A))$ of $A_\infty$ generated by $\theta^{(i)}(A)$.  Using \cite[Theorem 2.3]{Winter:1009aa} if we set $h_i:=\theta^{(i)}(1)$, then $h_i$ is a positive contraction in $C^*(\theta^{(i)}(A))\cap A'$, and there exists a unital\footnote{Unitality follows from the proof in the given reference, but does not appear explicitly in the statement.} $*$-homomorphism $\pi^{(i)}:A\to M_i\cap \{h_i\}'$ such that 
$$
\theta^{(i)}(a)=h_i\pi^{(i)}(a)
$$
for all $a\in A$.  As $1=\theta^{(0)}(1)+\theta^{(1)}(1)=h_1+h_2$, we will switch notation and write $h:=h_1$, so $1-h=h_2$, so for all $a\in A$,
\begin{equation}\label{split}
a=h\pi^{(0)}(a)+(1-h)\pi^{(1)}(a).
\end{equation}
Note in particular that $h$ commutes with both $\theta^{(1)}(A)$ (as $h=h_1$ and $h_1$ commutes with this collection), and with $\theta^{(2)}(A)$ (as $1-h=h_2$, and $h_2$ commutes with this collection). Hence $h$ commutes with $\theta^{(1)}(A)+\theta^{(2)}(A)\supseteq A$, so in particular $h$ is in $A_\infty\cap A'$.  

Now, let us think of $\pi^{(i)}:A\to M_i$ as having image in the double dual $(A_\infty)^{**}$ by postcomposing with the canonical embedding $M_i\to (A_\infty)^{**}$.  Let us replace $\pi^{(i)}$ with the map 
\begin{equation}\label{cut down}
a\mapsto \chi_{[0,1]\setminus \{i\}}(h)\pi^{(i)}(a)+\chi_{\{i\}}(h)a.
\end{equation} 
Then the equation in line \eqref{split} still holds for all $a\in A$.  Let $B$ be the unital $C^*$-algebra generated by $h$, $A$, $\pi^{(0)}(A)$ and $\pi^{(1)}(A)$, and note that $h$ is central in $B$.  For each $\lambda\in [0,1]$ in the spectrum of $h$ in $C^*(h,1)$, let $I_{\lambda}$ be the $C^*$-ideal in $B$ generated by the corresponding maximal ideal in $C^*(h,1)$ (with $I_\lambda=B$ if $\lambda$ is not in the spectrum of $h$).  Then in $B/I_\lambda$, the equation in line \eqref{split} descends to
$$
a=\lambda\pi^{(0)}(a)+(1-\lambda)\pi^{(1)}(a).
$$
If $\lambda\in (0,1)$ and $a=u\in A$ is unitary, this writes the image of $u$ in $B/I_\lambda$ as a convex combination of two elements in the unit ball; as unitaries are always extreme points in the unit ball of a $C^*$-algebra \cite[Theorem II.3.2.17]{Blackadar:2006eq}, this is impossible unless $\pi^{(0)}(u)=\pi^{(1)}(u)=u$ modulo $I_\lambda$ for all $\lambda\in (0,1)$.  As the unitaries span any unital $C^*$-algebra \cite[Proposition II.3.2.12]{Blackadar:2006eq}, this forces $\pi^{(0)}(a)=\pi^{(1)}(a)=a$ modulo $I_\lambda$ for all $a\in A$ and all $\lambda\in (0,1)$.  On the other hand, if $\lambda=0$, we clearly get $\pi^{(1)}(a)=a$ modulo $I_0$ for all $a\in A$, while $\pi^{(0)}(a)=a$ modulo $I_0$ follows from the replacement we made in line \eqref{cut down}.  Similarly, if $\lambda=0$, we also get that $\pi^{(0)}(a)=a$ and $\pi^{(1)}(a)=a$ modulo $I_1$.  Putting this together, we have that the postcomposition of either $\pi^{(0)}$ or $\pi^{(1)}$ with the natural diagonal $*$-homomorphism
$$
\Phi:B\to \prod_{\lambda\in \text{spectrum}(h)}B/I_\lambda
$$
agrees with the natural map $A\to  \prod_{\lambda\in [0,1]}B/I_\lambda$ induced by the inclusion $A\to B$.  However, as $C^*(h,1)$ is contained in the center of $B$, the map $\Phi$ is injective by \cite[Theorem 7.4.2]{Douglas:1972vn}.  Hence we get that both $\pi^{(0)}$ and $\pi^{(1)}$ agree with the diagonal inclusion $A\to A_\infty$, and thus have the equations
$$
\theta^{(0)}(a)=ha \quad\text{and}\quad \theta^{(1)}(a)=(1-h)a
$$
for all $a\in A$.

To complete the proof, therefore, we need to find sequences $(C_n)$ and $(D_n)$ of $C^*$-subalgebras of $A$ with the right properties.  For each $n$ and each $i\in \{0,1\}$, consider $\phi_n^{(i)}:F_n^{(i)}\to A$.  As this is order zero, \cite[Corollary 3.1]{Winter:1009aa} gives a $*$-homomorphism $\rho^{(i)}_n:C_0(0,1]\otimes F_n^{(i)}\to A$ such that $\phi_n^{(i)}(b)=\rho_n^{(i)}(x\otimes b)$ for all $b\in A$, where $x\in C_0(0,1]$ is the identity function.  Set $C_n:=\rho_n^{(0)}(C_0(0,1]\otimes F_n^{(0)})$ and $D_n=\rho_n^{(1)}(C_0(0,1]\otimes F_n^{(0)})$, which contain the images of $\phi_n^{(0)}$ and $\phi_n^{(1)}$ respectively.  It is straightforward to check that $(C_n)$ and $(D_n)$ have the right properties.
\end{proof}

The next corollary follows by lifting the element $h\in A_\infty$ to a positive contraction $(h_n)\in \prod_n A$: we leave the details to the reader.

\begin{corollary}\label{approx cor}
Let $A$ be a separable $C^*$-algebra of nuclear dimension one, and let $\mathcal{C}$ be the class of pairs $(C,D)$ of $C^*$-subalgebras of $A$ such that each of $C$ and $D$ is isomorphic to a quotient of a cone over a finite dimensional $C^*$-algebra.  Then $A$ has a weak approximate ideal structure over $\mathcal{C}$. \qed
\end{corollary}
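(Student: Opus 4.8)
The plan is to derive the corollary from Proposition~\ref{nd1} by ``de-sequencing'': the proposition produces a single central positive contraction $h$ in the sequence algebra $A_\infty$ together with sequences $(C_n),(D_n)$ of cone-quotient subalgebras such that $ha\in C_\infty$ and $(1-h)a\in D_\infty$ for every $a\in A$; a weak $\delta$-ideal structure for a given finite-dimensional $X\subseteq A$ is then obtained by choosing an index $n$ large enough that the approximations implicit in these statements hold to within $\delta$ on a basis of $X$. Since $\delta$ and $X$ are arbitrary, this yields exactly a weak approximate ideal structure over $\mathcal{C}$ in the sense of Definition~\ref{weak ais}.

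\textbf{Reduction to the unital case.} If $A$ is non-unital, pass to the minimal unitization $\widetilde A$, which has nuclear dimension one as well (since $\dim_{\mathrm{nuc}}(\widetilde A)=\dim_{\mathrm{nuc}}(A)$), and apply Proposition~\ref{nd1} there. One then has to observe that the data produced for $\widetilde A$ can be cut down to $A$: a positive contraction in the multiplier algebra of $\widetilde A$ multiplies $A$ into itself, and intersecting the resulting cone-quotient subalgebras with the ideal $A\subseteq\widetilde A$ leaves them (or rather their restrictions as needed for conditions (i)--(ii) of Definition~\ref{decomp}) of the permitted form. I expect this bookkeeping to be the only genuinely fiddly point; for clarity the rest of the proposal is written assuming $A$ is unital, which is the case the hint in the text addresses and where no real obstacle remains.

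\textbf{The lifting argument.} Assume $A$ unital and let $h\in A_\infty\cap A'$ and $(C_n),(D_n)$ be as in Proposition~\ref{nd1}. Since the quotient map $q\colon\prod_{\N}A\to A_\infty$ is surjective and sends (after the usual functional-calculus correction) positive contractions onto positive contractions, choose a positive contraction $(h_n)_n\in\prod_\N A$ with $q((h_n)_n)=h$; each $h_n$ lies in $M(A)=A$. Now fix $\delta>0$ and a finite-dimensional $X\subseteq A$, pick a linear basis $a_1,\dots,a_k$ of $X$ consisting of contractions, and set $M_X:=\sup\{\sum_j|\lambda_j|:\|\sum_j\lambda_j a_j\|\le 1\}<\infty$. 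Centrality of $h$ gives $\|[h_n,a_j]\|\to 0$ for each $j$; the inclusions $ha_j\in C_\infty$ and $(1-h)a_j\in D_\infty$ give, upon lifting these elements into $\prod_\N C_n$ and $\prod_\N D_n$ respectively, sequences $(c_{n,j})_n$ and $(d_{n,j})_n$ with $\|h_n a_j-c_{n,j}\|\to 0$ and $\|(1-h_n)a_j-d_{n,j}\|\to 0$ for each $j$. Choose $n$ with $\|[h_n,a_j]\|$, $\|h_na_j-c_{n,j}\|$, $\|(1-h_n)a_j-d_{n,j}\|$ all below $\delta/M_X$ for all $j$. Then for arbitrary $x=\sum_j\lambda_j a_j\in X$ the triangle inequality yields $\|[h_n,x]\|\le\delta\|x\|$, $d(h_nx,C_n)\le\delta\|x\|$ and $d((1-h_n)x,D_n)\le\delta\|x\|$, so $(h_n,C_n,D_n)$ satisfies conditions (i) and (ii) of Definition~\ref{decomp} for $X$, with $(C_n,D_n)\in\mathcal{C}$ by Proposition~\ref{nd1}. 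As $\delta$ and $X$ were arbitrary, $A$ has a weak approximate ideal structure over $\mathcal{C}$, which is the assertion. \qed

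\textbf{Main obstacle.} Modulo the non-unital reduction noted above, there is essentially no obstacle: once Proposition~\ref{nd1} is in hand the corollary is a direct unwinding of the definitions of $A_\infty$, $C_\infty$, $D_\infty$, of ``central'', and of a weak $\delta$-ideal structure, together with the standard fact that positive contractions lift along the quotient $\prod_\N A\to A_\infty$.
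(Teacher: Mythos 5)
Your proof is correct and follows exactly the route the paper indicates: the paper's entire proof of this corollary is the one-sentence remark preceding it, namely that one should lift $h\in A_\infty\cap A'$ to a positive contraction $(h_n)\in\prod_\N A$ and leave the details to the reader, and your ``lifting argument'' paragraph supplies precisely those details correctly (lift $h$ to a positive contraction by functional calculus, use that commutators $[h_n,a]$ and the distances $d(h_n a, C_n)$, $d((1-h_n)a, D_n)$ all tend to zero by centrality and the $C_\infty$/$D_\infty$ containments, then control these over a finite-dimensional $X$ via a basis of contractions and the equivalence-of-norms constant $M_X$). You also correctly flag a point the paper silently glosses over: Proposition~\ref{nd1} is stated only for \emph{unital} $A$ while Corollary~\ref{approx cor} claims the result for all separable $A$, so either the proposition needs to be extended to the non-unital case or one needs a genuine reduction; your unitization sketch is reasonable but, as you note, the cone-quotient subalgebras produced by running Proposition~\ref{nd1} on $\widetilde A$ live inside $\widetilde A$ rather than $A$, so some care (or a direct non-unital rerun of the proposition's argument) is indeed required there — this is a real loose end in the paper, not a flaw in your reasoning.
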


\begin{remark}
Based on the above it is natural to ask: if $A$ admits a weak approximate ideal structure over a class $\mathcal{C}$ as in Definition \ref{weak ais}, can one use an additional argument to show that $A$ admits an approximate ideal structure over $\mathcal{C}$? We do not believe this is true due to the following example\footnote{Inspired by a suggestion of Ian Putnam.}; we warn the reader that we did not check the details of what follows.  It seems by adapting Proposition \ref{nd1} that one can show that if $A$ has nuclear dimension one and real rank zero, then it has a weak approximate ideal structure over the class $\mathcal{C}$ of pairs of its finite dimensional $C^*$-subalgebras.  In particular, this would apply to any Kirchberg algebra (see \cite[Theorem G]{Bosa:2014zr} and \cite[Proposition 4.1.1]{Rordam:2002cs}).  However, if $A$ admits an approximate ideal structure over a class of pairs of finite-dimensional $C^*$-algebras, then a mild elaboration of Proposition \ref{v lem} below shows that $K_1(A)$ is torsion free.  As there are Kirchberg algebras with non-trivial torsion $K_1$ group (see \cite[Section 4.3]{Rordam:2002cs}), this (if correct!) would show that admitting a weak approximate ideal structure over $\mathcal{C}$ and admitting an approximate ideal structure over $\mathcal{C}$ are not the same.
\end{remark}

\section{Finite dynamical complexity}\label{fdc app}

In this appendix, we give examples of excisive decompositions coming from decompositions of groupoids as introduced in \cite{Guentner:2014bh}.  Our conventions on groupoids will be as in \cite[Appendix A]{Guentner:2014bh} and \cite[Section 2.3]{Renault:2009zr}.

The following is a slight variant of \cite[Definition A.4]{Guentner:2014bh}.

\begin{definition}\label{gpd decomp def}
Let $G$ be a locally compact, Hausdorff, \'{e}tale groupoid, let $H$ be an open subgroupoid of $G$, and let $\mathcal{C}$ be a set of open subgroupoids of $G$.  We say that $H$ is \emph{decomposable} over $\mathcal{C}$ if for any open, relatively compact subset $K$ of $H$ there exists an open cover $H^{(0)}=U_0\cup U_1$ of the unit space of $H$ such that for each $i\in \{0,1\}$ the subgroupoid of $H$ generated by 
$$
\{h\in K\mid s(h)\in U_i\}
$$
is contained in an element of $\mathcal{C}$. 
\end{definition}

The first technical result of this section is as follows.  See Definitions \ref{intro decomp} and \ref{intro exc} for terminology.

\begin{proposition}\label{gpd exc decomp}
Say $G$ is a second countable, locally compact, Hausdorff \'{e}tale groupoid that that decomposes over a set $\mathcal{D}$ of open subgroupoids of $G$.  Then the reduced groupoid $C^*$-algebra $C^*_r(G)$ admits an approximate ideal structure over the class of pairs
$$
\mathcal{C}:=\{(C^*_r(H_1),C^*_r(H_2))\mid H_1,H_2\in \mathcal{D}\}.
$$
Moreover, if every groupoid in $\mathcal{D}$ is clopen, then $C^*_r(G)$ admits a uniform approximate ideal structure over the class $\mathcal{C}$ above.
\end{proposition}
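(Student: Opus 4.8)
The plan is to verify the conditions of Definition \ref{decomp} directly on the dense $*$-subalgebra $C_c(G)$ of $C^*_r(G)$, and then to establish $f$-uniformity separately in the clopen case. First I would reduce to a finite subset $\mathcal{F}\subseteq C_c(G)$ and fix an open, relatively compact, symmetric subset $K$ of $G$ with $r(K)\cup s(K)\subseteq K$ that contains the support of every $a\in\mathcal{F}$. As $G$ is \'{e}tale and $\overline{K}$ is compact, $K$ is covered by finitely many, say $n$, open bisections, which gives the bookkeeping estimate $\|[b,a]\|_{C^*_r(G)}\le\|[b,a]\|_I\le n\|a\|_\infty\,\sup_{g\in K}|b(r(g))-b(s(g))|$ for $a\in\mathcal{F}$ and any positive contraction $b$ in $C_b(G^{(0)})\subseteq M(C^*_r(G))$. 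So the almost-commutation condition (i) of Definition \ref{decomp} will follow once I produce an $h$ varying by at most $O\big(\delta/(n\max_a\|a\|_\infty)\big)$ along each arrow of $K$.

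Next I would invoke decomposability, applied to the relatively compact open set $K_N:=K\cup K^2\cup\cdots\cup K^N$ for a large $N$ to be chosen at the end: Definition \ref{gpd decomp def} gives an open cover $G^{(0)}=V_0\cup V_1$ and subgroupoids $H_0,H_1\in\mathcal{D}$ with $\langle\{g\in K_N:s(g)\in V_i\}\rangle\subseteq H_i$; in particular $\{g\in K:s(g)\in V_i\}$, and since $K=K^{-1}$ also $\{g\in K:r(g)\in V_i\}$, are contained in $H_i$. The core of the argument is then to build a positive contraction $h\in C_b(G^{(0)})$ which is approximately $1$ on $G^{(0)}\setminus V_1$, approximately $0$ on $G^{(0)}\setminus V_0$, with $\{h>\frac1N\}\subseteq V_0$ and $\{h<1-\frac1N\}\subseteq V_1$, and which varies by $O(1/N)$ along each arrow of $K$; I would do this by spreading a Urysohn-type transition between the two complements over the $N$ available $K$-layers, a telescoping construction in the spirit of the partition-of-unity arguments underlying nuclear dimension and dynamic asymptotic dimension. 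Granting such an $h$, the rest is support bookkeeping: up to an error of norm $O(n\max_a\|a\|_\infty/N)$ coming from the region where $h\le\frac1N$, the element $ha$ is a compactly supported continuous function supported inside the open subgroupoid $H_0$ (it is supported in $\{g\in K:r(g)\in V_0\}$), hence lies close to $C^*_r(H_0)$; symmetrically for $(1-h)a$ and $C^*_r(H_1)$; and $(1-h)ha,(1-h)h^2a$ are, up to similarly small errors, supported in $\{g\in K:r(g)\in V_0\cap V_1\}\subseteq H_0\cap H_1$, hence lie near $C^*_r(H_0\cap H_1)\subseteq C^*_r(H_0)\cap C^*_r(H_1)=C\cap D$. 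Choosing $N$ large (depending on $\delta$, $n$, and $\max_a\|a\|_\infty$) then makes $(h,C^*_r(H_0),C^*_r(H_1))$ a $\delta$-ideal structure with $(C^*_r(H_0),C^*_r(H_1))\in\mathcal{C}$, proving the first assertion.

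The hard part is precisely the telescoping construction of $h$: one must arrange that the transition between $G^{(0)}\setminus V_0$ and $G^{(0)}\setminus V_1$ can genuinely be spread over $\Theta(N)$ distinct $K$-layers while keeping $\{h>\frac1N\}$ inside $V_0$ and $\{h<1-\frac1N\}$ inside $V_1$. This is why the decomposition must be applied at the inflated scale $K_N$ rather than at scale $K$, and it comes down to a careful point-set analysis of how the $K$-neighbourhoods of the complements $G^{(0)}\setminus V_i$ meet the cover $\{V_0,V_1\}$; essentially all the genuine content of the proposition sits here. The reduction to $C_c(G)$, the bisection and $I$-norm bookkeeping, and the support computations placing the various products into the right subalgebras are all routine.

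For the uniform statement, assume every element of $\mathcal{D}$ is clopen. For a clopen subgroupoid $H\le G$, restriction of functions $f\mapsto f|_H$ maps $C_c(G)$ into $C_c(H)$ (clopenness is used here to keep supports compact) and extends to a contractive conditional expectation $E_H:C^*_r(G)\to C^*_r(H)$; for clopen $H_0,H_1$ one has $E_{H_0\cap H_1}=E_{H_0}E_{H_1}$, whence $C^*_r(H_0)\cap C^*_r(H_1)=C^*_r(H_0\cap H_1)$. Since a completely positive contraction tensors to a contraction on minimal tensor products, $E_{H_1}\otimes\mathrm{id}_B:C^*_r(G)\otimes B\to C^*_r(H_1)\otimes B$ is contractive and carries $C^*_r(H_0)\otimes B$ into $C^*_r(H_0\cap H_1)\otimes B=(C\cap D)\otimes B$. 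Then, given a $C^*$-algebra $B$, $c\in C\otimes B$ and $d\in D\otimes B$ with $\|c-d\|<\delta$, the element $x:=(E_{H_1}\otimes\mathrm{id}_B)(c)$ lies in $(C\cap D)\otimes B$, and using $(E_{H_1}\otimes\mathrm{id}_B)(d)=d$ one gets $\|x-d\|=\|(E_{H_1}\otimes\mathrm{id}_B)(c-d)\|\le\delta$ and $\|x-c\|\le\|x-d\|+\|d-c\|<2\delta$. Hence every pair in $\mathcal{C}$ is $f$-uniform with $f(t)=2t$, so $C^*_r(G)$ admits a uniform approximate ideal structure over $\mathcal{C}$.
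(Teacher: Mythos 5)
Your second half — the $f$-uniformity argument via the conditional expectation $E_H:C^*_r(G)\to C^*_r(H)$ for a clopen $H$, extended across the minimal tensor product — is essentially the paper's own proof (Lemmas \ref{cond exp} and \ref{gpd exc}), just with slightly cleaner bookkeeping giving $f(t)=2t$ rather than $3t$; no issues there.

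For the first half there is a genuine gap, which you yourself flag: you never actually produce the positive contraction $h$. The paper does not construct $h$ from scratch either; in Lemma \ref{gpd decomp} it invokes \cite[Lemma A.12]{Guentner:2014bh}, which hands you, for any $\epsilon>0$, the cover $G^{(0)}=U_0\cup U_1$ \emph{together with} the partition of unity $\{\phi_0,\phi_1\}$ already satisfying $|\phi_i(s(k))-\phi_i(r(k))|<\epsilon$ for $k\in K$ and $\operatorname{supp}\phi_i\subseteq U_i$, so that $\phi_0 a$, $\phi_1 a$, and $\phi_0\phi_1 a$ land directly in $C_c$ of the right subgroupoids. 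Your proposal instead tries to rebuild that lemma: apply decomposability at an inflated scale $K_N$ and spread a Urysohn transition over $N$ layers of $K$. The difficulty you would need to address and do not is that $V_0\cup V_1 = G^{(0)}$ being a cover adapted to $K_N$ does not by itself give the nesting $r\big(K^{N-1}\cdot(G^{(0)}\setminus V_1)\big)\subseteq V_0$ that your telescoping construction requires; arranging this (and verifying the continuity/compact-support issues for the resulting $h$) is exactly the content of the cited lemma. You should either cite \cite[Lemma A.12]{Guentner:2014bh} as the paper does, or carry out the inflated-decomposition bookkeeping in full — as written, the sentence ``essentially all the genuine content of the proposition sits here'' marks the hole rather than filling it. One small further remark: once $h=\phi_0$ is supplied by that lemma you get $\phi_0 a,\ \phi_1 a,\ \phi_0\phi_1 a$ \emph{exactly} in $C_c(H_0)$, $C_c(H_1)$, $C_c(H_0\cap H_1)$ respectively (no $O(1/N)$ error terms), so your epsilon-management in condition (ii)--(iii) of Definition \ref{decomp} becomes unnecessary; the only $\delta$ that matters comes from the almost-commutation estimate.
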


The proof will proceed via some lemmas.  First we give the existence of approximate ideal structures.

\begin{lemma}\label{gpd decomp}
Say $G$ is a locally compact, Hausdorff \'{e}tale groupoid that decomposes over a set $\mathcal{D}$ of subgroupoids of $G$ in the sense of Definition \ref{gpd decomp def}.  Then the reduced groupoid $C^*$-algebra $C^*_r(G)$ admits an approximate ideal structure over the set $\mathcal{C}:=\{(C^*_r(H_1),C^*_r(H_2))\mid H_1,H_2\in \mathcal{D}\}$.
\end{lemma}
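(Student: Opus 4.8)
The plan is to unwind Definition \ref{decomp} (the quantitative notion of approximate ideal structure) directly from Definition \ref{gpd decomp def}. So fix $\delta>0$ and a finite-dimensional subspace $X$ of $C^*_r(G)$. Since $C_c(G)$ is dense in $C^*_r(G)$, I would first replace $X$ by a finite set $\{f_1,\dots,f_k\}\subseteq C_c(G)$ that $\delta/3$-approximates a basis of contractions of $X$; the supports of the $f_j$ sit inside a single open, relatively compact bisection-union $K\subseteq G$, which we may enlarge so that $KK$, $KKK$, $K^{-1}$, etc., are all relatively compact. Apply decomposability to this $K$ to get an open cover $G^{(0)}=U_0\cup U_1$ so that the subgroupoid $H_i$ generated by $\{g\in K : s(g)\in U_i\}$ lies in some $\bar H_i\in\mathcal D$. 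Then I would set $C:=C^*_r(\bar H_0)$, $D:=C^*_r(\bar H_1)$, viewed inside $C^*_r(G)$ via the canonical inclusion coming from the open inclusion of subgroupoids (so $C_c(\bar H_i)\subseteq C_c(G)$), and I need to build the positive contraction $h$ in the multiplier algebra $M(C^*_r(G))$.

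The key construction: choose a continuous partition of unity subordinate to $U_0,U_1$ on the unit space, i.e.\ positive contractions $\varphi_0,\varphi_1\in C_b(G^{(0)})$ with $\varphi_0^2+\varphi_1^2=1$ (squares chosen so that things land symmetrically) and $\mathrm{supp}(\varphi_i)\subseteq U_i$; since $G^{(0)}$ is open in $G$ and $C_0(G^{(0)})\subseteq M(C^*_r(G))$ acts by pointwise multiplication on functions on $G$, set $h:=\varphi_0^2\in M(C^*_r(G))$ (a positive contraction), so $1-h=\varphi_1^2$. Now verify the three conditions of Definition \ref{decomp} for the set $\{f_1,\dots,f_k\}$ — this suffices by density and the $\delta/3$-approximation. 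For \eqref{decom com prop}: $h$ is a function on units, so $[h,f](g)=(h(r(g))-h(s(g)))f(g)$; this is small in $L^1$-norm (hence in reduced norm, up to a universal constant) provided we arranged the cover fine enough, but actually we get it for \emph{free} once we cut by the right pieces — more carefully, one shows $[h,f]$ is supported where $r(g)\in U_i$ but $s(g)\notin U_i$, and by shrinking via a further subdivision of $K$ one forces $\|[h,f]\|<\delta\|f\|$. For \eqref{decom in prop}: $hf$ is supported on $\{g: s(g)\in U_0\}\cap K$ (up to small error from the commutator), and such functions are sums of functions supported on bisections whose source lies in $U_0$, hence generated by $K$-elements with source in $U_0$, hence in $C_c(\bar H_0)\subseteq C$; symmetrically $(1-h)f\in_{\delta\|f\|}D$. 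For \eqref{decom int in}: $h(1-h)f=\varphi_0^2\varphi_1^2 f$ and $h^2(1-h)f$ are supported on $\{g: s(g)\in U_0\cap U_1\}$ up to small error, and a function supported there (in $K$) lies in $C_c(\bar H_0)\cap C_c(\bar H_1)\subseteq C\cap D$.

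The main obstacle I anticipate is the norm control in the commutator estimate \eqref{decom com prop} and the "up to small error" claims: the reduced $C^*$-norm is not the $L^1$-norm, so one cannot simply bound supports and sup-norms. The standard fix (used throughout groupoid $C^*$-algebra arguments, e.g.\ in \cite{Guentner:2014bh}) is to write each $f_j$ as a finite sum of functions each supported on an open bisection, for which the reduced norm is comparable to the sup-norm, and to propagate the partition-of-unity estimates through this finite decomposition; the number of bisections needed depends only on $K$, giving a constant that is absorbed by shrinking $\delta$ at the outset. A secondary point to get right is that the partition of unity $\varphi_0,\varphi_1$ lies genuinely in $M(C^*_r(G))$ and that multiplication by it preserves the relevant supports — this is where second countability / paracompactness of $G^{(0)}$ is used to produce the $\varphi_i$, and where openness of the $U_i$ and of the subgroupoids $\bar H_i$ ensures the inclusions $C^*_r(\bar H_i)\hookrightarrow C^*_r(G)$ are the expected ones. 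Finally I would remark that the uniform-structure refinement (for clopen $\mathcal D$) is deferred to a later lemma, as stated in Proposition \ref{gpd exc decomp}.
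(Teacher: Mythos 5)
Your proposal follows essentially the same route as the paper's: choose a partition of unity $\{\phi_0,\phi_1\}$ on the unit space subordinate to the cover $\{U_0,U_1\}$ coming from decomposability, set $h$ to be (a function of) $\phi_0$ acting as a multiplier on $C^*_r(G)$, and verify Definition \ref{decomp} by tracking where $ha$, $(1-h)a$, $h(1-h)a$ are supported, with a bisection decomposition giving the norm control (the paper outsources these two steps to \cite[Lemma A.12]{Guentner:2014bh} and \cite[Lemma 8.20]{Guentner:2014aa}). You are in fact slightly more explicit than the paper about checking condition \eqref{decom int in}, which the paper's written proof does not discuss. One small slip to watch: left multiplication by $h\in C_b(G^{(0)})$ is $(hf)(g)=h(r(g))f(g)$, so $hf$ is supported where $r(g)\in U_0$, not $s(g)$ as you wrote; since Definition \ref{gpd decomp def} is phrased in terms of $s$, you need either to symmetrize $K$ at the outset (as you already anticipated) or to note, as the paper's proof tacitly does in its condition (ii), that the decomposability lemma can be run with $r$ in place of $s$.
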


\begin{proof}
Let $X$ be a finite-dimensional subspace of $C^*_r(G)$.  Up to an approximation, we may assume that there is an open relatively compact subset $K$ of $G$ such that every element of $X$ is an element of $C_c(G)$ supported in $K$.  Using (a slight variation on) \cite[Lemma A.12]{Guentner:2014bh}, for any $\epsilon>0$, there is an open cover $G^{(0)}=U_0\cup U_1$ of the base space of $G$ and a pair of continuous compactly supported functions $\{\phi_0,\phi_1:G^{(0)}\to [0,1]\}$ with the following properties.  
\begin{enumerate}[(i)]
\item \label{supp phi} each $\phi_i$ is supported in $U_i$;
\item \label{small gen} for each $i\in \{0,1\}$, the set $\{k\in K\mid r(k)\in U_i\}$ generates an open subgroupoid of $G$ that is contained in some element $H_i$ of $\mathcal{D}$;
\item \label{pou} for each $x\in G^{(0)}$, $\phi_0(x)+\phi_1(x)=1$ and for each $k\in K$, $\phi_0(r(k))+\phi_1(r(k))=1$;
\item \label{low var} for any $k\in K$ and $i\in \{0,1\}$, $|\phi_i(s(k))-\phi_i(r(k))|<\epsilon$.
\end{enumerate}
We claim that for any $\delta>0$, there exists $\epsilon$ suitably small such that if $\phi_0$ and $\phi_1$ are chosen as above, then $(h,C,D)=(\phi_0,C^*_r(H_1),C^*_r(H_2))$ is a $\delta$-approximate ideal structure.

Indeed, the fact that $\|[h,a]\leq \delta\|a\|$ for all $a\in X$ follows from condition \eqref{low var} above and \cite[Lemma 8.20]{Guentner:2014aa}.  We have moreover that for any $a\in X$, $ha=\phi_0a$, and this is supported in $\{k\in K\mid r(k)\in U_0\}$ by condition \eqref{supp phi}, whence is in $C^*_r(H_0)$ by condition \eqref{small gen}.  On the other hand, $(1-h)a=\phi_1a$ for any $a\in X$ by condition \eqref{pou}, whence $(1-h)a$ is in $C^*_r(H_1)$ by the same argument.  
\end{proof}

The next lemma is presumably well-known.

\begin{lemma}\label{cond exp}
Let $G$ be a locally compact, Hausdorff, \'{e}tale groupoid, and let $H\subseteq G$ be a clopen subgroupoid.  Then the restriction map $E:C_c(G)\to C_c(H)$ extends to a conditional expectation $E:C^*_r(G)\to C^*_r(H)$.
\end{lemma}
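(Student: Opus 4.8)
The plan is to realize $E$ as a compression of the regular representation. Recall that $C^*_r(G)$ acts faithfully, via left convolution $\pi_G$, on the Hilbert $C_0(G^{(0)})$-module $L^2(G):=\overline{C_c(G)}$ with inner product $\langle\xi,\eta\rangle=(\xi^**\eta)|_{G^{(0)}}$, and that $C^*_r(H)$ acts faithfully (via $\pi_H$) on $L^2(H)$. The first step is to observe that because $H$ is \emph{clopen} in $G$, restriction of functions $\xi\mapsto\xi|_H$ carries $C_c(G)$ into $C_c(H)$ (closedness of $H$ makes $\operatorname{supp}(\xi|_H)$ compact), while extension by zero carries $C_c(H)$ into $C_c(G)$ (openness of $H$ keeps the extension continuous). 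A direct inner-product computation, using that $H^{(0)}$ is clopen in $G^{(0)}$ so that $C_0(H^{(0)})\hookrightarrow C_0(G^{(0)})$, shows that extension by zero defines an isometry $V\colon L^2(H)\to L^2(G)$ of Hilbert $C_0(G^{(0)})$-modules with adjoint $V^*$ equal to restriction; in particular $V$ extends to the completions.

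The second step is the compression identity $V^*\pi_G(f)V=\pi_H(f|_H)$ in $\mathcal{L}(L^2(H))$ for $f\in C_c(G)$. This reduces to a convolution calculation whose only substantive point is that if $g,g_2\in H$ then $g g_2^{-1}\in H$ since $H$ is a subgroupoid, so only the values of $f$ on $H$ contribute to the relevant matrix coefficients. It follows that $E:=V^*\pi_G(\,\cdot\,)V$ is a completely positive contraction from $C^*_r(G)$ into $\mathcal{L}(L^2(H))$ whose restriction to $C_c(G)$ is $f\mapsto f|_H$; by density its range lies in $\pi_H(C^*_r(H))$, and since $\pi_H$ is isometric we may regard $E$ as a completely positive contraction $C^*_r(G)\to C^*_r(H)$ extending restriction. (Here we use the standard fact that for an open subgroupoid $H$ the inclusion $C^*_r(H)\hookrightarrow C^*_r(G)$ is isometric, so that the assertion makes sense; this also follows from the same compression identity applied to $f\in C_c(H)$ together with $\pi_G(f)$-invariance of the complement of $V(L^2(H))$.)

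The third step is to check that $E$ is a genuine conditional expectation, i.e. idempotent and $C^*_r(H)$-bimodular. Both hold on the dense $*$-subalgebra: $E(f)=f$ for $f\in C_c(H)$, and $E(bfc)=b\,E(f)\,c$ for $b,c\in C_c(H)$ and $f\in C_c(G)$, once more because $g,g_1,g_3\in H$ forces $g_1^{-1}g g_3^{-1}\in H$. These identities then pass to the norm closure using continuity of $E$ and joint continuity of multiplication, and we are done.

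I expect the only real obstacle to be bookkeeping rather than anything conceptual: keeping the convolution and adjoint conventions consistent, and being careful that the Hilbert $C_0(G^{(0)})$-module $L^2(H)$ (as opposed to the Hilbert $C_0(H^{(0)})$-module $L^2(H)$) still carries the regular representation of $C^*_r(H)$ faithfully — this is exactly where clopenness of $H^{(0)}$ in $G^{(0)}$ enters — so that the compression $V^*\pi_G(\,\cdot\,)V$ really does land in an isometric copy of $C^*_r(H)$.
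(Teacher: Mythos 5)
Your proof is correct, and the core idea is the same as the paper's: the extension-by-zero isometry (your $V$, the paper's $\xi\mapsto\tilde\xi$) realizes the restriction map as a compression of the regular representation, and the substantive point in both is that $h,k\in H$ forces $hk^{-1}\in H$. The packaging differs in two small ways. You work with the single Hilbert $C_0(G^{(0)})$-module $L^2(G)$ and the map $V\in\mathcal{L}(L^2(H),L^2(G))$, whereas the paper disintegrates into the fiber representations $\pi_x$ on $\ell^2(G_x)$ for $x\in H^{(0)}$ and estimates norms fiberwise; the two are equivalent, but your version makes complete positivity of $E=V^*\pi_G(\cdot)V$ immediate. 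The more substantive divergence is in how you finish: you verify idempotency and $C^*_r(H)$-bimodularity directly on $C_c(G)$, while the paper proves only contractivity and then invokes Tomiyama's theorem (a contractive idempotent projection onto a $C^*$-subalgebra is automatically a conditional expectation). Tomiyama lets you skip the bimodularity check entirely; your route avoids the black box at the cost of one more convolution computation, and it gives complete positivity as a free byproduct rather than as a consequence of Tomiyama. Both are fine; you might note that once you have the isometry $V$ and contractivity of $E$, citing Tomiyama would shorten your third step.
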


\begin{proof}
For $x\in H^{(0)}$, let $\pi_x:C^*_r(H)\to \mathcal{B}(\ell^2(H_x))$ be the associated regular representation defined by 
$$
(\pi_x(b)\xi)(h):=\sum_{k\in H_x} b(hk^{-1})\xi(h)
$$
as in \cite[Section 2.3.4]{Renault:2009zr}.  Let $\xi,\eta\in \ell^2(H_x)$, and consider 
$$
\langle \xi,\pi_x(E(a))\eta\rangle_{\ell^2(H_x)}=\sum_{h,k\in H_x}E(a)(hk^{-1})\eta(k)\overline{\xi(h)}=\sum_{h,k\in G_x}a(hk^{-1})\tilde{\eta}(k)\overline{\tilde{\xi}(h)}
$$
where $\tilde{\xi}\in \ell^2(G_x)$ is the function defined by extending $\xi$ by zero on $G_x\setminus H_x$, and the second equality uses that $H$ is a subgroupoid to deduce that if $h,k\in H$, then $hk^{-1}$ is in $H$.  Hence if $\pi_x^G$ is the corresponding representation of $G$ on $\ell^2(G_x)$, we have 
$$
\langle \xi,\pi_x(E(a))\eta\rangle_{\ell^2(H_x)}=\langle \tilde{\xi},\pi_x^G(a)\tilde{\eta}\rangle,
$$
and so 
$$
\|E(a)\|=\sup_{\|\xi\|=\|\eta\|=1}|\langle \xi,\pi_x(E(a))\eta\rangle_{\ell^2(H_x)}|=\sup_{\|\xi\|=\|\eta\|=1}|\langle \tilde{\xi},\pi_x^G(a)\tilde{\eta}\rangle|\leq \|a\|.
$$
Hence $E$ is contractive, and so in particular extends to an idempotent linear contraction $E:C^*_r(G)\to C^*_r(H)$.  This extended map is necessarily a contraction by a classical theorem of Tomiyama: see for example \cite[Theorem 1.5.10]{Brown:2008qy}.
\end{proof}

\begin{lemma}\label{gpd exc}
Say $G$ is a locally compact, Hausdorff, \'{e}tale groupoid.  Then the set of pairs of $C^*$-subalgebras of $C^*_r(G)$ of the form $(C^*_r(H_1),C^*_r(H_2))$ with $H_1,H_2\subseteq G$ both open subgroupoids, and at least one of them also closed, is strongly excisive as in Definition \ref{intro exc}.
\end{lemma}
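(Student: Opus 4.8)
The plan is to reduce to the case where $H_2$ is clopen and then exploit the conditional expectation of Lemma~\ref{cond exp}. The roles of $H_1$ and $H_2$ (and of $c$ and $d$ in the uniformity condition of Definition~\ref{intro exc}) are symmetric, so I may assume $H_2$ is closed, hence clopen. Write $C:=C^*_r(H_1)$ and $D:=C^*_r(H_2)$, and identify $C^*_r(H)$ with its canonical image in $C^*_r(G)$ for every open subgroupoid $H\subseteq G$; these identifications are mutually compatible. Note that $H_1\cap H_2$ is again an open subgroupoid of $G$, and is \emph{clopen} when regarded as a subgroupoid of $H_1$ (open there since $H_2$ is open in $G$, and closed there since $H_2$ is closed in $G$); in particular $C^*_r(H_1\cap H_2)\subseteq C\cap D$.

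First I would apply Lemma~\ref{cond exp} to the clopen subgroupoid $H_2\subseteq G$ to get a (contractive, completely positive) conditional expectation $\Phi\colon C^*_r(G)\to C^*_r(H_2)$ which on $C_c(G)$ is restriction of functions to $H_2$. Since a function in $C_c(H_1)$ is carried by $\Phi$ to its restriction to $H_1\cap H_2$, a density argument shows $\Phi(C)\subseteq C^*_r(H_1\cap H_2)$; and because $\Phi$ fixes $D\supseteq C^*_r(H_1\cap H_2)$ pointwise, $\Phi$ restricts to a conditional expectation of $C$ onto $C^*_r(H_1\cap H_2)$. For an arbitrary $C^*$-algebra $B$, the map $\Phi\otimes\mathrm{id}_B\colon C^*_r(G)\otimes B\to D\otimes B$ is again a conditional expectation, well defined and contractive on the minimal tensor product even for non-nuclear $B$ (completely positive contractive maps survive tensoring by $\mathrm{id}_B$); it fixes $D\otimes B$ pointwise and maps $C\otimes B$ into $C^*_r(H_1\cap H_2)\otimes B$.

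The estimate is then immediate. Given $c\in C\otimes B$ and $d\in D\otimes B$ with $\|c-d\|\le\delta$, set $x:=(\Phi\otimes\mathrm{id}_B)(c)$. Then $x\in C^*_r(H_1\cap H_2)\otimes B\subseteq (C\cap D)\otimes B$; since $(\Phi\otimes\mathrm{id}_B)(d)=d$ we get $\|x-d\|=\|(\Phi\otimes\mathrm{id}_B)(c-d)\|\le\|c-d\|\le\delta$, and hence $\|x-c\|\le\|x-d\|+\|d-c\|\le 2\delta$. Thus every pair in the set under consideration is $f$-uniform in the sense of Definition~\ref{exc decomp} for the single decay function $f(t)=3t$ (when it is $H_1$ rather than $H_2$ that is closed, apply the conditional expectation onto $C^*_r(H_1)$ to $d$ instead), which is exactly the strong excision property required by Definition~\ref{intro exc}.

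I do not expect a serious obstacle: Lemma~\ref{cond exp} does the substantive work, and the rest is bookkeeping. The points that need a little care are verifying that the canonical inclusions $C^*_r(H_1\cap H_2)\hookrightarrow C^*_r(H_i)\hookrightarrow C^*_r(G)$ are consistent (standard for open subgroupoids of an \'etale groupoid), checking that $\Phi$ genuinely lands in $C^*_r(H_1\cap H_2)$ on $C$ and not merely in $D$, and confirming that $\Phi\otimes\mathrm{id}_B$ remains contractive for every $B$.
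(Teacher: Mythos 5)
Your proof is correct and takes essentially the same route as the paper: reduce to the case that $H_2$ is clopen, apply the conditional expectation of Lemma \ref{cond exp}, observe that it carries $C^*_r(H_1)$ into $C^*_r(H_1\cap H_2)\subseteq C\cap D$, tensor with $\mathrm{id}_B$, and read off the norm estimates from contractivity. The only difference is that you apply $E\otimes\mathrm{id}_B$ to $c$ and get $x$ within $\|c-d\|$ of $d$ and within $2\|c-d\|$ of $c$; the paper's text writes $(E\otimes\mathrm{id})(d)$ in a couple of places, which appears to be a typo (it should be $c$, exactly as you have it), so your version is actually the cleaner statement of the same argument.
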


\begin{proof}
Say $B$ is an arbitrary $C^*$-algebra, and consider $c\in C^*_r(H_1)\otimes B$ and $d\in C^*_r(H_2)\otimes B$.  Say without loss of generality that $H_2$ is closed, and let $E:C^*_r(G)\to C^*_r(H_2)$ be the conditional expectation of Lemma \ref{cond exp}.  As $E$ is just defined on $C_c(G)$ by restriction of functions, it follows that $E$ takes $C^*_r(H_1)$ into itself, and therefore into $C^*_r(H_1)\cap C^*_r(H_2)$.  Hence by functoriality of tensor product maps, we see that $E\otimes \text{id}$ restricted to $C^*_r(H_1)\otimes B$ is a map
$$
E\otimes \text{id}:C^*_r(H_1)\otimes B\to (C^*_r(H_1)\cap C^*_r(H_2))\otimes B
$$
and in particular $(E\otimes \text{id})(d)$ is in $(C^*_r(H_1)\cap C^*_r(H_2))\otimes B$.  On the other hand, as $E\otimes \text{id}$ is contractive (see for example \cite[Theorem 3.5.3]{Brown:2008qy}) and takes $C^*_r(H_1)$ to itself, so we get that 
$$
\|c-(E\otimes \text{id})(d)\|=\|(E\otimes \text{id})(c-d)\|\leq \|c-d\|
$$
and 
$$
\|d-(E\otimes \text{id})(d)\|\leq \|c-d\|+\|c-(E\otimes \text{id})(d)\|\leq 2\|c-d\|
$$
so we are done.
\end{proof}

Proposition \ref{gpd exc decomp} now follows directly from Lemmas \ref{gpd decomp}, \ref{cond exp}, and \ref{gpd exc}.

We spend the rest of this appendix deriving some consequences of Proposition \ref{gpd exc decomp}.

\begin{corollary}\label{gpd decom cor}
Say $G$ is an ample second countable, locally compact, Hausdorff \'{e}tale groupoid.  Let $\mathcal{K}$ be the class of clopen subgroupoids of $G$, such that for any $H\in \mathcal{K}$, and any clopen subgroupoid $K$ of $\mathcal{H}$, $C^*_r(K)$ satisfies the K\"{u}nneth formula.  Then $\mathcal{K}$ is closed under decomposability.
\end{corollary}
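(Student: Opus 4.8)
The plan is the following. Recall that ``$\mathcal{K}$ is closed under decomposability'' should mean: whenever $H$ is a clopen subgroupoid of $G$ that decomposes over some set $\mathcal{D}$ of subgroupoids of $G$ with $\mathcal{D}\subseteq\mathcal{K}$, then $H\in\mathcal{K}$ (compare the closure--under--decomposability discussion in \cite[Appendix A]{Guentner:2014bh}). So I would fix such $H$ and $\mathcal{D}$ --- noting that every element of $\mathcal{D}$ is in particular a clopen subgroupoid of $G$ --- and, since membership of $H$ in $\mathcal{K}$ asserts that $C^*_r(K)$ satisfies the K\"{u}nneth formula for \emph{every} clopen subgroupoid $K$ of $H$, I would fix an arbitrary clopen subgroupoid $K$ of $H$ and aim to apply Theorem \ref{main} to $C^*_r(K)$. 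Observe first that $K$ is automatically a second countable, locally compact, Hausdorff, \'{e}tale (indeed ample) clopen subgroupoid of $G$, so the machinery of this appendix applies to it.

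First I would show that $K$ decomposes over the set $\mathcal{D}_K:=\{D\cap K\mid D\in\mathcal{D}\}$. Given an open relatively compact subset $L$ of $K$, it is also open and relatively compact inside $H$ (using that $K$ is clopen in $H$, so closures of subsets of $K$ agree whether taken in $K$ or in $H$). Decomposability of $H$ over $\mathcal{D}$ then yields an open cover $H^{(0)}=U_0\cup U_1$ and $D_0,D_1\in\mathcal{D}$ with the subgroupoid of $H$ generated by $\{h\in L\mid s(h)\in U_i\}$ contained in $D_i$; intersecting with $K^{(0)}$ and using that $s(h)\in K^{(0)}$ for all $h\in L$, the cover $K^{(0)}=(U_0\cap K^{(0)})\cup(U_1\cap K^{(0)})$ witnesses decomposability of $K$ over $\mathcal{D}_K$. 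I would then note that each $D\cap K$ is clopen in $G$ and is a clopen subgroupoid of $D$; since $D\in\mathcal{K}$ and every clopen subgroupoid of $D\cap K$ is also a clopen subgroupoid of $D$, in fact $D\cap K\in\mathcal{K}$, so $\mathcal{D}_K\subseteq\mathcal{K}$ and every element of $\mathcal{D}_K$ is clopen. By Proposition \ref{gpd exc decomp} (the ``moreover'' clause), $C^*_r(K)$ therefore admits a uniform approximate ideal structure over $\mathcal{C}_K:=\{(C^*_r(L_1),C^*_r(L_2))\mid L_1,L_2\in\mathcal{D}_K\}$.

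To finish, I would check the hypotheses of Theorem \ref{main}: for each $(C^*_r(L_1),C^*_r(L_2))\in\mathcal{C}_K$, the three algebras $C^*_r(L_1)$, $C^*_r(L_2)$, and $C^*_r(L_1)\cap C^*_r(L_2)$ satisfy the K\"{u}nneth formula. The first two are immediate, since $L_1,L_2\in\mathcal{D}_K\subseteq\mathcal{K}$ and any groupoid in $\mathcal{K}$ is a clopen subgroupoid of itself. For the intersection, I would argue as in the proof of Lemma \ref{gpd exc}: using that $L_2$ is clopen, Lemma \ref{cond exp} provides a conditional expectation $E:C^*_r(K)\to C^*_r(L_2)$ given on $C_c(K)$ by restriction of functions, which carries $C_c(L_1)$ into $C_c(L_1\cap L_2)$ and hence, by continuity, $C^*_r(L_1)$ into $C^*_r(L_1\cap L_2)$; consequently any $a\in C^*_r(L_1)\cap C^*_r(L_2)$ satisfies $a=E(a)\in C^*_r(L_1\cap L_2)$, and since the reverse inclusion is clear this gives $C^*_r(L_1)\cap C^*_r(L_2)=C^*_r(L_1\cap L_2)$. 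As $L_1\cap L_2$ is a clopen subgroupoid of the element $D\in\mathcal{D}\subseteq\mathcal{K}$ containing $L_1$, its reduced $C^*$-algebra satisfies the K\"{u}nneth formula. Theorem \ref{main} then yields that $C^*_r(K)$ satisfies the K\"{u}nneth formula, and since $K$ was arbitrary this shows $H\in\mathcal{K}$.

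I expect the main obstacle to be the intersection step: verifying that $C^*_r(L_1)\cap C^*_r(L_2)=C^*_r(L_1\cap L_2)$ and that this intersection is controlled, i.e.\ again has reduced $C^*$-algebra satisfying the K\"{u}nneth formula. This is precisely the point where one needs the \emph{uniform} approximate ideal structure (so as to invoke Theorem \ref{main} rather than merely Theorem \ref{pre main}), and where clopenness of the subgroupoids is genuinely used via Lemma \ref{cond exp}. Everything else is bookkeeping about clopen subgroupoids together with the restriction of a decomposition to a clopen subgroupoid; I would only take minor additional care that the open relatively compact sets and covers transfer correctly between $H$ and $K$, which is routine given that $K$ is clopen.
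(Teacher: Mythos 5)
Your proof is correct and follows essentially the same route as the paper's: reduce to a clopen subgroupoid $K$ of $H$, observe that $K$ inherits a decomposition from $H$, invoke Proposition~\ref{gpd exc decomp} (the ample/clopen clause) to get a uniform approximate ideal structure, and then apply Theorem~\ref{main}. The paper's own proof is considerably more terse: it simply says ``$C^*_r(H)$ [...] satisfies K\"{u}nneth by Theorem~\ref{main},'' and then remarks in a single sentence that the same argument applies to any clopen subgroupoid of $H$.

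What you add --- and what the paper leaves implicit --- is the verification of the hypotheses of Theorem~\ref{main}, especially the intersection condition: for each pair $(C^*_r(L_1),C^*_r(L_2))$ one must check that $C^*_r(L_1)\cap C^*_r(L_2)$ satisfies the K\"{u}nneth formula. Your argument that $C^*_r(L_1)\cap C^*_r(L_2)=C^*_r(L_1\cap L_2)$, via the conditional expectation of Lemma~\ref{cond exp}, is exactly right (it is the same mechanism used in Lemma~\ref{gpd exc}, though that lemma never states the identity explicitly), and then $L_1\cap L_2$ being a clopen subgroupoid of an element of $\mathcal{K}$ finishes it, using the ``all clopen subgroupoids satisfy K\"{u}nneth'' clause built into the definition of $\mathcal{K}$. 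Your observation that $K$ decomposes over $\mathcal{D}_K=\{D\cap K\mid D\in\mathcal{D}\}$ and that $\mathcal{D}_K\subseteq\mathcal{K}$ is also a clean way to make precise the paper's gesture towards \cite[Lemma 3.16]{Guentner:2014bh}. In short: same approach, but you have filled in genuine details the paper asks the reader to supply.
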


\begin{proof}
Say $H$ is a clopen subgroupoid of $G$ that decomposes over $\mathcal{K}$.  Then $C^*_r(H)$ strongly excisively decomposes over the class $\{(C^*_r(K_1),C^*_r(K_2))\mid K_1,K_2\in \mathcal{K}\}$ by Proposition \ref{gpd exc decomp}, and so $C^*_r(H)$ satisfies K\"{u}nneth by Theorem \ref{main}.  The same argument also applies to any clopen subgroupoid of $H$: indeed, any clopen subgroupoid of $H$ is easily seen to also decompose over $\mathcal{K}$ (compare the proof of \cite[Lemma 3.16]{Guentner:2014bh}).
\end{proof}

We will finish with an example that is closely related to the notion of finite dynamical complexity for groupoids introduced in \cite[Definition A.4]{Guentner:2014bh}

\begin{definition}\label{fdc clopen}
Say $G$ is an ample, locally compact, Hausdorff \'{e}tale groupoid with finite dynamical complexity.  Let $\mathcal{C}$ be the class of compact open subgroupoids of $G$, and let $\mathcal{D}$ be the smallest class of clopen subgroupoids of $G$ containing $\mathcal{C}$ and closed under decomposability.   Then $G$ has \emph{strong finite dynamical complexity} if $G$ itself is contained in $\mathcal{D}$.
\end{definition}

\begin{theorem}\label{fdc cor}
Say $G$ is a principal, locally compact, Hausdorff \'{e}tale groupoid with strong finite dynamical complexity.  Then $C^*_r(G)$ satisfies the K\"{u}nneth formula.
\end{theorem}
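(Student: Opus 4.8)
The plan is to bootstrap from Corollary~\ref{gpd decom cor}, using the reduced $C^*$-algebras of compact principal étale groupoids as the base case. Recall that, as a groupoid with strong finite dynamical complexity, $G$ is in particular ample, and that --- as throughout this appendix --- we take $G$ to be second countable. Let $\mathcal{K}$ denote the class of clopen subgroupoids $H$ of $G$ such that $C^*_r(K)$ satisfies the K\"{u}nneth formula for \emph{every} clopen subgroupoid $K$ of $H$; this is exactly the class appearing in Corollary~\ref{gpd decom cor}, which therefore tells us that $\mathcal{K}$ is closed under decomposability. The goal is to show $G\in\mathcal{K}$, since applying the defining property of $\mathcal{K}$ to the clopen subgroupoid $G$ of itself then yields the theorem.

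The main point is that $\mathcal{K}$ contains the class $\mathcal{C}$ of compact open subgroupoids of $G$. So fix $H\in\mathcal{C}$ and a clopen subgroupoid $K$ of $H$. Being closed in the compact space $H$, $K$ is compact; being open in $H$, it is open in $G$, hence étale; and being a subgroupoid of the principal groupoid $G$, it is principal. Since $K$ is étale its source fibres are discrete, and since $K$ is compact these fibres are closed, hence finite; as $K$ is principal it follows that $K$ has only finite orbits, and a short argument using compactness of $K$ shows that $x\mapsto |K_x|$ is locally constant on $K^{(0)}$. Consequently $K^{(0)}$ decomposes into finitely many clopen $K$-invariant pieces, on each of which every orbit has a fixed finite size, and correspondingly $C^*_r(K)$ is a finite direct sum of homogeneous $C^*$-algebras; in particular it is a separable type I $C^*$-algebra. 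A separable type I $C^*$-algebra lies in the UCT bootstrap class, and hence satisfies the K\"{u}nneth formula. Therefore $H\in\mathcal{K}$, so $\mathcal{C}\subseteq\mathcal{K}$.

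To finish, recall that $\mathcal{D}$ is by definition the smallest class of clopen subgroupoids of $G$ that contains $\mathcal{C}$ and is closed under decomposability. Since $\mathcal{K}$ is such a class, minimality of $\mathcal{D}$ forces $\mathcal{D}\subseteq\mathcal{K}$. As $G$ has strong finite dynamical complexity, $G\in\mathcal{D}\subseteq\mathcal{K}$, and applying the definition of $\mathcal{K}$ to the clopen subgroupoid $G$ of itself gives that $C^*_r(G)$ satisfies the K\"{u}nneth formula, as required.

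I expect the only real content to be the base case $\mathcal{C}\subseteq\mathcal{K}$ --- specifically, the structural observation that the reduced $C^*$-algebra of a compact principal étale groupoid is type I (equivalently, here, has only finite-dimensional irreducible representations); everything afterwards is a formal consequence of Corollary~\ref{gpd decom cor} together with the minimality built into the definition of $\mathcal{D}$.
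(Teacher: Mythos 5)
Your overall bootstrap structure---establish $\mathcal{C}\subseteq\mathcal{K}$, deduce $\mathcal{D}\subseteq\mathcal{K}$ from minimality of $\mathcal{D}$, then invoke $G\in\mathcal{D}$ and read off the conclusion from the definition of $\mathcal{K}$---is exactly the paper's argument. The only place you diverge is the base case $\mathcal{C}\subseteq\mathcal{K}$, and there your route is genuinely different. The paper observes that a compact open principal \'{e}tale groupoid is principal and proper, invokes the Muhly--Renault--Williams equivalence theorem to get a Morita equivalence between $C^*_r(K)$ and $C(K^{(0)}/K)$, and then appeals to the commutative case of the K\"{u}nneth formula. You instead argue directly from the groupoid structure: the orbit-size function $x\mapsto|K_x|$ is bounded (cover compact $K$ by finitely many bisections) and locally constant (the compactness argument you gesture at is correct: given $|K_{x_0}|=n$, pick disjoint bisections $V_1,\dots,V_n$ through the points of $K_{x_0}$, note $s(K\setminus\bigcup V_i)$ is closed and misses $x_0$, and intersect its complement with $\bigcap s(V_i)$), so $K^{(0)}$ decomposes into finitely many clopen invariant pieces on each of which $C^*_r(K)$ restricts to a homogeneous algebra; hence $C^*_r(K)$ is separable type~I, lies in the UCT bootstrap class, and satisfies K\"{u}nneth. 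Both arguments are correct and land in comparable territory: the paper's route is shorter and reduces to Atiyah's commutative K\"{u}nneth theorem, while yours avoids importing groupoid Morita equivalence but swaps in the Rosenberg--Schochet fact that type~I algebras satisfy the UCT, which is a similarly heavy input. Your version also spells out why $\mathcal{C}$ is closed under passing to clopen subgroupoids, which the paper merely asserts. All steps check out.
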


This result is not new: groupoids as in the statement are amenable by \cite[Theorem A.9]{Guentner:2014bh}, and therefore their $C^*$-algebras satisfy the UCT by a result of Tu \cite[Proposition 10.7]{Tu:1999bq} (at least in the second countable case).  Nonetheless, it seems interesting to give a relatively direct proof based on the internal structure of the $C^*$-algebra.

\begin{proof}
Let $\mathcal{K}$ be as in Corollary \ref{gpd decom cor}, and let $\mathcal{C}$ be the class of compact open subgroupoids of $\mathcal{C}$.  Then for any $H\in \mathcal{K}$, the reduced $C^*$-algebra $C^*_r(H)$ is principal and proper, so Morita equivalent to the continuous functions $C(H^{(0)}/H)$ on the orbit space by \cite[Example 2.5 and Theorem 2.8]{Muhly:1987fk} (the second countability assumptions in that paper are not necessary in the \'{e}tale case \cite{Felix:2014aa}).  Hence $C^*_r(H)$ satisfies the K\"{u}nneth formula.  As $\mathcal{C}$ is closed under taking clopen subgroupoids, $\mathcal{K}$ contains $\mathcal{C}$.

Hence if $\mathcal{D}$ is as in Definition \ref{fdc clopen}, then $\mathcal{K}$ contains $\mathcal{D}$ by Corollary \ref{gpd decom cor}.  However, strong finite dynamical complexity implies that $G$ itself is in $\mathcal{D}$, so we are done.
\end{proof}

\begin{example}
Let $X$ be a bounded geometry metric space, and assume that $X$ has finite decomposition complexity as introduced in \cite{Guentner:2009tg} and studied in \cite{Guentner:2013aa}.  Then the associated coarse groupoid $G(X)$ has strong finite dynamical complexity by the proof of \cite[Theorem A.4]{Guentner:2014bh}.  Hence the associated groupoid $C^*$-algebra $C^*_r(G(X))$, which canonically identifies with the uniform Roe algebra $C^*_u(X)$, satisfies the K\"{u}nneth formula by Theorem \ref{fdc cor}.
\end{example}

\bibliography{Generalbib}

\end{document}